\renewcommand{\qed}{\hfill $\square$}
\newcommand{\bmh}[1]{\hat{\bm{#1}}}
\newcommand{\bmt}[1]{\tilde{\bm{#1}}}
\newcommand{\ep}[1]{\mathbb{E}_{\mathbb{P}}\left[#1\right]}
\newcommand{\eq}[1]{\mathbb{E}_{\mathbb{Q}}\left[#1\right]}
\newcommand{\epstar}[1]{\mathbb{E}_{\mathbb{P}^{\star}}\left[#1\right]}
\newcommand{\epdag}[1]{\mathbb{E}_{\mathbb{P}^\dag}\left[#1\right]}
\newcommand{\pp}[1]{\mathbb{P}\left[#1\right]}
\newcommand{\pphat}[1]{\hat{\mathbb{P}}\left[#1\right]}
\newcommand{\cx}{\mathcal{X}}
\newcommand{\cz}{\mathcal{Z}}
\newcommand{\cf}{\mathcal{F}}
\newcommand{\cq}{\mathcal{Q}}
\newcommand{\cp}{\mathcal{P}}
\newcommand{\ck}{\mathcal{K}}
\newcommand{\bbq}{\mathbb{Q}}
\newcommand{\bbp}{\mathbb{P}}
\newcommand{\bbr}{\mathbb{R}}
\newcommand{\bbphat}{\hat{\mathbb{P}}}
\newcommand{\ephat}[1]{\mathbb{E}_{\hat{\mathbb{P}}}\left[#1\right]}
\newcounter{nappendix}
\begin{document}
\RUNTITLE{Robust Conic Satisficing}
\TITLE{Robust Conic Satisficing}

\ARTICLEAUTHORS{%
	\AUTHOR{Arjun Ramachandra}
	\AFF{Engineering Systems \& Design, Singapore University of Technology and Design\\
	\EMAIL{\href{mailto:arjun.ramachandra@gmail.com}{arjun.ramachandra@gmail.com}}}
	\AUTHOR{Napat Rujeerapaiboon}
	\AFF{Department of Industrial Systems Engineering \& Management, National University of Singapore \\ \EMAIL{\href{mailto:napat.rujeerapaiboon@nus.edu.sg}{napat.rujeerapaiboon@nus.edu.sg}}}
	\AUTHOR{Melvyn Sim}
	\AFF{Department of Analytics \& Operations, National University of Singapore\\ \EMAIL{\href{mailto:melvynsim@gmail.com}{melvynsim@gmail.com}}}
}

\ABSTRACT{
\indent In practical optimization problems, we typically model uncertainty as a random variable though its true probability distribution is unobservable to the decision maker. Historical data provides some information of this distribution that we can use to approximately quantify the risk of an evaluation function that depends on both our decision and the uncertainty. This empirical optimization approach is vulnerable to the issues of overfitting, which could be overcome by several data-driven robust optimization techniques. To tackle overfitting, \cite{long2022robust} propose a {\em robust satisficing} model, which is specified by a performance target and a penalty function that measures the deviation of the uncertainty from its nominal value, and it yields solutions with superior out-of-sample performance. We generalize the robust satisficing framework to conic optimization problems with recourse, which has broader applications in predictive and prescriptive analytics including risk management, statistical supervised learning, among others. We derive an exact semidefinite optimization formulation for biconvex quadratic evaluation function, with quadratic penalty and ellipsoidal support set. More importantly, under complete and bounded recourse, and reasonably chosen polyhedral support set and penalty function, we propose safe approximations that do not lead to infeasible problems for any reasonably chosen target. We demonstrate that the assumption of complete and bounded recourse is however not unimpeachable, and then introduce a novel {\em perspective casting} technique to derive an equivalent conic optimization problem that satisfies the stated assumptions.  
Finally, we showcase a computational study on data-driven portfolio optimization that maximizes the expected exponential utility of the portfolio returns. We demonstrate that the solutions obtained via robust satisficing can significantly improve over the solutions obtained by stochastic optimization models, including the celebrated Markowitz model, which is prone to overfitting. 
}
\KEYWORDS{robust optimization, robust satisficing, conic optimization, affine recourse adaptation}

\maketitle


\section{Introduction}\label{sec:introduction}
\vspace{2pt}
\begin{center}
\centering ``Of the impermanent there is no certainty'' - Gita (2.16)
 \end{center}
\vspace{5pt}

Data is an integral part of predictive and prescriptive analytics that we would use for evaluating approximately the potential risks of decisions that might arise in future.  Although historical data provides limited information of the model's underlying  uncertainty, the true probability distribution of the random variable remains unobservable to the decision maker. In stochastic optimization, we evaluate the model's risk-based objective function using an explicit probability distribution with parameters estimated from historical data \citep{shapiro2014lectures,birge2011introduction}. In particular, an empirical optimization problem is a stochastic optimization problem when we use the empirical distribution for risk evaluation. However, as articulated in  \cite{smith2006optimizercurse} {\em Optimizer's curse}, stochastic optimization models are prone to overfitting and would likely yield inferior solutions when the risks are evaluated on the true distributions. This is exemplified in the study of \cite{DeMiguelGarlappiUppal2009} that demonstrates the inferiority of optimized portfolios when evaluated on the actual data.    

To tackle overfitting, \cite{esfahani2018data} propose the seminal framework for data-driven robust optimization, by incorporating an ambiguity set of probability distributions that are proximal to the empirical distribution with respect to the Wasserstein metric. The approach alleviates the optimizer's curse by avoiding overfitting to the empirical distribution. When appropriately sized, the ambiguity set captures the true distribution with high level of confidence, and the robust solutions could yield superior out-of-sample performance compared to solutions obtained from empirical optimization models.  \cite{shafieezadeh2019regularization} extend the models to solving supervised learning problems and establish the relation with regularization for Lipschitz continuous loss functions and unbounded support for the underlying random variable. As in the case of regularization, the size of the ambiguity set is a hyper-parameter that can be determined by cross-validation techniques. 

\cite{long2022robust} recently proposed an alternative approach known as robust satisficing for improving the solutions of data-driven optimization problems . For one data sample, this model corresponds to a case of the GRC-sum model of  \cite{Ben-talEtAl2017}. Instead of sizing the ambiguity set, the satisficing model delivers solutions that aim to achieve the specified risk-based target to a feasible extent, notwithstanding the potential mismatch between the true distribution and the empirical distribution. \cite{sim2021tractable} extend the robust satisficing approach to solving supervised learning problems and show in studies on well-known data sets that the robust satisficing approach would generally outperform the robust optimization approach, when the respective target and size parameters are determined via cross-validation. In this paper, we generalize the robust satisficing framework to conic optimization problems with recourse, which has broader applications in predictive and prescriptive analytics including, {\em inter alia}, risk management and statistical supervised learning.



As in the case of robust optimization models, it can be computationally challenging to solve robust satisficing models exactly. However, since the inception of robust optimization, we now have an arsenal of tools to address and solve either exactly, or by providing tractable safe approximations for various kinds of robust optimization models \citep[see, for instance, ][among others]{Bental1998,Bertsimas_Sim_2004,bental2004adjustable,kuhn2011primal_ldr,bertsimas2010optimality_ldr,delage2010distributionally,wiesemann2014distributionally,bental2015nonlinear_concave,bertsimas2019adaptive,chen2020rsome}.  We wish to highlight 
\cite{Bertsimas2016} for proposing the dualizing technique and applying affine dual recourse adaptation to address an adaptive robust linear optimization problem. When the uncertainty set is polyhedral, this approach can also be used to provide safe approximations for robust optimization models with  biconvex constraint functions including those with recourse adaptation \citep{trevor2018dual_twostage,Roos2020}. We use this approach to obtain solutions to our proposed robust conic satisficing models. We also highlight the key challenge that in providing safe approximations to the robust satisficing models; it is {\em sine qua non} to ensure that the approximations do not lead to infeasible problems if the chosen target is above the optimum objective when the empirical optimization problem is minimized. This necessitates careful analysis and derivation of conditions for which 
safe approximations can be provided for a broad class of robust satisficing models. We summarize the contributions of this paper below:

\begin{enumerate}
    \item  We provide a unifying robust conic satisficing framework to tackle overfitting in data-driven optimization that can be applied to predictive and prescriptive analytics. We solve the robust conic satisficing model either exactly when possible, or provide a {\em tractable safe approximation} that ensures its feasibility for any chosen target  that is achievable by the empirical optimization problem.
    \item For a biconvex quadratic evaluation function with a quadratic penalty and an ellipsoidal support set, we derive an exact semidefinite optimization formulation of the robust satisficing problem.  For a more general conic representable evaluation function, we derive a tractable safe approximation using {\em affine dual recourse adaptation} under assumptions of complete and bounded recourse along with a polyhedral support set and polyhedral penalty function.  
    \item  We show that the perspectification approach to model nonlinear convex function as a conic optimization problem does not necessarily have the property of complete and bounded recourse. Hence, we introduce a novel {\em perspective casting} approach to model such an evaluation function as an equivalent conic optimization problem with complete and bounded recourse over a {\em featured domain.} 
    \item For linear optimization with recourse, we show that the tractable safe approximation via affine dual recourse adaptation dominates the primal approach in terms of the approximation quality and computational performance. 
    \item
 We showcase the performance benefits of the robust satisficing framework with a portfolio selection problem that maximizes expected exponential utility compared to the solutions generated from empirical optimization and stochastic optimization models, which are prone to overfitting. 
\end{enumerate}
\textbf{Notation:} We use $\mathbb{R}$  to denote the space of reals while $\mathbb{R}_{+}$ and $\mathbb{R}_{++}$ denote the sets of non-negative and strictly positive real numbers, respectively. We use boldface small-case letters (\textit{e.g.} $\bm{x}$) to denote vectors, capitals (\textit{e.g.} $\bm{A}$) to denote matrices and capital caligraphic letters to denote sets (\textit{e.g.} $\mathcal{Z}$) including cones (\textit{e.g.} $\mathcal{K}$). $\mathcal{R}^{m,n}$ and $\mathcal{L}^{m,n}$ are used to denote the set of all functions and its sub-class of affine functions, respectively, from $\mathbb{R}^m$ to $\mathbb{R}^n$. The transpose of a vector (matrix) is denoted by $\bm{x}^\top\;(\bm{A}^\top$). 
We use $[n]$  to denote the running index set $\{1,2,...,n\}$, and the superscript indexing (\textit{e.g.} $\bm{w}^i$) is used to  denote the $i^{th}$ vector (matrix) among a countable set of vectors $\{\bm{w}^i\}$ (matrices) and subscript indexing (\textit{e.g.} $\bm{A}_i$) to denote the $i^{th}$ row of a matrix $\bm{A}$. A random vector is denoted with a tilde sign (\textit{e.g.} $\tilde{\bm{z}}$), and its probability distribution is denoted by $\bmt z \sim \bbp$, $\bbp \in \cp_0(\cz)$,  where $\cp_0(\cz)$ represents the set of all possible distributions with support $\cz \subseteq \bbr^n$.  For a composite random variable, $f(\bmt z)$, $f:\bbr^n\rightarrow \bbr$, we use $\ep{f(\bmt z)}$ to denote expectation of the composite random variable over the distribution $\bmt z\sim \bbp$.
Finally, $\bm{0}~(\bm{1})$ denotes the vector of all zeros (ones) and its dimension should be clear from the context, while the identity matrix of order $n$ is denoted by $\bm{I}_{n}$.


\section{Mitigating ambiguity in data-driven optimization}\label{sec:datadriven}
We consider a data-driven risk-based minimization problem explored in \cite{esfahani2018data}, where we denote by $\bm x \in \mathcal{X} \subseteq \bbr ^{n_x}$ a vector of decision variables and $g(\bm x, \bm z)$ is  the {\em evaluation function} associated with a vector of input parameters $\bm z \in \mathcal{Z} \subseteq \bbr^{n_z}$ that is subject to uncertainty. We denote $\bmt z$ as the corresponding random variable with an underlying probability distribution $\bbp^{\star} \in \mathcal P_0(\cz)$, $\bmt z \sim \bbp^{\star}$. Hence, for a given decision $\bm x \in \cx$, the evaluation function $g(\bm x,\bmt z)$, $\bmt z \sim \bbp^{\star}$, is a composite random variable. Ideally, we would like to determine the optimal solution to the following optimization problem, 
\begin{equation}
    \label{eq:idealopt} 
Z^{\star}= \min_{\bm x \in \mathcal X} \epstar{g(\bm x,\bmt z)},
\end{equation}
where the objective function is the expectation of the evaluation function. However, the underlying distribution $\bbp^{\star}$ is not observable. Instead, we have access to $\Omega$ samples and for each $\omega \in [\Omega]$, $\hat{\bm z}^\omega$  denotes an independent realization of the random variable $\bmt z$, $\bmt z \sim \bbp^{\star}$. In situations of limited data availability, it is well possible to have $\Omega =1$. In which case, using the terminology in robust optimization,  $\bmh z^1$ would represent the nominal values of the input parameters. We also denote the empirical distribution by $\hat{\mathbb P} \in \mathcal P_0(\mathcal Z)$, $\bmt z \sim \hat{\mathbb P}$, such that $\pphat{\bmt z =\hat{\bm z}^\omega } = 1/\Omega$, $\omega \in [\Omega]$. 

To set our work apart from \cite{esfahani2018data}, we focus on an evaluation function that is non-piecewise-linear and bi-convex in $(\bm x,\bm z)$.  As we will show, our framework is a generalization and could address a broader class of problems such as those arising in risk management, for which the evaluation function is associated with a risk-adverse decision maker that can be expressed as 
$$
g(\bm x,\bm z) = \ell(f(\bm x,\bm z)), 
$$
where $f(\bm x,\bm z)$ is the underlying cost function and $\ell:\bbr\rightarrow \bbr$ is a convex increasing disutility function.  For instance, the exponential disutility, which is far more commonly adopted than other types of disutility functions combined \citep{corner1995characteristics}, has the following representation
$$
\ell_a(w) = \left\{\begin{array}{ll}   (\exp(aw)-1)/a & \mbox{if $a>0$,}\\
w& \mbox{if $a=0$}
\end{array}\right.
$$ 
where $a$ is known as the Arrow-Pratt measure of risk-aversion.  Note that the disutility function is normalized  such that $\ell_a(0)=0$ and $1 \in \partial \ell_a(0)$.

The same framework can also be applied to solving linear regression and classification problem \citep[see, for instance,][]{shafieezadeh2019regularization}, where we predict the scalar response variable $\tilde{z}_{n+1}$ using an affine function of $n$ explanatory variables  $\tilde{z}_1, \dots, \tilde z_{n}$. The evaluation function relates to the prediction loss, {\em i.e.}, 
$$
g(\bm x,\bm z) = \ell \left( \left(x_{n+1}+\sum_{i\in [n]} x_i z_i\right) - z_{n+1} \right), 
$$
where $\ell:\bbr \rightarrow  \bbr_+$ is a non-piecewise-linear and convex loss function such as
\begin{enumerate}
\item Quadratic loss: $\ell(w)=w^2$.
\item Huber loss with parameter $\delta>0$: $\ell (w)=\frac{w^2}{2}$ if $|w|\leq \delta$; otherwise, $\ell(w)=\delta(|w|-\frac{\delta}{2})$.
\item Squared hinge loss: $\ell(w)=(\max\{w,0\})^2$.
\item Logexp loss: $\ell(w)=\log(1+\exp(w))$.
\end{enumerate}

\subsection{Empirical and stochastic optimization}

Since $\mathbb P^{\star}$ is unobservable, a reasonable approach is to solve the {\em empirical optimization} model  as follows
\begin{equation}
    \label{eq:empopt} 
Z_0= \min_{\bm x \in \mathcal X} \ephat{g(\bm x,\bmt z)} = \min_{\bm x \in \mathcal X} \frac{1}{\Omega} \sum_{ \omega \in [\Omega]} g(\bm x,\bmh z^\omega).
\end{equation}

\begin{assumption} \label{assm:existence}
We assume that the empirical optimization problem \eqref{eq:idealopt} is solvable, {\em i.e.}, there exists $\bmh x \in \cx$ such that $$
Z_0=  \frac{1}{\Omega}\sum_{ \omega \in [\Omega]} g(\bmh x,\bmh z^\omega).$$
\end{assumption}

More generally, in stochastic optimization, we would assume a distribution $\bbp^\dag \in \cp_0(\bbr^n_z)$, $\bmt z \sim \bbp^\dag$ in which the parameters to characterize the distribution $\bbp^\dag$ is determined from the empirical distribution, $\bbphat$. For instance, it is common to assume multivariate normal distribution, {\em i.e.},  $\bbp^\dag = \mathcal N( \bmh \mu, \bmh \Sigma )$ where  $\bmh \mu = \ephat{\bmt z}$ and $\bmh \Sigma = \ephat{(\bmt z- \bmh \mu)(\bmt z- \bmh \mu)^\top}$. Subsequently, stochastic optimization solves the following optimization problem, 
\begin{equation}
    \label{eq:stocopt} 
Z^\dag= \min_{\bm x \in \mathcal X} \epdag{g(\bm x,\bmt z)},
\end{equation}
which has an objective function that is much harder to evaluate compare to the empirical optimization problem. Nevertheless, {\em sample average approximation} (SAA) can be  used to approximate the objective function of Problem~\eqref{eq:stocopt} by solving the following optimization problem 
\begin{equation}
    \label{eq:SAAopt} 
Z^{\dag,S}= \min_{\bm x \in \mathcal X} \frac{1}{S} \sum_{ s \in [S]} {g(\bm x,{\bm z}^{\dag,s})},
\end{equation}
where ${\bm z}^{\dag,s}$, $s \in [S]$ are $S$ samples independently generated from the distribution $\bbp^\dag$. We stress that SAA differs from empirical optimization because in the latter, we only have access to the available empirical samples, but not the underlying probability distribution that would enable us to generate an arbitrary number of independent samples. Moreover, it should be noted that with sufficient computational resources, SAA could at best obtain the optimum solution to Problem~\eqref{eq:stocopt}, which is not the same as the actual optimum solution of Problem~\eqref{eq:idealopt}. For instance, if $g(\bm x,\bm z)$ is affine in the uncertain parameters, then
$$
\epdag{g(\bm x,\bmt z)} = g(\bm x,\ephat{\bmt z}|)   =  \ephat{g(\bm x,\bmt z)},
$$
and, in this case, solving the empirical optimization problem would yield the same solution as the exact stochastic optimization problem.

\cite{smith2006optimizercurse} observe that if we solve a stochastic optimization problem using parameters estimated from the empirical distribution, due to the {\em Optimizer's curse}, we should also expect inferior out-of-sample results. In the context of regression and classification, solving the the empirical optimization problem is similar to solving the problem without regularization, which can lead to poor out-of-sample performance due to overfitting. 

\subsection{Data-driven robust optimization}
To mitigate the {\em Optimizer's curse}, \cite{esfahani2018data} propose the seminal data driven robust optimization model, which solves the following optimization problem
\begin{equation}
    \label{eq:datarobustopt} 
Z_r= \min_{\bm x \in \mathcal X} \sup_{\bbp \in \cf(r)}\ep{g(\bm x,\bmt z)},
\end{equation}
for a given ambiguity set that considers a family of probability distributions within the vicinity of $\bbphat$ given by
$$
\mathcal F(r) = \left\{ \bbp \in \cp_0(\cz) \left| \begin{array}{ll}
\bmt z \sim \bbp \\
\Delta_q(\bbp,\bbphat) \leq r
\end{array} \right. \right\},
$$
where 
\begin{equation*}
\Delta_q(\bbp,\bbphat):= \inf_{\bbq \in \cp_0(\cz^2)} \left\{ \left(\eq {\|\bmt{z}-\bmt{v}\|^q}\right)^{1/q}~\left|~ (\bmt z,\bmt v)\sim \bbq, \bmt{z} \sim \bbp, \bmt{v} \sim \bbphat\
\right. \right\},
\end{equation*}
is the type-$q$ Wasserstein distance metric, for $q \geq 1$. The consideration of the  Wasserstein distance metric is inspired by the following statistical property of how likely the unobservable true distribution $\bbp^{\star}$ is within the proximity of $\bbphat$ as follows:
\begin{theorem}{\bf \citep[Theorem 2 of][]{fournier2015rate}.}
\label{thm:fournier2015rate}
Let $\bbp^\Omega$ denote  the distribution that governs the distribution of the independent samples $\bmt{z}^1,\ldots,\bmt{z}^\Omega$ drawn from $\bbp^{\star}$ for which the empirical distribution $\bbphat$ is constructed. When the true data-generating distribution $\bbp^{\star}$, $\bmt z \sim \bbp^{\star}$ is a light-tailed distribution such that 
$\mathbb{E}_{\bbp^{\star}}[\exp(\|\bmt{z}\|^\alpha )] < \infty$ for some $\alpha>1$, then for all $r \geq 0$,
    \begin{equation*}
    \bbp^{\Omega}\left[\Delta_q(\bbp^{\star},\bbphat) > r\right] \leq \varepsilon_q(r)
\end{equation*}
for some function $\varepsilon_q(r):\bbr_+\rightarrow \bbr_+$ that decreases to zero at an exponential rate in $r>0$. 
\end{theorem}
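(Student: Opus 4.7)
The plan is to follow the strategy introduced by Fournier and Guillin, which bounds the Wasserstein distance by combining a truncation argument with a covering-number-based concentration inequality for the empirical measure on a bounded region. Fix a truncation radius $R>0$ and an approximation scale $\varepsilon>0$ (both to be optimized later). First, the light-tail hypothesis $\epstar{\exp(\|\bmt z\|^\alpha)}<\infty$ together with a Chernoff bound gives the tail estimate
\begin{equation*}
\bbp^{\star}\!\left[\|\bmt z\|>R\right]\;\leq\; M \exp(-R^\alpha) \qquad \text{for some constant } M>0,
\end{equation*}
which limits the cost of transporting the portion of either measure lying outside the ball $B_R=\{\bm z\in\cz:\|\bm z\|\leq R\}$. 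Control of the tail contribution to $\Delta_q(\bbp^{\star},\bbphat)^q$ reduces to iterating this bound together with a Borel--Cantelli--style refinement to handle $\bbphat$.

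Next, I would cover $B_R$ by an $\varepsilon$-net of cardinality $N_\varepsilon=O((R/\varepsilon)^{n_z})$ and form the induced partition $\{A_k\}_{k\in[N_\varepsilon]}$. Setting $p_k=\bbp^{\star}[\bmt z\in A_k]$ and $\hat p_k=\bbphat[\bmt z\in A_k]$, one can construct an explicit coupling that (i) transports the mass of $\bbphat\cap A_k$ within $A_k$ at per-unit cost at most $\varepsilon^q$, and (ii) settles the residual discrepancies across cells at per-unit cost bounded by $(2R)^q$. This yields the deterministic inequality
\begin{equation*}
\Delta_q(\bbp^{\star},\bbphat)^q\;\leq\; \varepsilon^q + (2R)^q \sum_{k\in[N_\varepsilon]} |p_k-\hat p_k| + T(R),
\end{equation*}
where $T(R)$ aggregates the tail contributions controlled in the previous step. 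Because each $\hat p_k$ is an average of $\Omega$ i.i.d.\ Bernoulli indicators with mean $p_k$, Bretagnolle--Huber--Carol's $\ell_1$-concentration inequality for multinomial vectors implies
\begin{equation*}
\bbp^{\Omega}\!\left[\,\sum_{k\in[N_\varepsilon]} |p_k-\hat p_k|\;>\;\delta\,\right]\;\leq\; 2^{N_\varepsilon}\exp\!\left(-\tfrac12 \Omega\, \delta^2\right).
\end{equation*}

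Substituting this back and choosing $R$, $\varepsilon$, and the target deviation $\delta$ as explicit functions of $r$ and $\Omega$ gives a bound of the form $\bbp^{\Omega}[\Delta_q(\bbp^{\star},\bbphat)>r]\leq \varepsilon_q(r)$ with $\varepsilon_q(r)$ decaying to zero exponentially in $r$. Concretely, one takes $R$ of order $(\log(1/r))^{1/\alpha}$ so that $T(R)\lesssim r^q$, takes $\varepsilon$ of order $r$, and then calibrates $\delta$ so that the right-hand side of the coupling bound is $\leq r^q$. The main obstacle is precisely this calibration: the covering number $N_\varepsilon\asymp (R/\varepsilon)^{n_z}$ appears as $2^{N_\varepsilon}$ in the concentration bound, and hence cannot be taken too large relative to $\Omega \delta^2$; balancing this combinatorial cost against the tail and in-cell errors—while exploiting $\alpha>1$ to absorb polynomial prefactors into the exponential tail estimate—is the delicate computational core of the argument and is what yields the quantitatively explicit exponential rate $\varepsilon_q(r)$ reported by Fournier and Guillin.
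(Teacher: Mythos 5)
This statement is not proved in the paper at all: it is imported verbatim as a citation to Theorem~2 of \cite{fournier2015rate}, so there is no in-paper argument to compare against. Your proposal is a reconstruction of the Fournier--Guillin proof itself, and it correctly identifies the main ingredients of that argument: a tail truncation via Markov's inequality applied to $\exp(\|\bmt z\|^\alpha)$, a partition-based coupling that bounds $\Delta_q(\bbp^\star,\bbphat)^q$ by an in-cell term plus a weighted total-variation discrepancy over the cells, and a multinomial concentration inequality of Bretagnolle--Huber--Carol type. As a high-level outline of the cited result this is faithful.

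Two substantive caveats. First, your final calibration is aimed at the wrong asymptotic regime: you choose $R$ of order $(\log(1/r))^{1/\alpha}$ and $\varepsilon$ of order $r$, which is the scaling one uses to control $\bbp^\Omega[\Delta_q > r]$ as $r \downarrow 0$ for growing $\Omega$. The property actually asserted in the theorem --- that $\varepsilon_q(r)$ decays to zero exponentially as $r$ \emph{increases} --- is driven by the large-deviation regime, where the event $\Delta_q(\bbp^\star,\bbphat) > r$ forces $\frac{1}{\Omega}\sum_{\omega}\|\bmt z^\omega\|^q$ to be of order $r^q$ and the light-tail condition is applied via a Chernoff bound to the empirical moment, not to the covering construction; your $R$ should grow with $r$ there, and the net plays essentially no role. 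Second, the single-scale $\varepsilon$-net with cross-cell cost $(2R)^q$ is too crude to reproduce the quantitative rates of Fournier--Guillin: their proof uses a recursive dyadic (multiscale) partition so that mass mismatches are charged at the scale at which they occur rather than at the diameter $2R$. For the purely qualitative statement quoted here the crude bound could be made to work in the small-$r$ regime, but the step you flag as the ``delicate computational core'' is precisely where the single-scale construction breaks down, since $2^{N_\varepsilon}$ with $N_\varepsilon \asymp (R/\varepsilon)^{n_z}$ overwhelms $\exp(-\Omega\delta^2/2)$ unless the multiscale refinement is used.
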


\cite{esfahani2018data} derive an equivalent minimax reformulation of Problem~\eqref{eq:datarobustopt} when $q = 1$. Such a reformulation can be readily extended to other $q \geq 1$, \emph {i.e.,}
\begin{equation}
\label{eq:datarobustopt2}
    \begin{array}{rcll}
         Z_r = &  \min & \displaystyle \frac{1}{\Omega}\sum_{\omega\in[\Omega]}  \sup_{\bm{z}\in\cz} \{ g(\bm{x},\bm{z}) - k (\|\bm{z} - \bmh{z}^\omega\|^q-r^q)\}
         \\
         & {\rm s.t.} &  \bm{x}\in \cx,~ k\geq 0.
    \end{array}
\end{equation}

\cite{shafieezadeh2019regularization} have applied the  data-driven robust optimization framework to improve the solutions of linear regression and classification problems, where they have established the connections with regularization if the loss function is Lipschitz continuous and having an unbounded support $\cz = \bbr^{n_z}$. More recently, \cite{sim2021tractable} extend the results to loss functions derived from Lipschitz continuous functions raised to the power of $q, q\geq 1$. They propose tractable type-$q$ robust optimization and satisficing models and using  type-$q$ Wasserstein distance, and show their relations with $q$-root regularization problems when the support is unbounded. However, apart from these special cases, when the support is bounded and the underlying evaluation function is nonlinear, it remains computational challenging to solve the data-driven robust optimization problem exactly. 

\subsection{On classical robust optimization}
It is interesting to note that for the extreme case of $\Omega=1$, the data-driven robust optimization~problem becomes
\begin{equation}
\label{eq:datarobustopt3}
\begin{array}{rcll}
         Z_r = &  \min & \displaystyle  \sup_{\bm{z}\in\cz} \{ g(\bm{x},\bm{z}) - k (\|\bm{z} - \bmh{z}^1\|^q-r^q)\}
         \\
         & {\rm s.t.} &  \bm{x}\in \cx,~ k\geq 0,
    \end{array}
\end{equation}
which has semblance to the classical stochastic-free robust optimization problem, 
\begin{equation}
\label{eq:robustopt1}
\begin{array}{rcll}
         \underline {Z}_r & = & \displaystyle  \min_{\bm{x}\in \cx}  \sup_{\bm{z}\in \mathcal U_r} \{ g(\bm{x},\bm{z})\} = \min_{\bm{x}\in \cx}  \sup_{\bm{z}\in \cz}  \min_{k \geq 0} \{ g(\bm{x},\bm{z}) )  -k (\|\bm{z} - \bmh{z}^1\|^q-r^q)\}
    \end{array}
\end{equation}
where $\bmh{z}^1$ is the nominal value and 
$\mathcal U_r = \left\{ \bm z \in \cz | \|\bm{z} - \bmh{z}^1\|\leq r)\right\}$ is the norm-based uncertainty set. Hence, by weak duality $\underline {Z}_r \leq Z_r$, and by strong duality the two coincide when $g(\bm{x},\bm{z})$ is also concave in $\bm z$. 

To illustrate the distinction between the data-driven robust optimization with $\Omega = 1$ sample and classical robust optimization, we consider an uncertainty quantification problem with a univariate $z$. Supposing that $q = 2$, $g(z) = (z-1)^2$ and $\mathcal{Z} = \mathbb{R}_+$, we have
\begin{equation*}
    \underline{Z}_1 = \max_{z \in [0,1]} \left\{ (z-1)^2 \right\} = 1,
\end{equation*}
whereas
\begin{equation*}
\begin{aligned}
    Z_1 &=\ \min_{k \geq 0} \max_{z \geq 0} \left\{ (z-1)^2 - k(z^2 - 1) \right\} \\
    &=\ \min_{k \geq 0,t} \left\{ t \ \vert \ t \geq (z-1)^2 - k(z^2-1) ~ \forall z \geq 0 \right\} \\
    &=\ \min_{k \geq 0,t} \left\{ t \ \vert \ (k-1)z^2 + 2z + (t -1-k) \geq 0 ~ \forall z \geq 0 \right\}.
\end{aligned}
\end{equation*}
For this optimization problem characterizing $Z_1$, it necessarily holds that $k - 1$ and $t-1-k$ are both non-negative, and thus $Z_1 \geq 2 > \underline{Z}_1$.

 While classical robust optimization models are well justified in engineering problems such as trust topology design \cite{Bental1998}, it may not necessarily be the best approach in mitigating risks in management inspired optimization problems. In the context of risk management, where $g(\bm x,\bm z) = \ell(f(\bm x,\bm z))$, we note that since $\ell$ is an increasing function, we have 
$$
 \underline {Z}_r  =  \displaystyle  \ell\left( \min_{\bm{x}\in \cx}  \sup_{\bm{z}\in \mathcal U_r} \{ f(\bm{x},\bm{z})) \}\right),
$$
which, unlike Problem~\eqref{eq:datarobustopt3}, the optimal robust solution of Problem~\eqref{eq:robustopt1} would not be dependent on the disutility function at all. 
 {\em We emphasize that, as far as we know, the statistical property of the robust solution has only been established for when the evaluation  function is affine  \citep{Bertsimas_Sim_2004,ben2000robust} or concave \citep{bertsimas2018data} in its uncertain parameters. }
For instance, under the assumption that the random variable $\bmt z$, $\bmt z \sim \bbp$, $\bbp \in \mathcal P_0(\cz)$, with $\cz = [-1,1]^{n_z}$, $\ep{\bmt z }=\bm 0$ its components are independently distributed, \cite{Bertsimas_Sim_2004} show that if $g(\bm x, \bm z)$ is affine in $\bm z$, by choosing $r=c\sqrt{n_z}$, then
$$
\pp{g(\bm x, \bmt z) \leq \max_{\bm{z}\in \mathcal U_r} g(\bm x, \bm z) } > 
1-\exp(-c^2/2).
$$ However, it is important to note that the bound can be rather weak and may not apply if the evaluation function $g(\bm x, \bm z)$ is not affine  in the uncertain parameters. In the extreme case, if the function is $g(\bm x, \bm z) = \|\bm z\|_1$, then we would have 
$$\pp{g(\bm x, \bmt z) \leq \max_{\bm{z} \in \mathcal U_r} g(\bm x, \bm z)} =
\pp{\| \bmt z\|_1 \leq \max_{\bm{z}\in \cz:\|\bm z\|_1 \leq r} \;\|\bm z\|_1}= 
\pp{\bmt z \in \mathcal U_r},$$ which is the minimal probability guarantee assured by the robust optimization solution. However, such assurance may not be adequate for $r = c\sqrt{n_z}$ as demonstrated in the following result. 
\begin{proposition}
	\label{prop:RObound}
	For each $i\in[n_z]$, let $\tilde{z}_i$ be an independently distributed random variable with support $[-1,1]$, zero mean and variance $\theta >0$. For a given budgeted uncertainty set $\mathcal U_r$, $r=c\sqrt{n_z}$ and $c\in (0, \theta \sqrt{n_z})$, then 
	\begin{equation}
	\label{eqn:RObound_sqrtN}
		\displaystyle \bbp\left [\bmt{z}\in\cal{U}_r \right]\leq \exp\left(-\frac{\left(\theta \sqrt{ n_z} - c\right)^2}{2\theta}\right).
	\end{equation}	
\end{proposition}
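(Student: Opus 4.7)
The plan is to recognize that $\mathcal{U}_r$, being the budgeted uncertainty set, reduces the event $\{\tilde{\bm{z}}\in\mathcal{U}_r\}$ to a single scalar tail event. Since each $\tilde z_i$ takes values in $[-1,1]$, the $\ell_\infty$ constraint is automatic, and $\tilde{\bm{z}}\in\mathcal{U}_r$ is equivalent to $S:=\sum_{i\in[n_z]} Y_i\leq c\sqrt{n_z}$, where $Y_i:=|\tilde z_i|\in[0,1]$ are independent with $\mathbb{E}[Y_i^2]=\mathbb{E}[\tilde z_i^2]=\theta$. The key structural observation is that $Y_i\leq 1$ implies $Y_i^2\leq Y_i$, so $\mu_i:=\mathbb{E}[Y_i]\geq \theta$ and in particular $\mathbb{E}[S]\geq n_z\theta>c\sqrt{n_z}$; the event we want to bound is therefore a genuine lower deviation, and a Chernoff argument is natural.

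The next step is the standard one-sided Chernoff bound: for any $t\geq 0$,
\[
\mathbb{P}[S\leq c\sqrt{n_z}] \ \leq\ e^{\,tc\sqrt{n_z}}\prod_{i\in[n_z]}\mathbb{E}\!\left[e^{-tY_i}\right].
\]
To extract the variance $\theta$ (rather than only the range) I would use the elementary inequality $e^{-u}\leq 1-u+\tfrac12 u^2$, valid for all $u\geq 0$ (check: the difference has value and derivative $0$ at $u=0$ and non-negative second derivative $1-e^{-u}$). Applied with $u=tY_i\geq 0$ and combined with $\mu_i\geq\theta$, this yields
\[
\mathbb{E}\!\left[e^{-tY_i}\right]\ \leq\ 1-t\mu_i+\tfrac12 t^2\theta\ \leq\ 1-t\theta+\tfrac12 t^2\theta\ \leq\ \exp\!\left(-t\theta+\tfrac12 t^2\theta\right),
\]
so
\[
\mathbb{P}[S\leq c\sqrt{n_z}]\ \leq\ \exp\!\left(tc\sqrt{n_z}-tn_z\theta+\tfrac12 t^2 n_z\theta\right).
\]

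Finally I would minimize the exponent over $t\geq 0$. Differentiating gives the optimizer $t^{\star}=(\sqrt{n_z}\theta-c)/(\sqrt{n_z}\theta)$, which lies in $(0,1]$ precisely because $0<c<\theta\sqrt{n_z}$ (the hypothesis on $c$). Plugging $t^\star$ back in collapses the quadratic to $-(\sqrt{n_z}\theta-c)^2/(2\theta)$, which is exactly bound \eqref{eqn:RObound_sqrtN}. The main obstacle is not the optimization but choosing the correct MGF surrogate: Hoeffding's lemma, which only sees the range $[0,1]$, yields a weaker $\theta$-free bound, so one must exploit $\mathbb{E}[Y_i^2]=\theta$ through the quadratic upper bound on $e^{-u}$, and then rely on the support condition $Y_i\leq 1$ to upgrade $\mathbb{E}[Y_i^2]$ into a lower bound on $\mathbb{E}[Y_i]$, which is what makes the Chernoff argument drive $\mathbb{P}[S\leq c\sqrt{n_z}]$ to zero.
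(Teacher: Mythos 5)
Your proof is correct and reaches exactly the stated bound, but the middle step is genuinely different from the paper's. Both arguments reduce the event $\{\bmt z\in\mathcal U_r\}$ to the lower-tail event $\sum_{i\in[n_z]}|\tilde z_i|\leq c\sqrt{n_z}$, apply a Chernoff bound, and exploit the same crucial observation that $|\tilde z_i|\leq 1$ forces $\mathbb{E}[|\tilde z_i|]\geq\mathbb{E}[\tilde z_i^2]=\theta$, so the event is a genuine lower deviation. Where you diverge is in bounding the moment generating function $\mathbb{E}[e^{-t|\tilde z_i|}]$: the paper sets up a small distributionally robust optimization problem over all distributions on $[0,1]$ with fixed mean, dualizes it to show the worst case is a two-point (Bernoulli) distribution at the endpoints, and then invokes the multiplicative Chernoff bound for sums of independent Bernoullis as a known result, finishing with a separate monotonicity argument to replace $\mu=\sum_i\mathbb{E}[|\tilde z_i|]$ by $n_z\theta$. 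You instead use the elementary pointwise inequality $e^{-u}\leq 1-u+\tfrac12 u^2$ for $u\geq 0$, which injects the second moment $\theta$ directly into the MGF bound, absorb $\mu_i\geq\theta$ at that stage, and optimize $t$ in closed form (your $t^\star$ lies in $(0,1)$, not $(0,1]$, but only $t^\star>0$ matters). Your route is fully self-contained and avoids both the external citation and the final monotonicity check; the paper's route is thematically consistent with its DRO machinery but leans on a quoted result. Both land on the identical exponent $-(\theta\sqrt{n_z}-c)^2/(2\theta)$.
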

\begin{proof}{Proof.}
The proof is relegated to Appendix \ref{appendix:proof_of_results}.
\end{proof}
Hence, the probability that the realization of uncertain parameter $\bmt{z}$ lies in the uncertainty set $\cal{U}_r$, $r=c\sqrt{n_z}$ decreases exponentially to zero as the dimension $n_z$ increases. 

Therefore, one should exercise  caution in using the budgeted uncertainty set beyond evaluation  functions that are affine in the uncertain parameters because its probability bound can become ineffective as $n_z$ increases. In contrast, the finite sample probabilistic guarantee that motivates the data-driven robust optimization model does not depend on the nature of the evaluation function.   

\

\subsection{Data-driven robust satisficing}
A target based data driven robust satsificing model has recently been proposed by \cite{long2022robust}, which we also generalize to type-$q$ Wasserstein distance metric. 
For a given target $\tau \geq Z_0$, we solve the following optimization problem
\begin{equation}
\label{eq:data-drivenRS}
\begin{aligned}
\; &\text{minimize} && k\\
    &\text{subject to} &&
    \ep{g(\bm x,\bmt z)}  \leq \tau + k(\Delta_q(\bbp,\bbphat))^q &\forall \bbp \in \mathcal P_0(\mathcal Z) \\
    & && \bm{x} \in \mathcal{X}, k\geq 0, 
\end{aligned}  
\end{equation}

As a consequence of Theorem~\ref{thm:fournier2015rate}, the feasible solution of the robust satisficing problem would satisfy

\begin{equation*}
    \bbp^\Omega\left[\mathbb{E}_{\bbp^{\star}}\left[
     g(\bm x,\bmt z) \right] > \tau  + k r^q\right] 
    \leq \bbp^\Omega\left[
    \Delta_q(\bbp^\star,\bbphat) >  r\right] \leq 
    \varepsilon_q(r),
\end{equation*}
for all $r \geq 0$, justifying the lower value of $k$, the higher the level of confidence that the actual objective function evaluated on the true distribution is within $kr^q + \tau$, such that the probability of exceeding the confidence range decreases to zero at an exponential rate in $r$, accordingly to the measure concentration result of \cite{fournier2015rate}; see~\emph{e.g.} Theorem~\ref{thm:fournier2015rate}.    

Observe that 
$$
\begin{array}{rcll}
&& \ep{g(\bm x,\bmt z)}  \leq \tau + k(\Delta_q(\bbp,\bbphat))^q &\forall \bbp \in \mathcal P_0(\cz)\\
&\Leftrightarrow & 
\eq{g(\bm x,\bmt z)}  \leq \tau + k \eq {\|\bmt{z}-\bmt{v}\|^q} &\forall \bbq \in \mathcal P_0(\cz^2): (\bmt z,\bmt v) \sim \bbq, \bmt v
\sim \bbphat\\
&\Leftrightarrow & 
\eq{g(\bm x,\bmt z) -k\|\bmt{z}-\bmt{v}\|^q}  \leq \tau  &\forall \bbq \in \mathcal P_0(\cz^2): (\bmt z,\bmt v) \sim \bbq, \bmt v
\sim \bbphat\\
&\Leftrightarrow & \displaystyle 
\frac{1}{\Omega}\sum_{\omega \in [\Omega]} \sup_{\bbp' \in \mathcal P_0(\cz)}\mathbb E_{\bbp'}\left[g(\bm x,\bmt z) -k\|\bmt{z}-\bmh{z}^\omega\|^q\right]  \leq \tau  \\
&\Leftrightarrow & \displaystyle 
\frac{1}{\Omega} \sum_{\omega \in [\Omega]} \sup_{\bm z \in \cz}\left\{g(\bm x,\bm z) -k\|\bm{z}-\bmh{z}^\omega\|^q\right\}  \leq \tau.
\end{array}
$$
Hence, the data-driven robust satisficing problem can be expressed as the following robust optimization problem,
\begin{equation}
\label{eq:data-drivenRS2}
\begin{aligned}
 &\text{minimize} && k\\
    &\text{subject to} &&
     \frac{1}{\Omega}\sum_{\omega \in [\Omega]} \sup_{\bm z \in \cz} \left\{ g(\bm x,\bm z) - k \|\bm z - \bmh z^\omega\|^q\right\}  \leq \tau \\
    & && \bm{x} \in \mathcal{X}, k\geq 0,
\end{aligned}  
\end{equation}
which is computationally equivalent to the data-driven robust optimization model. In the numerical studies of \cite{long2022robust}, the family of solutions generated by the robust satisficing model would dominate of those obtained by solving the data-driven the robust optimization models.
On solving supervised learning problems, 
\cite{sim2021tractable} demonstrate in studies on well-known data sets that the robust satisficing approach would generally outperform the robust optimization approach, when the respective target and size parameters are determined via cross-validation.
Hence, we will focus on solving robust satisficing model. Observe that we can express Problem~\eqref{eq:data-drivenRS2} as
\begin{equation}
\label{eq:data-drivenRS3}
\begin{aligned}
 &\text{minimize} && k\\
    &\text{subject to} &&
     \frac{1}{\Omega}\sum_{\omega \in [\Omega]} \upsilon_\omega  \leq \tau \\
     &&& (\bm x,\upsilon_\omega,k) \in \mathcal Y^\omega & \forall \omega \in [\Omega]\\
     &&& \bm x \in \cx, \bm \upsilon \in \bbr^\Omega, k \in \bbr_+,
\end{aligned}  
\end{equation}
where
$$
\mathcal Y^\omega \triangleq \left\{(\bm x,\upsilon,k) \in \cx \times \bbr \times \bbr_+ ~\left| ~ \sup_{\bm z \in \cz^\omega} \left\{ g^\omega(\bm x,\bm z) - k p(\bm z) \right\}\leq \upsilon\right.\right\}, 
$$
$
\cz^\omega \triangleq \{ \bm z \in \bbr^{n_z} | \bm z + \bmh z^\omega \in \cz \},$
$g^\omega(\bm x,\bm z) \triangleq   g(\bm x,\bm z + \bmh z^\omega)$
and 
$
p(\bm z) \triangleq  \|\bm z\|^q.$
Note that under this representation, we have $\bm 0 \in \cz_\omega$ for all $\omega \in [\Omega]$ and and $p(\bm 0)=0$. The set $\mathcal Y^\omega$ is typically computational intractable to characterize exactly. \cite{long2022robust} focus on piecewise-linear and bi-convex evaluation functions and their proposed approach to solve the robust satisficing model does not extend beyond linearity. 

To extend the scope of robust satisficing, we will replace $\mathcal Y^\omega$, with a computationally tractable convex set, $\bar{\mathcal Y}^\omega \subseteq \mathcal Y^\omega$, for all $\omega \in [\Omega]$, in the following optimization problem, 
\begin{equation}
\label{eq:data-drivenRS4}
\begin{aligned}
 &\text{minimize} && k\\
    &\text{subject to} &&
     \frac{1}{\Omega}\sum_{\omega \in [\Omega]} \upsilon_\omega  \leq \tau \\
     &&& (\bm x,\upsilon_\omega,k) \in \bar{\mathcal Y}^\omega & \forall \omega \in [\Omega]\\
         &&& \bm x \in \cx, \bm \upsilon \in \bbr^\Omega, k \in \bbr_+. 
\end{aligned}  
\end{equation}
As Problem~\eqref{eq:data-drivenRS4} is a conservative approximation of Problem~\eqref{eq:data-drivenRS}, a key challenge in approximating the robust satisficing model is to choose $\bar{\mathcal{Y}}^\omega$, $w \in [\Omega]$ such that Problem~\eqref{eq:data-drivenRS4} remains feasible for any appropriately chosen target $\tau > Z_0$, where we recall that $Z_0$ denotes the optimal objective value of the empirical optimization model~\eqref{eq:idealopt}.

\begin{definition}
\label{def:safeapprox}
We say Problem~\eqref{eq:data-drivenRS4} is a {\em tractable safe approximation} of the data-driven robust satisficing problem~\eqref{eq:data-drivenRS3} if for all $\omega \in [\Omega]$ the following properties of  $\bar{\mathcal Y}^\omega$ hold: 
\begin{enumerate}
    \item $\bar{\mathcal Y}^\omega$ is a computationally tractable convex set.
    \item $\bar{\mathcal Y}^\omega \subseteq \mathcal Y^\omega$.
\item The set
$\left\{ \bm x \in \cx \left| (\bm x,\upsilon, k) \in \bar{\mathcal Y}^\omega \right.\right\}$
is nondecreasing in $\upsilon$ and $k$. 
    \item For any $\bm x \in \cx$ and  $\upsilon > g^\omega(\bm x,\bm 0)$, there exists a finite $\bar{k} \geq 0$ such that 
$(\bm x,\upsilon,\bar k) \in \bar{\mathcal Y}^\omega.$
\end{enumerate}
\end{definition}

\begin{proposition}
\label{prop:RSfeasibility}
Under Assumption~\ref{assm:existence},if Problem~\eqref{eq:data-drivenRS4} is a  tractable safe approximation of Problem~\eqref{eq:data-drivenRS3}, then  Problem~\eqref{eq:data-drivenRS4} is feasible for all $\tau > Z_0$.
\end{proposition}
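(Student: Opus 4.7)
The plan is to construct an explicit feasible triple $(\hat{\bm x},\bm\upsilon,k)$ for Problem~\eqref{eq:data-drivenRS4} using the empirical minimizer guaranteed by Assumption~\ref{assm:existence}, and then to use properties~3 and~4 of Definition~\ref{def:safeapprox} in sequence to certify its feasibility.

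First, I would invoke Assumption~\ref{assm:existence} to fix $\hat{\bm x}\in\cx$ satisfying $Z_0=\frac{1}{\Omega}\sum_{\omega\in[\Omega]}g(\hat{\bm x},\hat{\bm z}^\omega)$. Noting that $g^\omega(\hat{\bm x},\bm 0)=g(\hat{\bm x},\hat{\bm z}^\omega)$ by the definition of $g^\omega$, the average of these baseline values is exactly $Z_0$. Since $\tau>Z_0$, I would set $\delta=(\tau-Z_0)/2>0$ and define
\begin{equation*}
\upsilon_\omega \;=\; g^\omega(\hat{\bm x},\bm 0) + \delta \qquad \forall\,\omega\in[\Omega],
\end{equation*}
so that $\upsilon_\omega>g^\omega(\hat{\bm x},\bm 0)$ for every $\omega$, and at the same time
\begin{equation*}
\frac{1}{\Omega}\sum_{\omega\in[\Omega]}\upsilon_\omega \;=\; Z_0+\delta \;=\;\tfrac{1}{2}(\tau+Z_0)\;<\;\tau.
\end{equation*}

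Next, I would apply property~4 of Definition~\ref{def:safeapprox} separately to each index $\omega\in[\Omega]$: since $\upsilon_\omega>g^\omega(\hat{\bm x},\bm 0)$, there exists a finite $\bar k_\omega\geq 0$ such that $(\hat{\bm x},\upsilon_\omega,\bar k_\omega)\in\bar{\mathcal Y}^\omega$. The obstacle here is that the values $\bar k_\omega$ may differ across $\omega$, while Problem~\eqref{eq:data-drivenRS4} requires a single $k$ common to all constraints. This is precisely where property~3 steps in: by setting $k=\max_{\omega\in[\Omega]}\bar k_\omega$, which is finite, the monotonicity of $\bar{\mathcal Y}^\omega$ in its third argument yields $(\hat{\bm x},\upsilon_\omega,k)\in\bar{\mathcal Y}^\omega$ for every $\omega$.

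Finally, I would collect the pieces: the triple $(\hat{\bm x},\bm\upsilon,k)$ satisfies $\hat{\bm x}\in\cx$, $k\geq 0$, $\frac{1}{\Omega}\sum_\omega \upsilon_\omega<\tau$, and all $\Omega$ membership constraints, so it is feasible for Problem~\eqref{eq:data-drivenRS4}. No analytic estimates or optimization arguments are required; the entire proof is a structural consequence of the four properties in Definition~\ref{def:safeapprox}, with property~4 providing the crucial guarantee that the penalty coefficient $k$ can be taken finite whenever the target leaves room above the empirical evaluation, and property~3 allowing these $\omega$-wise certificates to be glued into a single $k$.
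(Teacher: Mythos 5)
Your proposal is correct and follows essentially the same route as the paper's proof: fix the empirical minimizer from Assumption~\ref{assm:existence}, shift each $g^\omega(\hat{\bm x},\bm 0)$ upward so the average stays at or below $\tau$ while each $\upsilon_\omega$ strictly exceeds its baseline, then apply property~4 of Definition~\ref{def:safeapprox} per scenario and property~3 to merge the resulting $\bar k_\omega$ into a single $k=\max_\omega \bar k_\omega$. The only cosmetic difference is your choice of offset $(\tau-Z_0)/2$ where the paper uses $\tau-Z_0$; both are valid.
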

\begin{proof}{Proof.}
The proof is relegated to Appendix \ref{appendix:proof_of_results}.
\end{proof}

\section{Tractable formulations and safe approximations}{\label{sec:tractable_conic_formulations}}

In this section, we focus on solving the data-driven robust satisficing problem~\eqref{eq:data-drivenRS2}, either exactly or approximately, while ensuring that the problem would remain feasible for all appropriate chosen targets $\tau > Z_0$. As the consequence of Proposition~\ref{prop:RSfeasibility}, we will
derive either exact or tractable safe approximation of the the data-driven robust satisficing problem~\eqref{eq:data-drivenRS2} by focusing on the set 
\begin{equation}
\label{eq:corerobustcounterpart}
\mathcal Y \triangleq \left\{(\bm x,\upsilon,k) \in \cx \times \bbr \times \bbr_+ ~\left| ~ \sup_{\bm z \in \cz} \left\{ g(\bm x,\bm z) - k p(\bm z) \right\}\leq \upsilon \right.\right\},
\end{equation}
for a given convex support set $\cz$ that contains the origin, $\bm 0 \in \cz$, a non-piecewise-linear and biconvex evaluation function $g(\bm x,\bm z)$ and a convex non-negative penalty function that satisfies $p(\bm z) = 0$ iff $\bm z = \bm 0$. Note that $p(\bm{z})$ is not necessarily a power norm and that we have removed the superscript $\omega$ to simply our exposition.  Indeed, how we can construct the tractable convex set $\bar{\mathcal Y} \subseteq \mathcal Y$ that satisfies the properties of Definition~\ref{def:safeapprox}, would depend on the functions $g$ and $p$ as well the support set $\cz.$ While robust counterparts and their tractable approximations have been well studied in the literature, the key challenge is to ensure that for any $\bm x \in \cx$ and  $\upsilon > g(\bm x,\bm 0)$,  there exists a finite $k \in \bbr_+$ such that 
$(\bm x,\upsilon, k) \in \bar{\mathcal Y}$.

For a biconvex quadratic evaluation function, a quadratic penalty function and an ellipsoidal uncertainty set, we derive the exact reformulation in the form of a tractable semidefinite optimization problem. Then, for a more general conic representable function with polyhedral support and penalty, we focus on obtaining a tractable safe approximation using {\em affine dual recourse adaptation} technique. 
 
\subsection{Quadratic evaluation and penalty functions}\label{subsec:quad_constr}
This problem is motivated from the  classical robust optimization proposed in \cite{Bental1998}, which involves the following quadratic biconvex evaluation function,
$$
g(\bm x, \bm z) = \| \bm A(\bm{x})\bm z + \bm a(\bm x)\|_2^2  + \bm b(\bm x)^\top\bm z + c(\bm x),
$$
where $\bm{a}: \mathbb{R}^{n_x} \rightarrow \mathbb{R}^{n_a},\;\bm{A}: \mathbb{R}^{n_x} \rightarrow \mathbb{R}^{n_a \times n_z},\;\bm{b}: \mathbb{R}^{n_x} \rightarrow \mathbb{R}^{n_z},\;c:\mathbb{R}^{n_x} \rightarrow \mathbb{R}$ are affine mappings of the decision variables $\bm{x}$.  This is perhaps the only meaningful nonlinear example in which the robust satisficing problem is tractable and can be formulated exactly as a convex optimization problem without the need for safe approximation. To do so, the penalty function has to be quadratic, and we focus on a squared Euclidean norm, 
$p(\bm z) =\| \bm z \|_2^2 = \bm z^\top \bm z$. We can also  consider an ellipsoidal support set 
$\mathcal{E}_r = \left\{ \bm{z} \in \mathbb R^{n_z} ~|~ \| \bm z \|_2\leq r \right\}$, for $r > 0$.


\begin{proposition}\label{prop:quadratic_satisficing}
The set
$$
\mathcal Y = \left\{ (\bm x,\upsilon,k) \in \cx \times \bbr \times \bbr_+ ~\left| 
\|\bm A(\bm{x})\bm z + \bm a(\bm x)\|_2^2  + \bm b(\bm x)^\top \bm z + c(\bm x) \leq k \bm z ^\top \bm  z+\upsilon, \,\, \forall \bm{z}\in \mathcal{E}_r \right.\right\}
$$ is equivalent to the following\\
\begin{equation*}
\left\{ (\bm x,\upsilon,k) \in \cx \times \bbr \times \bbr_+ ~\left| 
\begin{array}{l}
\exists \lambda \geq 0:\\
{\left[ \begin{array}{ccc}
{\bm{I}_{n_a}} & {\bm{a}(\bm{x})} & {\bm{A}(\bm{x})} \\ 
{\bm{a}(\bm{x})^{\top}} & {-c(\bm{x})+\upsilon-\lambda r^{2}} & {-\frac{1}{2} \bm{b}(\bm{x})^{\top}} \\ 
{\bm{A}(\bm{x})^{\top}} & {-\frac{1}{2} \bm{b}(\bm{x})} & (k+\lambda) \bm{I}_{n_{z}}
\end{array}
\right] \succeq \bm{0}}
\end{array}
\right.\right\}.
\end{equation*}
Moreover the set is feasible for any $\bm x \in \cx, \upsilon> g(\bm x,\bm 0) =  \|\bm a(\bm x)\|_2^2  + c(\bm x)$. 
\end{proposition}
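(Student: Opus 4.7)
The plan is to invoke the classical S-procedure to convert the robust quadratic constraint defining $\mathcal{Y}$ into a single quadratic nonnegativity condition parametrized by a multiplier $\lambda \geq 0$, and then to use a Schur complement to recast that nonnegativity as the LMI displayed in the proposition. For the final feasibility claim I will exhibit a certificate---namely $\lambda = 0$ together with a sufficiently large $k$---showing feasibility whenever $\upsilon > g(\bm{x},\bm{0})$.

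First, I would rewrite membership $(\bm{x},\upsilon,k) \in \mathcal{Y}$ as the semi-infinite statement that $F_0(\bm{z}) := g(\bm{x},\bm{z}) - k\bm{z}^\top\bm{z} - \upsilon \leq 0$ on the set $\{\bm{z} : F_1(\bm{z}) \leq 0\}$, where $F_1(\bm{z}) := \bm{z}^\top\bm{z} - r^2$. Since $r>0$ we have $F_1(\bm{0})<0$, so Slater's condition holds and the classical S-lemma yields equivalence to the existence of $\lambda \geq 0$ with $F_0(\bm{z}) \leq \lambda F_1(\bm{z})$ for every $\bm{z}\in\mathbb{R}^{n_z}$. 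Collecting terms this reads
\begin{equation*}
-\|\bm{A}(\bm{x})\bm{z}+\bm{a}(\bm{x})\|_2^2 - \bm{b}(\bm{x})^\top\bm{z} - c(\bm{x}) + (k+\lambda)\bm{z}^\top\bm{z} + \upsilon - \lambda r^2 \geq 0 \quad \forall \bm{z}\in\mathbb{R}^{n_z}.
\end{equation*}
Writing the left-hand side as a quadratic form in the augmented vector $(1,\bm{z}^\top)^\top$, and then running a Schur complement against an identity block $\bm{I}_{n_a}$ to absorb the $\|\bm{A}(\bm{x})\bm{z}+\bm{a}(\bm{x})\|_2^2$ term, produces exactly the $3\times 3$ block LMI stated in the proposition; checking both directions of the Schur equivalence closes the ``if and only if''.

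For the feasibility claim, I would fix $\bm{x}\in\mathcal{X}$ and $\upsilon > g(\bm{x},\bm{0}) = \|\bm{a}(\bm{x})\|_2^2 + c(\bm{x})$ and try the certificate $\lambda = 0$. Taking $k$ strictly larger than the spectral radius of $\bm{A}(\bm{x})^\top\bm{A}(\bm{x})$ makes the block $k\bm{I}_{n_z} - \bm{A}(\bm{x})^\top\bm{A}(\bm{x})$ positive definite, and a second Schur complement collapses the LMI to a scalar inequality of the form
\begin{equation*}
\bigl(\upsilon - g(\bm{x},\bm{0})\bigr) - \bm{v}(\bm{x})^\top\bigl(k\bm{I}_{n_z} - \bm{A}(\bm{x})^\top\bm{A}(\bm{x})\bigr)^{-1}\bm{v}(\bm{x}) \geq 0,
\end{equation*}
with $\bm{v}(\bm{x}) := -\bm{A}(\bm{x})^\top\bm{a}(\bm{x}) - \frac{1}{2}\bm{b}(\bm{x})$. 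The first term is a fixed positive constant while the correction is $O(1/k)$, so any sufficiently large $k$ satisfies the LMI and yields the desired feasible triple. The main obstacle I expect is the sign- and index-bookkeeping in the first step---getting $(k+\lambda)$ rather than $(k-\lambda)$ in the bottom-right block and the correct $-\lambda r^2$ in the middle diagonal entry depends on consistent use of whichever form of the S-lemma one adopts---while everything afterwards is routine linear algebra.
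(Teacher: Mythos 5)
Your proposal is correct and follows essentially the same route as the paper: rewrite the robust constraint as a quadratic form in $(1,\bm z^\top)^\top$, apply the $\mathcal S$-lemma to introduce $\lambda\geq 0$, and use a Schur complement against the $\bm I_{n_a}$ block to obtain the stated LMI. The feasibility certificate is also the same ($\lambda=0$ with $k$ large); the paper verifies it by Schur-complementing against the top-left $2\times 2$ block, whose positive definiteness is exactly $\upsilon > \|\bm a(\bm x)\|_2^2 + c(\bm x)$, while you complement against the bottom-right block and let the $O(1/k)$ correction vanish — an equivalent argument.
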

\begin{proof}{Proof.}
The proof follows from \cite{Bental1998} and is relegated to Appendix \ref{appendix:proof_of_results}.
\end{proof}

For a more general robust satisficing problem, exact tractable reformulations may not exist, and thus approximations would be needed. We next introduce a class of conic representable evaluation functions for which we can derive a tractable safe approximation.


\subsection{Conic representable evaluation function}
To represent the non-piecewise-linear and biconvex objective function, we consider a rather general conic representable evaluation function $g$ of the form
\begin{equation}
\label{eq:convexfunction}
\begin{aligned}
      g(\bm x, \bm z) =\ &\text{minimize} && {\bm{d}}^\top \bm{y}\\
    &\text{subject to} && \bm{B}\bm{y} \succeq_{\mathcal{K}} \bm{f}(\bm{x}) + \bm{F}(\bm{x})\bm{z} \\
     &&&\bm{y} \in \mathbb{R}^{n_{y}},
\end{aligned} 
\end{equation}
where  $\bm{f}: \mathbb{R}^{n_x} \rightarrow \mathbb{R}^{n_f},\;\bm{F}: \mathbb{R}^{n_x} \rightarrow \mathbb{R}^{n_f \times n_z}$ are affine mappings in $\bm{x}$, and $\mathcal{K}$ is a proper cone,  {\em i.e.}, a  full dimensional, closed and pointed convex cone. Since the decision variable $\bm{y}$ in Problem~\eqref{eq:convexfunction} is made after observing $\bm x$ and $\bm z$, we will call $\bm y$ the \textit{recourse} variable. Indeed, if the cone $\mathcal{K}$ is the non-negative orthant, then the $g$ function would represent the second stage optimization problem of a standard two-stage stochastic optimization model. The function $g$ is sufficiently broad enough to represent many types of functions considered in the robust optimization literature, including biconvex linear or quadratic constraint functions \citep{Bental1998},  non-linear biconvex constraint functions \citep{Roos2020,trevor2017robust_sdpsocp}, adaptive linear optimization model \citep{bental2004adjustable,Bertsimas2016} and adaptive convex optimization model \citep{trevor2018dual_twostage}.

\begin{assumption}[Complete and bounded recourse] \label{assm:evalutionfunction} We assume that for any $\bm{v} \in \mathbb{R}^{n_f}$, there exists a $\bm{y} \in \mathbb{R}^{n_y}$ such that~$\bm{B}\bm{y} \succeq_{\mathcal K} \bm{v}$. Moreover, there does not exist $\bm{y} \in \mathbb{R}^{n_y}$ such that $\bm{B}\bm{y} \succeq_{\mathcal{K}} \bm 0$ and $\bm d^\top \bm y<0$. These conditions ensure that Problem~\eqref{eq:convexfunction} is always finite and strictly feasible.
\end{assumption}

We remark that the term {\em complete recourse} is an extension of the same term used in stochastic linear optimization \citep[see, {\em e.g.},][]{birge2011introduction} to our model where the second stage problem is a conic optimization problem. It  ensures that Problem~\eqref{eq:convexfunction} is strictly feasible. To see the strict feasibility, let $\bm v \in \bbr^{n_f}$ such that $\bm{v} \succ_{\mathcal K} \bm{0}$, and under compete recourse, there exists a $\bm{y} \in \mathbb{R}^{n_y}$ such that $\bm{B}\bm{y} \succeq_{\mathcal{K}} \bm{f}(\bm{x}) + \bm{F}(\bm{x})\bm{z} +\bm v \succ_{\mathcal{K}} \bm{f}(\bm{x}) + \bm{F}(\bm{x})\bm{z}$. The bounded recourse condition ensures that the $g$ function of Problem~\eqref{eq:convexfunction} is always finite. 

As we will later show, we require  Assumption~\ref{assm:evalutionfunction} to obtain a tractable safe approximation of the robust satisficing problem. However, it is not  necessarily clear how we can transform a nonlinear evaluation function to satisfy the assumption. 

\begin{proposition} \label{prop:propercone}
Consider a differentiable convex loss function, 
$\ell:\bbr^{n_{w}}\rightarrow \bbr$ whose epigraph
${\rm epi}(\ell)\triangleq\left\{ (\bm w,v) \in \bbr^{n_{w}}\times \bbr\mid \ell(\bm w) \leq v \right\}$ does not contain a line. 
The perspective cone of $\ell$, defined as 
$$
\mathcal{K}_{\ell} \triangleq \left\{ (\bm w,v_1,v_2) \in \mathbb R^{n_w} \times \mathbb R\times \mathbb R_{+} \mid v_2 \ell(\bm w/v_2) \leq v_1\right\},
$$
where
$$
0 \ell(\bm w/0)  \triangleq  \sup_{\bm u \in \bbr^{n_w}}\left\{ \nabla \ell(\bm u)^\top\bm w  \right\}.
$$
is a proper cone, {\em i.e.}, closed, convex, solid and pointed.
\end{proposition}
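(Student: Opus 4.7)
The plan is to verify the four defining properties of a proper cone in turn: (i) cone / closure under nonnegative scaling, (ii) convexity, (iii) closedness, (iv) solidness (nonempty interior), and (v) pointedness. Properties (i), (ii), and (iv) are essentially routine consequences of the fact that $\tilde\ell(\bm w,v_2) \triangleq v_2\,\ell(\bm w/v_2)$ is the perspective of the convex function $\ell$, so I would handle them first and spend most of the effort on closedness and pointedness, which use the differentiability hypothesis and the no-line hypothesis respectively.

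For the cone property, I would simply check the definition: given $(\bm w,v_1,v_2)\in\mathcal K_\ell$ and $\alpha>0$, positive homogeneity of the perspective ($\alpha v_2\,\ell(\alpha\bm w/(\alpha v_2)) = \alpha\,v_2\,\ell(\bm w/v_2)$) yields $(\alpha\bm w,\alpha v_1,\alpha v_2)\in\mathcal K_\ell$, and for the boundary case $v_2=0$ the supremum defining $0\,\ell(\bm w/0)$ is itself positively homogeneous in $\bm w$. Convexity follows because the perspective of a convex function is convex on $\{v_2>0\}$, and the recession-type extension at $v_2=0$ agrees with the lower-semicontinuous hull, so the sublevel set $\{\tilde\ell\le v_1\}$ is convex. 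For solidness, since $\ell$ is finite-valued and differentiable (hence continuous), any $(\bm w_0,v_1^0,v_2^0)$ with $v_2^0>0$ and $v_1^0> v_2^0\,\ell(\bm w_0/v_2^0)$ lies in the interior, by a small perturbation argument.

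The main obstacle is to argue carefully that the extension $0\,\ell(\bm w/0)=\sup_{\bm u}\nabla\ell(\bm u)^\top\bm w$ coincides with the recession function $\ell^\infty(\bm w)$, because this identification is what simultaneously gives closedness of $\mathcal K_\ell$ and links the no-line hypothesis to pointedness. I would establish the two inequalities separately. For $\ell^\infty(\bm w)\ge\sup_{\bm u}\nabla\ell(\bm u)^\top\bm w$, use the gradient inequality $\ell(\bm u+t\bm w)\ge\ell(\bm u)+t\,\nabla\ell(\bm u)^\top\bm w$, divide by $t$, and send $t\to\infty$. For the reverse inequality, use the monotonicity of the directional derivative $t\mapsto\nabla\ell(\bm u_0+t\bm w)^\top\bm w$ (convexity of $\ell$) and the fact that $\ell(\bm u_0+t\bm w)/t\to\ell^\infty(\bm w)$. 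With this identification, closedness of $\mathcal K_\ell$ is exactly the closedness of the epigraph of the closed perspective function.

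Finally, for pointedness, I would show that $(\bm w,v_1,v_2)\in\mathcal K_\ell\cap(-\mathcal K_\ell)$ forces $v_2=0$ (from $v_2\ge 0$ and $-v_2\ge 0$) and then $\ell^\infty(\bm w)\le v_1$ together with $\ell^\infty(-\bm w)\le -v_1$. Adding gives $\ell^\infty(\bm w)+\ell^\infty(-\bm w)\le 0$, while convexity and positive homogeneity of $\ell^\infty$ force $\ell^\infty(\bm w)+\ell^\infty(-\bm w)\ge 2\ell^\infty(\bm 0)=0$, so equality holds. Equality is the condition that $(\bm w,v_1)$ lies in the lineality space of $\mathrm{epi}(\ell)$; since $\mathrm{epi}(\ell)$ contains no line by hypothesis, $\bm w=\bm 0$ and $v_1=0$, proving $\mathcal K_\ell\cap(-\mathcal K_\ell)=\{\bm 0\}$. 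Putting all five pieces together gives that $\mathcal K_\ell$ is a proper cone.
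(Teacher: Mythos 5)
Your proof is correct, but it travels a different road from the paper's. The paper's single device is the gradient-envelope representation $\ell(\bm w)=\max_{\bm u}\{\nabla\ell(\bm u)^\top(\bm w-\bm u)+\ell(\bm u)\}$, which it extends to $v_2\ge 0$ so that $\mathcal K_\ell$ becomes an intersection of closed half-spaces indexed by $\bm u$ (together with $v_2\ge 0$); convexity is then immediate, and pointedness is obtained by an explicit computation: a nonzero element of $\mathcal K_\ell\cap(-\mathcal K_\ell)$ forces $\nabla\ell(\bm u)^\top\bm w=v_1$ for all $\bm u$, whence $\ell(k\bm w)=kv_1+\ell(\bm 0)$ and the epigraph contains the concrete line $\{(k\bm w,kv_1+\ell(\bm 0))\}_{k\in\bbr}$. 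You instead identify $0\,\ell(\bm w/0)=\sup_{\bm u}\nabla\ell(\bm u)^\top\bm w$ with the recession function $\ell^\infty(\bm w)$ (your two-sided argument via the gradient inequality and the monotonicity of $t\mapsto\nabla\ell(\bm u_0+t\bm w)^\top\bm w$ is sound), after which $\mathcal K_\ell$ is the epigraph of the closed perspective and closedness/convexity are textbook facts, and pointedness reduces to the triviality of the lineality space of $\mathrm{epi}(\ell)$. The two arguments hinge on the same underlying object --- the $v_2=0$ slice is the support function of the gradient range, equivalently the recession function --- and the no-line hypothesis enters at the same point; the paper's version is more elementary and self-contained (no recession-cone machinery), while yours is more conceptual and, usefully, actually supplies the closedness and solidness arguments that the paper waves off as ``not difficult to see.''
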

\begin{proof}{Proof.}
It is not difficult to see that $\mathcal{K}_\ell$ is a cone that is closed and solid. 
We will next show that the cone $\mathcal{K}_\ell$ is also convex, and pointed. Since $\ell$ is convex and differentiable, then for any $\bm u, \bm w \in \bbr^{n_w}$, we have 
$$
 \ell(\bm w) \geq   \displaystyle \nabla \ell(\bm u)^\top(\bm w - \bm u) + \ell(\bm u)
 $$
so that 
$$
\ell(\bm w) = \max_{\bm u \in \bbr^{n_w}}\left\{ \nabla \ell(\bm u)^\top(\bm w - \bm u) + \ell(\bm u) \right\},
$$
which allows us to define
\begin{equation}
\label{eq:persepctiverep}
y_2 \ell(\bm w/y_2) \triangleq \sup_{\bm u \in \bbr^{n_w}}\left\{ \nabla \ell(\bm u)^\top(\bm w - y_2\bm u) + y_2\ell(\bm u) \right\},
\end{equation}
for $y_2\geq 0$.
As a result, the cone  can also be expressed as 
$$
\mathcal{K}_\ell = \left\{ (\bm w,v_1,v_2) \in \bbr^{n_w} \times \bbr\times \bbr_{+} \mid \nabla \ell(\bm u)^\top(\bm w - v_2\bm u) + v_2\ell(\bm u) \leq v_1 ~\forall \bm u \in \bbr^{n_w} \right\},
$$
which constitutes an intersection of infinitely many half spaces. The cone $\mathcal{K}_\ell$ is thus manifestly convex. We will show by contradiction that the cone is also pointed. Suppose otherwise, there would exist $(\bm w, v_1, v_2)\neq \bm 0$ such that $(\bm w, v_1, v_2) \in \mathcal K_\ell$ and $-(\bm w, v_1, v_2) \in \mathcal K_\ell$. Hence, we must have $v_2 = 0$,
and $(\bm w, v_1)\neq \bm 0$ such that
$$
\begin{array}{rcl}
\sup\limits_{\bm u \in \bbr^{n_w}}\left\{ \nabla \ell(\bm u)^\top\bm w  \right\} &\leq& v_1\\
\sup\limits_{\bm u \in \bbr^{n_w}}\left\{ -\nabla \ell(\bm u)^\top\bm w  \right\} &\leq& -v_1,
\end{array}
$$
or equivalently 
$$
 \nabla \ell(\bm u)^\top\bm w  = v_1 \qquad \forall \bm u \in \bbr^{n_w}.
$$
Hence, for all $k \in \bbr$,
$$
\ell(k\bm w) = \max_{\bm u \in \bbr^{n_w}}\left\{ \nabla \ell(\bm u)^\top(k\bm w - \bm u) + \ell(\bm u) \right\} = kv_1 + \max_{\bm u \in \bbr^{n_w}}\left\{ \nabla \ell(\bm u)^\top(- \bm u) + \ell(\bm u) \right\} = kv_1 + \ell(\bm 0), 
$$
which contradicts that the epigraph of $\ell$ does not contain a line because
$(k\bm w,kv_1 + \ell(\bm 0)) \in {\rm epi}(\ell)$ for all $k \in \bbr$. 
\qed
\end{proof}

Using the perspective of convex function technique, we can express the function as a conic minimization problem as follows
\begin{equation}
\label{eq:perspectivefunctionrep}
\begin{array}{rcll}
\ell(\bm w) = &{\rm minimize}& y \\
&{\rm subject~to} & \left[ \begin{array}{c} \bm 0\\1 \\ 0\end{array}\right] y  \succeq_{\mathcal{K}_\ell}  \left[ \begin{array}{c} - \bm w\\  0 \\-1  \end{array}\right],\\
&& y \in \bbr.
\end{array}  
\end{equation}

We observe that given affine mappings  $\bm{w}: \mathbb{R}^{n_x} \rightarrow \mathbb{R}^{n_w},\;\bm{W}: \mathbb{R}^{n_x} \rightarrow \mathbb{R}^{n_w \times n_z}$,  the evaluation function $g(\bm x,\bm z) = \ell(\bm{w}(\bm{x}) + \bm{W}(\bm{x})\bm{z})$ would be conic representable. However, the condition of complete recourse required in Assumption~\ref{assm:evalutionfunction} does not hold because there does not exist $y \in \bbr$ such that 
$$
\left[ \begin{array}{c} \bm 0\\1 \\ 0\end{array}\right] y  \succeq_{\mathcal{K}_\ell}  \left[ \begin{array}{c}  \bm 0\\  0 \\1  \end{array}\right].
$$
In our computation study, we have verified that the direct application of the perspective of convex function approach does not lead to a feasible tractable safe approximation problem. 

\subsection{Perspective casting}
To resolve the feasibility issue, we introduce a novel {\em perspective casting} approach to provide a conic optimization model that would satisfy Assumption~\ref{assm:evalutionfunction}, and that its objective would match $\ell(\bm w)$ when evaluated over an {\em featured domain}, $\bm w \in \mathcal W \subseteq \bbr^{n_w}$.

\begin{definition}
The {\em perspective depth} of a differentiable convex function $\ell:\bbr^{n_w}\rightarrow \bbr$ on a {\em featured domain}, $\mathcal{W} \in \mathbb{R}^{n_w}$  is defined as  
$$
\bar{P}_{\ell,\mathcal W} \triangleq  \sup_{\bm u \in \mathcal W} \left\{
\nabla \ell(\bm u)^\top\bm u - \ell(\bm u)
\right\}.
$$
\end{definition}

\begin{figure}[hbt!]
    \centering 
    \includegraphics[scale=0.40]{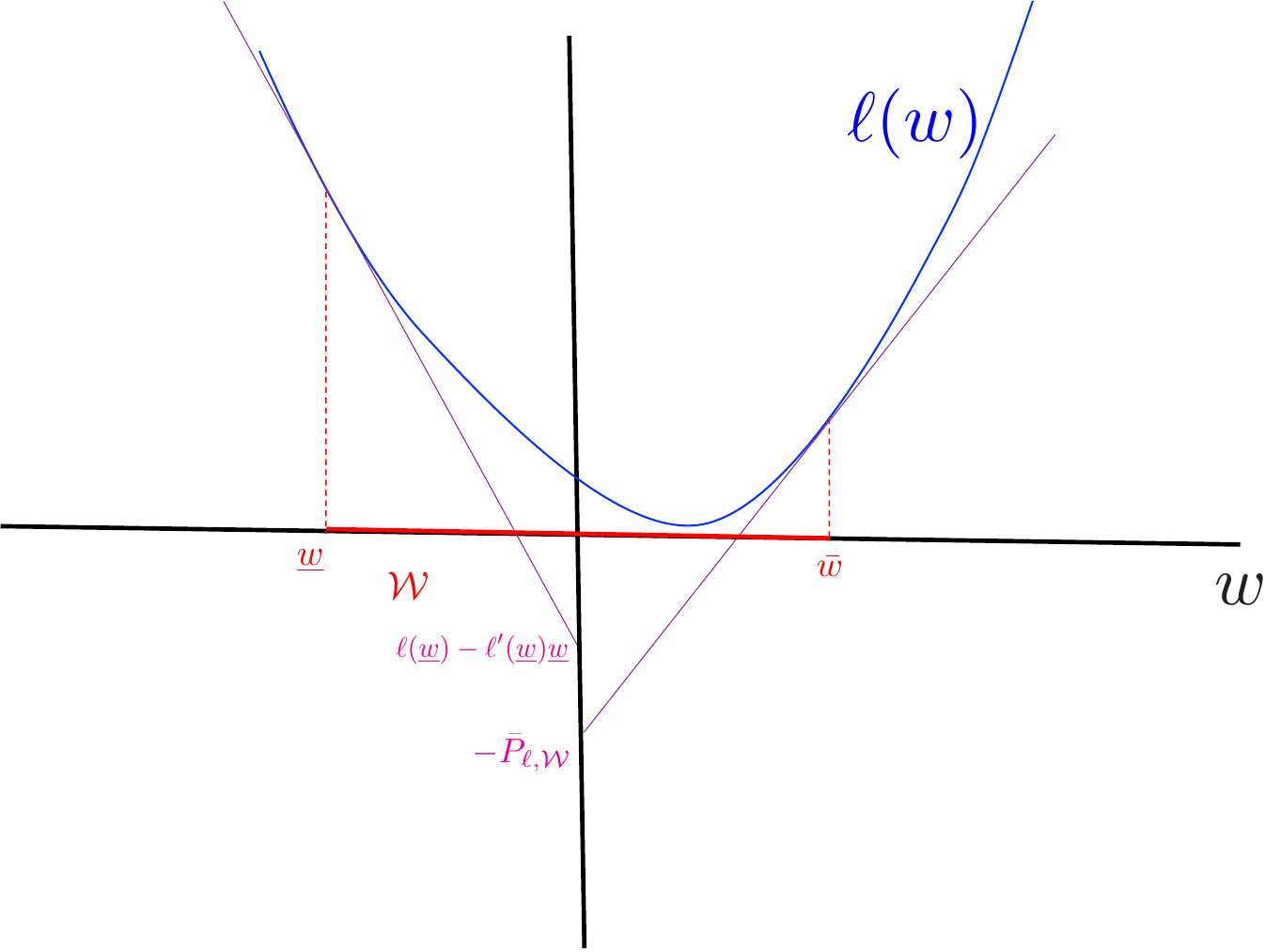}
    \caption{An illustration of perspective depth on $\mathcal W = [\underline w,\bar w]$.}
    \label{fig:perspectivedepth}
\end{figure}

\begin{proposition}
\label{prop:BigM}
For a univariate differentiable convex loss function $\ell: \bbr \rightarrow \bbr$ and a featured domain $\mathcal W = [\underline{w},\bar{w}]$, 
$$
 \bar P_{\ell, \mathcal W} = \max_{ v \in \{\underline w,\bar w\}} \left\{ \ell '( v)v - \ell( v) \right\}.
$$
\end{proposition}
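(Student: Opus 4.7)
My plan is to show that the function $h(u) := \ell'(u)u - \ell(u)$ is quasi-convex on $\mathbb{R}$ with its minimum at $u = 0$, which immediately implies that its supremum over any compact interval $[\underline{w},\bar{w}]$ is attained at one of the two endpoints.

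The strategy is cleanest if we first imagine the $C^2$ case: then $h'(u) = \ell''(u)u + \ell'(u) - \ell'(u) = \ell''(u)u$, and convexity of $\ell$ gives $\ell'' \geq 0$, so $h$ is non-increasing on $(-\infty,0]$ and non-decreasing on $[0,\infty)$. This motivates the result, but because Proposition~\ref{prop:propercone} and the surrounding development only assume $\ell$ is convex and differentiable, I will establish the two monotonicity facts directly from the first-order convexity inequality.

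For any $0 \leq u_1 < u_2$, the subgradient inequality $\ell(u_1) \geq \ell(u_2) + \ell'(u_2)(u_1 - u_2)$ can be rearranged into
\begin{equation*}
h(u_2) - h(u_1) \;\geq\; \ell'(u_2)u_2 - \ell'(u_1)u_1 + \ell'(u_2)(u_1 - u_2) \;=\; u_1\bigl(\ell'(u_2) - \ell'(u_1)\bigr) \;\geq\; 0,
\end{equation*}
where the last inequality uses $u_1 \geq 0$ together with the fact that $\ell'$ is non-decreasing (convexity of $\ell$). Symmetrically, for $u_1 < u_2 \leq 0$, applying $\ell(u_2) \geq \ell(u_1) + \ell'(u_1)(u_2 - u_1)$ gives $h(u_1) - h(u_2) \geq u_2(\ell'(u_1) - \ell'(u_2)) \geq 0$, since $u_2 \leq 0$ and $\ell'(u_1) \leq \ell'(u_2)$.

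With these two monotonicity facts in hand, I conclude by splitting into cases on the location of $0$ relative to $[\underline{w},\bar{w}]$: if $\underline{w} \geq 0$, $h$ is non-decreasing on the interval and the maximum sits at $\bar{w}$; if $\bar{w} \leq 0$, $h$ is non-increasing and the maximum sits at $\underline{w}$; and if $\underline{w} < 0 < \bar{w}$, $h$ is non-increasing on $[\underline{w},0]$ and non-decreasing on $[0,\bar{w}]$, so the maximum is one of $h(\underline{w})$ or $h(\bar{w})$. In every case, $\bar{P}_{\ell,\mathcal{W}} = \max_{v \in \{\underline{w},\bar{w}\}}\{\ell'(v)v - \ell(v)\}$. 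The only mild obstacle is avoiding an appeal to $\ell''$, which is handled by the two subgradient estimates above; everything else is a bookkeeping of cases.
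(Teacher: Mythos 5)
Your proof is correct and follows essentially the same route as the paper's: both establish that $h(v)=\ell'(v)v-\ell(v)$ is non-decreasing on $[0,\infty)$ and non-increasing on $(-\infty,0]$ using only the first-order convexity inequality together with monotonicity of $\ell'$, and then conclude by the same three-way case split on where $0$ sits relative to $[\underline w,\bar w]$. The only cosmetic difference is the order in which the two convexity facts are applied in the chain of inequalities.
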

\begin{proof}{Proof.}
The proof is relegated to Appendix \ref{appendix:proof_of_results}.
\end{proof}

Specking intuitively, the perspective depth refers to the distance of the closest illuminating point directly below the origin,  so that the illuminating point would be visible from everywhere on the  surface of the opaque epigraph of $\ell$ that can be mapped from the featured domain,  $\mathcal{W}$ (see Figure~\ref{fig:perspectivedepth}). 

\begin{theorem}[Perspective casting] \label{thm:convextoconic}
Consider a differentiable convex loss function $\ell:\bbr^{n_w}\rightarrow \bbr$ whose epigraph does not contain a line, and a featured domain $\mathcal{W} \subseteq \mathbb{R}^{n_w}$. 
For a given  $P\geq \bar{P}_{\ell,\mathcal W}$, let   
\begin{equation}
\label{eq:convextoconic}
\begin{array}{rcll}
\hat{\ell}(\bm w) = &{\rm minimize}& y_1+ Py_2 \\
&{\rm subject~to} & \underbrace{\left[ \begin{array}{cc} 
\bm 0& \bm 0 \\
1&0 \\ 
0&1\\
0&1
\end{array}\right]}_{\bm{B}} \left[\begin{array}{c} y_1 \\y_2 \end{array}\right]  \succeq_{\mathcal{K}}  \left[ \begin{array}{c}  - \bm w\\  -P \\0\\
1\end{array}\right]\\
&& \bm y \in \bbr^2,
\end{array}  
\end{equation}
where 
$\mathcal{K} = \mathcal K_\ell \times \bbr_+$. Then
$$
\begin{array}{ll}
\hat{\ell}(\bm w) \leq \ell(\bm w) &~~~~\forall  \bm w \in \mathcal \bbr^{n_w}, \\
\hat{\ell}(\bm w) = \ell(\bm w) &~~~~ \forall  \bm w \in \mathcal W.
\end{array}
$$
Moreover, Problem~\eqref{eq:convextoconic} has a complete and bounded recourse. 
\end{theorem}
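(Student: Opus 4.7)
My plan is to first unpack the conic constraints that define Problem~\eqref{eq:convextoconic} into a one-dimensional convex program, and then read off all four statements from elementary arguments on that univariate reduction. With $\mathcal{K} = \mathcal{K}_\ell \times \bbr_+$, the constraint $\bm{B}(y_1, y_2)^\top \succeq_{\mathcal{K}} (-\bm{w}, -P, 0, 1)$ partitions into $(\bm{w}, y_1 + P, y_2) \in \mathcal{K}_\ell$ together with $y_2 \geq 1$. By the definition of the perspective cone, the first condition is equivalent to $y_2 \geq 0$ and $y_2 \ell(\bm{w}/y_2) \leq y_1 + P$; eliminating $y_1$ at optimality yields
$$
\hat{\ell}(\bm{w}) \;=\; \min_{t \geq 1} \big\{ t\,\ell(\bm{w}/t) + P(t-1) \big\},
$$
whose univariate objective $h(t) := t\,\ell(\bm{w}/t) + P(t-1)$ is convex on $t>0$ by the standard convexity of the perspective.

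From this reduction, the inequality $\hat{\ell}(\bm{w}) \leq \ell(\bm{w})$ on $\bbr^{n_w}$ is immediate: plug in $t=1$ to obtain $h(1)=\ell(\bm{w})$. For the equality on the featured domain, I would compute
$$
h'(1) \;=\; \ell(\bm{w}) - \nabla \ell(\bm{w})^\top \bm{w} + P,
$$
and invoke the hypothesis $P \geq \bar{P}_{\ell,\mathcal{W}} \geq \nabla \ell(\bm{w})^\top \bm{w} - \ell(\bm{w})$ valid for $\bm{w} \in \mathcal{W}$, which yields $h'(1) \geq 0$. Convexity of $h$ then forces the minimum over $t \geq 1$ to be attained at $t = 1$, so $\hat{\ell}(\bm{w}) = h(1) = \ell(\bm{w})$.

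Complete recourse is a straightforward construction: for an arbitrary right-hand side $(\bm{v}_0, v_1, v_2, v_3) \in \bbr^{n_w}\times \bbr^3$, take $y_2$ large enough that $y_2 > v_2$ and $y_2 \geq v_3$; then the containment $(-\bm{v}_0, y_1 - v_1, y_2 - v_2) \in \mathcal{K}_\ell$ collapses to the scalar inequality $(y_2-v_2)\,\ell\big(-\bm{v}_0/(y_2-v_2)\big) \leq y_1 - v_1$, which is easily enforced by choosing $y_1$ sufficiently large.

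For bounded recourse, the key fact I need is $P \geq -\ell(\bm{0})$. This follows from the subgradient inequality $\ell(\bm{0}) \geq \ell(\bm{u}) + \nabla \ell(\bm{u})^\top(\bm{0} - \bm{u})$ applied at any $\bm{u} \in \mathcal{W}$: rearranging gives $\nabla \ell(\bm{u})^\top \bm{u} - \ell(\bm{u}) \geq -\ell(\bm{0})$, so taking the supremum over $\mathcal{W}$ yields $P \geq \bar{P}_{\ell,\mathcal{W}} \geq -\ell(\bm{0})$. Now, if $\bm{B}(y_1, y_2)^\top \succeq_{\mathcal{K}} \bm{0}$, then $y_2 \geq 0$ and $(\bm{0}, y_1, y_2) \in \mathcal{K}_\ell$ forces $y_1 \geq y_2 \ell(\bm{0})$, hence $y_1 + P y_2 \geq y_2 (\ell(\bm{0}) + P) \geq 0$, precluding a negative-objective recession direction. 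The main subtlety I anticipate is ensuring the boundary case $y_2 = 0$ is handled consistently with the definition $0\cdot \ell(\bm{w}/0) = \sup_{\bm{u}} \nabla \ell(\bm{u})^\top \bm{w}$, so that the univariate reduction and the bounded-recourse inequality both remain valid on the boundary of $\mathcal{K}$.
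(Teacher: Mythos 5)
Your proof is correct, and the skeleton is the same as the paper's: both reduce the conic program to the univariate problem $\hat{\ell}(\bm w)=\min_{t\ge 1}\{t\,\ell(\bm w/t)+P(t-1)\}$, read off $\hat\ell\le\ell$ from $t=1$, and then argue that $t=1$ is optimal on $\mathcal W$. Where you genuinely diverge is in that last step and in bounded recourse. The paper proves the equality on $\mathcal W$ by replacing $t\,\ell(\bm w/t)$ with its supporting-hyperplane representation $\sup_{\bm u}\{\nabla\ell(\bm u)^\top(\bm w-t\bm u)+t\ell(\bm u)\}$, restricting the supremum to $\mathcal W$, and observing that $P\ge\bar P_{\ell,\mathcal W}$ makes the coefficient of $t$ nonnegative term by term; you instead compute $h'(1)=\ell(\bm w)-\nabla\ell(\bm w)^\top\bm w+P\ge 0$ and invoke convexity of the perspective in $t$. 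Your argument is more local and slightly sharper: it shows that equality at a given $\bm w$ only requires $P\ge\nabla\ell(\bm w)^\top\bm w-\ell(\bm w)$ at that one point, whereas the paper's sandwich needs the uniform bound over $\mathcal W$ to push the supremum through. For bounded recourse the paper bounds $y_2\ell(\bm 0/y_2)$ via the same conjugate representation and the chain $\sup_{\bbr^{n_w}}\ge\inf_{\mathcal W}$, while you extract the cleaner scalar fact $P\ge\bar P_{\ell,\mathcal W}\ge-\ell(\bm 0)$ from the subgradient inequality at $\bm 0$ and conclude $y_1+Py_2\ge y_2(\ell(\bm 0)+P)\ge 0$; both arguments implicitly need $\mathcal W\neq\emptyset$, and your explicit treatment of the boundary case $y_2=0$ (where $0\,\ell(\bm 0/0)=0$ forces $y_1\ge 0$) is a detail the paper glosses over. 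The complete-recourse constructions are essentially identical.
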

\begin{proof}{Proof.}
Observe that  Problem~\eqref{eq:convextoconic} is equivalent to the following optimization problem
$$
\hat{\ell}(\bm w) = 
\min_{y_2 \geq 1}\left\{  Py_2 +y_2\ell(\bm w/y_2) -P\right\},
$$
hence 
$$
\hat{\ell}(\bm w) \leq \ell(\bm w) \qquad \forall  \bm w \in \bbr^{n_w}. 
$$
For any $\bm{w} \in \mathcal{W}$, we have
$$
\begin{array}{rcl}
\hat{\ell}(\bm w) &=& \displaystyle \min_{y_2\geq 1}\left\{  Py_2 +y_2\ell(\bm w/y_2) -P\right\}\\
&=&  \displaystyle  
\adjustlimits \min_{y_2\geq 1}\sup_{\bm u \in \bbr^{n_w}} \left\{  Py_2 +
\nabla \ell(\bm u)^\top(\bm w - y_2\bm u) + y_2\ell(\bm u) -P\right\}\\
&\geq&  \displaystyle  
\adjustlimits \min_{y_2\geq 1}\sup_{\bm u \in  \mathcal{W}} \left\{  Py_2 +
\nabla \ell(\bm u)^\top(\bm w - y_2\bm u) + y_2\ell(\bm u) -P\right\}\\
&=&  \displaystyle  
\adjustlimits \min_{y_2\geq 1}\sup_{\bm u \in \mathcal{W}} \left\{  y_2(P - 
\nabla \ell(\bm u)^\top\bm u + \ell(\bm u))+ 
\nabla \ell(\bm u)^\top\bm w   -P\right\}\\
&=&  \displaystyle  
\sup_{\bm u  \in \mathcal{W}} \left\{  
\nabla \ell(\bm u)^\top(\bm w -\bm u) + \ell(\bm u)) \right\}\\
&=&  \ell(\bm w),
\end{array}
$$
where the first inequality is due to~\eqref{eq:persepctiverep} and the second-to-last equality is due to having $y_2=1$ at optimality because  
$$
P \geq  
\nabla \ell(\bm u)^\top\bm u - \ell(\bm u)
$$
for all $\bm u \in \mathcal{W}$. Finally, to show that Problem~\eqref{eq:convextoconic} has a complete recourse, consider any $\bm w \in \bbr^{n_w}, \bm v \in \bbr^3$ and observe that 
$$
\bm{B}\left[ \begin{array}{c}   y_1 \\y_2\end{array}\right] = 
\left[ \begin{array}{ccc} 
\bm 0& \bm 0 \\
1&0 \\ 
0&1\\
0&1\\
\end{array}\right] \left[ \begin{array}{c}   y_1 \\y_2\end{array}\right]
=
\left[ \begin{array}{c}   \bm 0 \\y_1 \\y_2 \\y_2 \end{array}\right]
\succeq_{\mathcal{K}}  \left[ \begin{array}{c}  \bm w\\  v_1 \\v_2
\\v_3 \end{array}\right]
$$
is feasible for
\begin{equation*}
\begin{aligned}
    &y_1 = v_1 + \left[ (v_3 - v_2)^+ +1 \right] \ell \left( -\frac{\bm{w}}{(v_3 - v_2)^++1}\right) \\
    &y_2 = \max \{ v_2, v_3 \}+1. 
    \end{aligned}
\end{equation*}
To show that the problem has a bounded recourse, observe that for any $\bm y \in \bbr^2, \bm B \bm y \in \mathcal K$,  we have  $y_2 \geq 0$ and
$$
\begin{array}{rcl}
 Py_2 + y_1 &\geq&  Py_2 +y_2\ell(\bm 0/y_2) \\
&=&Py_2 + \sup\limits_{\bm u \in \bbr^{n_w}}\left\{ \nabla \ell(\bm u)^\top(\bm 0 - y_2\bm u) + y_2\ell(\bm u) \right\}\\
&=&Py_2 + y_2\sup\limits_{\bm u \in \bbr^{n_w}}\left\{ -\nabla \ell(\bm u)^\top\bm u + \ell(\bm u) \right\}\\
&\geq&Py_2 + y_2\inf\limits_{\bm u \in \mathcal W}\left\{ -\nabla \ell(\bm u)^\top\bm u + \ell(\bm u) \right\}\\
&=&\left(P-\sup\limits_{\bm u \in \mathcal W}\left\{ \nabla \ell(\bm u)^\top\bm u - \ell(\bm u) \right\} \right)y_2\geq 0,
\end{array}
$$
where the first equality again follows from~\eqref{eq:persepctiverep}. Hence, the proof is completed.
\qed
\end{proof}

We note that the Huber loss function with parameter $\delta>0$ has a finite perspective depth of $\delta^2/2$ on its entire domain, {\em i.e.}, $\mathcal W=\bbr$. However, although the Huber function is Lipschitz continuous, it does not imply that any differentiable convex Lipschitz continuous function would have finite perspective depth on $\mathcal W=\bbr$, as exemplified in the following function:   
$$
\ell(w) = |w| -\log(|w|+1),
$$
under which $\bar{P}_{\ell, \bbr} \geq \lim_{w \uparrow \infty} \ell'(w)w - \ell(w) = \infty$.

\begin{example}
The log-sum-exponential function commonly used in logistic regression,
$$
\ell(\bm w) = \log\left(\sum_{i \in [n_w]}\exp(w_i) \right), 
$$
has a finite perspective depth of zero on its entire domain $\mathcal W = \bbr^{n_w}$. To see this, let 
 $$
 r(t) = \ell(\bm u t)
 $$
for some  $\bm u \in \bbr^{n_w}$. Observe that 
$$
r'(1) -r(1) = \nabla \ell(\bm u)^\top\bm u - \ell(\bm u).
$$
Since $r$ is convex and differentiable in $t$, from Proposition~\ref{prop:BigM}, we have 
$$
r'(1) -r(1) \leq \max\left\{r'(0)\times 0 -r(0), \lim_{t\uparrow \infty}\{ r'(t)t -r(t) \} \right\} =
\max\left\{-\log(n_w), \lim_{t\uparrow \infty}\{ r'(t)t -r(t) \} \right\}.
$$
However,
$$
\begin{array}{rcl}
\lim\limits_{t \uparrow \infty}\{r'(t)t -r(t)\} &=& \lim\limits_{t \uparrow \infty} \log\left( \frac{\exp\left(\frac{\sum\limits_{i \in [n_w]} \exp(u_i t) u_i t }{\sum\limits_{i \in [n_w]}\exp(u_i t)}\right)  }{\sum\limits_{i \in [n_w]} \exp(u_i t) } \right)\\
&=& \lim\limits_{t \uparrow \infty} \log\left( \frac{\exp\left(\frac{\sum\limits_{i \in [n_w]} \exp((u_i-\bar u) t) u_i t }{\sum\limits_{i \in [n_w]}\exp((u_i-\bar u) t)}\right)  }{\exp(\bar u t)\sum\limits_{i \in [n_w]} \exp((u_i-\bar u) t) } \right)\\
&=& \lim\limits_{t \uparrow \infty} \log\left( \frac{\exp\left(\frac{\bar n \bar u t}{\bar n}\right)  }{\bar n \exp(\bar u t) } \right) = -\log(\bar n),
\end{array}
$$
where $\bar u = \max_{i\in [n_w]}\{u_i\}$ and $\bar{n} = |\{ i \in [n_w] | u_i = \bar u \} |$.

Hence, for any $\bm u \in \bbr^{n_w}$
$$
\nabla \ell(\bm u)^\top\bm u - \ell(\bm u) \leq \max\{-\log(n_w),-\log(\bar n) \} \leq 0,
$$
and that the bound is tight when $\bar n=1$, {\em i.e.}, $\bm u$ has a unique maximum.
\end{example}

\begin{example}[Disutility function]
\label{example:convexrecourse}
We can consider the disutility function of the cost associated with a two-stage linear optimization problem with complete recourse as follows
$$
\begin{aligned}
 g(\bm x, \bm z) =\ &\text{minimize} && \ell\left(\bm c^\top \bm x +  {\bm{d}}^\top \bm{y}\right)\\
    &\text{subject to} && \bm {D}\bm{y} \geq \bm{a}(\bm{x}) + \bm{A}(\bm{x})\bm{z} \\
     &&&\bm{y} \in \mathbb{R}^{n_{y}},
\end{aligned}
$$
for some differentiable convex and increasing function $\ell:\mathbb R \rightarrow \mathbb R \cup \{\infty\}$, representing disutility, some matrix $\bm D \in \mathbb{R}^{n_a \times n_y}$ and affine mappings $\bm{a}: \mathbb{R}^{n_x} \rightarrow \mathbb{R}^{n_a},\;\bm{A}: \mathbb{R}^{n_x} \rightarrow \mathbb{R}^{n_a \times n_z}$. Hence, for any $\bm v \in \bbr^{n_{a}}$ there exists $\bm y \in \bbr^{n_y}$ such that $\bm D \bm y \geq \bm v$. Suppose we can find $\underline w$ and $\bar w$ such that 
$$
\underline w  \leq \ell^{-1}(g(\bm x,\bm z)) \leq \bar w \qquad \forall \bm x \in \cx, \bm z \in \cz. 
$$
From Proposition~\ref{prop:BigM}, we choose $P\in \bbr$ such that
$$
P \geq \max \{ \ell'( \underline{w})\underline{w} - \ell( \underline{w}), \ell'( \bar{w})\bar{w} - \ell( \bar{w}) \}.
$$
Then, according to Theorem~\ref{thm:convextoconic}, we would have the following conic representation.
$$
\begin{array}{rcll}
 g(\bm x, \bm z) = &{\rm minimize}& v_1 + P v_2  \\
&{\rm subject~to} & \left[\begin{array}{cccc} 
\bm d^\top & 0&  0\\
\bm 0^\top& 1&0 \\ 
\bm 0^\top& 0&1 \\ 
\bm 0^\top& 0&1\\
\bm D & 0 &0
\end{array}\right]
\left[ \begin{array}{c} \bm y \\
v_1 \\ v_2 \end{array}\right]
 \succeq_{\mathcal{K}}  \left[ \begin{array}{c}  - \bm c^\top \bm x \\  -P \\0\\1
\\
 \bm{a}(\bm{x}) 
 \end{array}\right]+ 
 \left[ \begin{array}{c}  \bm 0^\top \\  \bm 0^\top \\\bm 0^\top\\
 \bm 0^\top\\
  \bm{A}(\bm{x})
 \end{array}\right]\bm{z}
 \\
&&\bm y \in \bbr^{n_y}, \bm v \in \bbr^2
\end{array}  
$$
where 
$\mathcal{K} = \mathcal K_{\ell}  \times \mathbb R_+  \times \bbr^{n_{a}}_+.$
\end{example}

\subsection{Polyhedral penalty function and support}{\label{subsec:Polyhedral penalty}}

 Under the  conic representable evaluation function,  we can express the feasible set
\begin{equation}
\label{opt:cone_satisficing}
\mathcal Y = \left\{ (\bm x, \upsilon  , k) \in \cx \times \bbr \times \bbr_+ \left|
\begin{array}{ll}
\exists  \bm{y}\in \mathcal{R}^{n_{z},n_{y}}:\\
  \bm{d}^\top\bm{y}(\bm{z}) -  k p(\bm z) \leq \upsilon & \forall \bm{z} \in \mathcal{Z} \\
\bm{B}\bm{y}(\bm{z}) \succeq_{\mathcal{K}} \bm{f}(\bm{x}) + \bm{F}(\bm{x})\bm{z} & \forall \bm{z} \in \mathcal{Z} 
\end{array} \right. \right\}
\end{equation}
where $\mathcal{R}^{m,n}$ denotes the family of all functions from $\mathbb{R}^m$ to $\mathbb{R}^n$, {\em i.e.,}
$$
\mathcal{R}^{m,n}=\{ \bm y~|~ \bm y: \bbr^m \rightarrow \bbr^n\}.   
$$
The feasible set remains intractable even if we restrict the recourse $\bm y$ to a static function that does not depend on $\bm z$. As we will reveal, we overcome this challenge by using a technique of dualizing twice similar to that in \cite{Roos2020}, first over the recourse variables $\bm{y}$ and then over the uncertain parameters $\bm{z}$ to absorb the conic nature of the original problem into a new uncertainty set while the polyhedral support and penalty show up as linear constraints of the resultant formulation. We can thus use familiar approximation methods such as affine recourse adaptation to obtain tractable feasible set $\bar{\mathcal Y} \subseteq \mathcal Y$. Although feasibility is not guaranteed with such approximations even under assumptions of complete recourse \citep[see, {\em e.g.},][]{bertsimas2019adaptive},  our results however show that whenever $\upsilon > g(\bm x, \bm 0)$, the {\em affine dual recourse adaptation} would always yield a feasible solution, $(\bm x, \upsilon,k) \in \bar{\mathcal Y}$ for some $k\in \bbr_+$. To do so, we require the following assumptions on the  penalty function and support, 
\begin{assumption}[Polyhedral support and penalty] \label{assm:penaltysupport}
We assume the following:
\begin{itemize}
\item[(i)] 
The support set $\mathcal{Z}$ is a nonempty polyhedron 
$$\mathcal Z = \{ \bm{z} \in \mathbb{R}^{n_z} \mid  \bm{H}\bm{z} \leq \bm{h}\},$$ for some $\bm{H} \in \mathbb{R}^{n_h \times n_z}$ and $\bm{h} \in \mathbb{R}_+^{n_h}$. 
\item[(ii)] 
The polyhedral penalty function 
$p(\bm \zeta): \bbr^{n_z}\rightarrow \bbr_+$ can be expressed as 
$$
p(\bm \zeta) = \max_{(\bm \lambda, \eta) \in \mathcal V} \{ \bm \lambda^\top \bm \zeta - \eta\},
$$
where $\mathcal V$ is a polyhedral  
$$
\mathcal V = \{ (\bm \lambda , \eta)  \in \bbr^{n_z} \times \bbr_+ \mid \exists \bm \mu \in \bbr^{n_\mu} ~:~ \bm M \bm \lambda + \bm N \bm \mu  + \bm s \eta \leq \bm t   \},
$$ for some $\bm{M} \in \mathbb{R}^{n_m \times n_z}$, $\bm{N} \in \mathbb{R}^{n_m \times n_\mu}$, and $\bm{s}, \bm t \in \mathbb{R}^{n_m}$. In addition, (i) $\mathcal V$ contains the origin so that $p(\bm \zeta)\geq 0$ and $p(\bm 0)=0$, (ii) there exists $\hat{\bm \mu} \in \bbr^{n_\mu}$ such that $\bm{N}\hat{\bm{\mu}} < \bm{t}$, which ensures $p(\bm \zeta)>0$ if $\bm \zeta \neq \bm 0$, and (iii) $\mathcal V$ is bounded, which ensures $p(\bm \zeta)< \infty$. 
\end{itemize}
\end{assumption}
It is common to choose a polyhedral norm as the penalty function, which has a similar representation.
\begin{proposition}[Polyhedral norm]\label{prop:polypenaltynorm}
Under Assumption~\ref{assm:penaltysupport}, if the penalty function $p$ is a norm, then it has the representation
\begin{equation*}
    p(\bm{\zeta}) = \max_{\bm\lambda \in \bbr^{n_z}, \bm\mu \in \bbr^{n_\mu}} \left\{ \bm\lambda^\top\bm{\zeta} \mid \bm{M}\bm\lambda + \bm{N}\bm\mu \leq \bm{t}  \right\},
\end{equation*}
for which
$$
 \left\{ \bm \lambda \mid \bm{M}\bm\lambda + \bm{N}\bm\mu \leq \bm{t}  \right\}
=
\left\{ \bm \lambda  \mid -\bm{M}\bm\lambda + \bm{N}\bm\mu \leq \bm{t}  \right\}.
$$
Its dual norm is given by
\begin{equation*}
\begin{aligned}
    p^\star(\bm{\zeta}) &=   \min_{\bm\mu \in \bbr^{n_\mu}, \delta \in \bbr_+} \left\{ \delta \mid \bm{M\zeta} + \bm{N\mu} \leq \delta \bm{t} \right\}.
\end{aligned} 
\end{equation*}
\end{proposition}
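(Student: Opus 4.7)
The plan is to recognize the given representation of $p$ as the Fenchel conjugate of an auxiliary polyhedral convex function, and then to invoke the classical fact that the conjugate of a norm is the indicator of the dual unit ball. The argument then splits naturally into showing (a) that the extra $\eta$ variable can be dropped, (b) that the resulting description of the dual unit ball $B^\star$ is symmetric, and (c) that the Minkowski gauge of $B^\star$ yields the claimed formula for $p^\star$.

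First I would define $q:\bbr^{n_z}\to\bbr\cup\{+\infty\}$ by
\begin{equation*}
q(\bm\lambda)\;:=\;\inf\{\eta\geq 0\,:\, \exists\bm\mu\in\bbr^{n_\mu},\;\bm M\bm\lambda+\bm N\bm\mu+\bm s\eta\leq \bm t\},
\end{equation*}
with $q(\bm\lambda)=+\infty$ when the set is empty. Under Assumption~\ref{assm:penaltysupport}(ii), $\mathcal V$ is a bounded polyhedron containing $(\bm 0,0)$ and admitting the Slater-type point $\hat{\bm\mu}$, so $q$ is proper, convex, polyhedral and lower semicontinuous, with $q(\bm 0)=0$. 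The definition of $p$ in Assumption~\ref{assm:penaltysupport}(ii) can be rewritten as $p(\bm\zeta)=\sup_{\bm\lambda}\{\bm\lambda^\top\bm\zeta-q(\bm\lambda)\}=q^\star(\bm\zeta)$; biconjugacy of proper lsc convex functions then gives $p^\star=q^{\star\star}=q$.

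Next I would use the hypothesis that $p$ is a norm. For any norm $p$, standard convex-analysis theory identifies its Fenchel conjugate with the indicator of the unit ball of the dual norm: $p^\star(\bm\lambda)=0$ when $\bm\lambda\in B^\star:=\{\bm\lambda:p^\star(\bm\lambda)\leq 1\}$ and $+\infty$ otherwise. Combined with $p^\star=q$, this forces
\begin{equation*}
B^\star \;=\; \{\bm\lambda : q(\bm\lambda)=0\}\;=\;\{\bm\lambda : \exists\bm\mu,\;\bm M\bm\lambda+\bm N\bm\mu\leq \bm t\},
\end{equation*}
so the value $\eta=0$ is always admissible over $B^\star$. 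Since $p$ is in turn the support function of $B^\star$, the claimed representation $p(\bm\zeta)=\max\{\bm\lambda^\top\bm\zeta:\bm M\bm\lambda+\bm N\bm\mu\leq\bm t\}$ follows. The symmetry identity $B^\star=-B^\star$, which is immediate from $p(-\bm\zeta)=p(\bm\zeta)$, yields the stated equivalent description with the sign of $\bm M\bm\lambda$ flipped.

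Finally, the formula for $p^\star$ is obtained by writing the dual norm as the Minkowski gauge of $B^\star$:
\begin{equation*}
p^\star(\bm\zeta)\;=\;\inf\{\delta>0 : \bm\zeta/\delta\in B^\star\}\;=\;\inf\{\delta>0 : \exists\bm\mu',\;\bm M(\bm\zeta/\delta)+\bm N\bm\mu'\leq\bm t\},
\end{equation*}
after which the substitution $\bm\mu=\delta\bm\mu'$ turns the constraint into $\bm M\bm\zeta+\bm N\bm\mu\leq\delta\bm t$ and extending $\delta$ to $\bbr_+$ covers the limiting case $\bm\zeta=\bm 0$. The main obstacle is the careful verification that the regularity conditions in Assumption~\ref{assm:penaltysupport}(ii) --- boundedness of $\mathcal V$, containment of the origin, and the Slater-like $\hat{\bm\mu}$ --- are precisely what is required to make $q$ a proper lsc convex function whose biconjugate is itself and whose zero-level set is a symmetric polytope with $\bm 0$ in its interior. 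Once this is in place, all three conclusions reduce to standard gauge/support-function duality.
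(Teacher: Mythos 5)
Your proof is correct, but it travels a genuinely different road from the paper's. For the elimination of $\eta$, the paper argues directly on an arbitrary maximizer $(\bm\lambda^\star,\eta^\star)$: positive homogeneity gives $p(2\bm\zeta^\star)=2(\bm\lambda^\star)^\top\bm\zeta^\star-2\eta^\star$, while feasibility of the same $(\bm\lambda^\star,\eta^\star)$ at $2\bm\zeta^\star$ gives the lower bound $2(\bm\lambda^\star)^\top\bm\zeta^\star-\eta^\star$, forcing $\eta^\star\leq 0$; you instead identify $p$ as the conjugate $q^\star$ of the partial-minimization function $q(\bm\lambda)=\inf\{\eta:(\bm\lambda,\eta)\in\mathcal V\}$, invoke Fenchel--Moreau to get $p^\star=q$, and use the fact that the conjugate of a norm is the indicator of the dual unit ball to conclude $q$ can only take the values $0$ and $+\infty$, so $\eta=0$ is always admissible. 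The symmetry claim is the same in substance (the paper's separating-hyperplane step is exactly the uniqueness of a closed convex set with a given support function that you cite). For the dual norm, the paper computes $p^\star(\bm\zeta)=\max\{\bm\omega^\top\bm\zeta\mid p(\bm\omega)\leq 1\}$ by dualizing twice (first turning the constraint $p(\bm\omega)\leq 1$ into its robust counterpart in $\bm\alpha$, then applying LP duality again), whereas you read $p^\star$ off as the Minkowski gauge of $B^\star$ and rescale $\bm\mu$. Your route buys conceptual economy --- the superfluity of $\eta$ and the dual formula both drop out of standard gauge/support-function duality with no explicit dualization --- at the cost of more machinery: you must verify that $q$ is proper, convex and \emph{closed} (which you correctly obtain from polyhedrality of its epigraph and boundedness of $\mathcal V$, ensuring attainment of the inner infimum) before biconjugacy applies; the paper's argument is longer but entirely elementary, needing only LP duality and homogeneity.
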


\begin{proof}{Proof.}
The proof is relegated to Appendix \ref{appendix:proof_of_results}.
\end{proof}
\begin{example}[Budgeted norm]
\label{example:budgeted_norm}
We illustrate the modeling potential of the norm-based penalty defined in Proposition \ref{prop:polypenaltynorm} with a budgeted norm which computes the sum of the $\Gamma \in \{1,\dots,n_z\}$ largest absolute components of an $n_z$-dimensional vector, \emph{i.e.}, 
$$ p_\Gamma(\bm \zeta) = \max_{\mathcal S \subseteq [n_z], |\mathcal S|=\Gamma} \sum_{i \in \mathcal S} |\zeta_i|
$$
so that $p_1(\bm \zeta)= \|\bm \zeta\|_\infty$ and $p_{n_z}(\bm \zeta) = \|\bm \zeta\|_1$.
This can be represented as the following linear optimization problem 
\begin{equation*}
\begin{array}{rcl}
  p_\Gamma(\bm \zeta)  &=&  \displaystyle \max_{\bm \lambda \in \bbr^{n_z}}\left\{ \bm{\lambda}^\top \bm{\zeta} \left|  \sum_{i \in [n_z]} |\lambda_i| \leq \Gamma, |\lambda_i| \leq 1, ~~\forall i \in [n_z]\right.\right\}\\
  &=& \displaystyle \max_{\bm \lambda, \bm \mu \in \bbr^{n_z}}\left\{ \bm{\lambda}^\top \bm{\zeta} \left|  
  \begin{array}{ll}
  \sum\limits_{i \in [n_z]} \mu_i \leq \Gamma\\ 
  \lambda_i  -\mu_i \leq 0&\quad\forall i \in [n_z]\\
  -\lambda_i  -\mu_i \leq 0&\quad\forall i \in [n_z]\\
  \mu_i \leq 1 &\quad\forall i \in [n_z]
  \end{array}\right.\right\},
  \end{array}
\end{equation*}
which satisfies the properties of the polyhedral penalty in Assumption~\ref{assm:penaltysupport}.
\end{example}

We also remark that since second-order conic constraints can be approximated accurately via a modest sized polyhedron \citep{ben2001polyhedral}, the representation of polyhedral penalty is quite broad and can be used to approximate many different types of convex nonlinear penalty functions such as, {\em inter alia}, convex polynomials and $\ell_p$-norms, for $p\geq 1$.

\subsection{Affine dual recourse adaptation}{\label{subsec:affine dual adaptation}}

Under Assumptions~\ref{assm:evalutionfunction} and \ref{assm:penaltysupport}, we will show that the set $\mathcal{Y}$ which is defined in~\eqref{opt:cone_satisficing} through a conic inequality and a polyhedral uncertainty set admits an equivalent reformulation as a set of a similar nature but with linear constraints and a conic uncertainty set. The following result is a precursor for obtaining the alternate formulation of $\mathcal{Y}$, which would eventually enable us to obtain a safe approximation of the robust satisficing problem via affine dual recourse adaptation. 
\begin{proposition}\label{prop:innermax}
Under Assumption~\ref{assm:penaltysupport}, for any $\bm{a} \in \mathbb{R}^{n_z}$ and $k \geq 0$, we have
$$
\max_{\bm{z} \in \mathcal{Z}} \left\{\bm{a}^\top\bm{z} - k p(\bm{z}) \right\} =\min_{\eta \in \bbr_+, {\bm\beta} \in \mathbb{R}^{n_h}_+}  \left\{ \bm{\beta}^\top\bm{h} + \eta  \mid  (\bm a -\bm H^\top\bm \beta,\eta, k) \in \bar{\mathcal V}  \right\} 
$$
where $\bar{\mathcal V}$ is the perspective cone of $\mathcal V$ given by
$$
\bar{\mathcal V}=  \{ (\bm \lambda , \eta,k)  \in \bbr^{n_z} \times \bbr_+^2 \mid \exists \bm \mu \in \bbr^{n_\mu}:~ \bm M \bm \lambda + \bm N \bm \mu + \bm s \eta \leq  \bm t k    \}.
$$
If the penalty function $p$ is a norm, then  we have
\begin{equation*}
\max_{\bm{z} \in \mathcal{Z}} \left\{\bm{a}^\top\bm{z} - k p(\bm{z}) \right\} =\min_{{\bm\beta} \in \mathbb{R}^{n_h}_+}  \left\{ \bm{\beta}^\top\bm{h} \mid  p^\star(\bm a -\bm H^\top\bm\beta) \leq k \right\}.
\end{equation*}
\end{proposition}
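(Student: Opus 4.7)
The plan is to convert the LHS into a min--max problem by substituting the polyhedral representation of $p$, swap the order of optimization using a minimax theorem, invoke LP strong duality on the inner maximization over $\bm z \in \cz$, and then rescale the dual variables by $k$ so that the polyhedron $\mathcal V$ metamorphoses into its perspective cone $\bar{\mathcal V}$.

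More concretely, writing $p(\bm z)=\max_{(\bm\lambda,\eta)\in\mathcal V}\{\bm\lambda^\top\bm z-\eta\}$ from Assumption~\ref{assm:penaltysupport}(ii) and multiplying by $-k\le 0$, I can rewrite the LHS as
\begin{equation*}
 \max_{\bm z\in \cz}\min_{(\bm\lambda,\eta)\in\mathcal V}\{(\bm a-k\bm\lambda)^\top\bm z + k\eta\}.
\end{equation*}
Since $\mathcal V$ is nonempty, convex and bounded, and the objective is bilinear, Sion's minimax theorem lets me swap max and min. The inner $\max_{\bm z\in\cz}(\bm a - k\bm\lambda)^\top\bm z$ is an LP over $\{\bm z:\bm H\bm z\le \bm h\}$, whose LP dual is $\min_{\bm\beta\ge \bm 0}\{\bm h^\top\bm\beta\mid \bm H^\top\bm\beta=\bm a-k\bm\lambda\}$. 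Stringing these pieces together yields
\begin{equation*}
 \max_{\bm z\in \cz}\{\bm a^\top\bm z - kp(\bm z)\} = \min\{k\eta + \bm h^\top\bm\beta\mid (\bm\lambda,\eta)\in\mathcal V,\ \bm\beta\ge \bm 0,\ \bm H^\top\bm\beta + k\bm\lambda = \bm a\}.
\end{equation*}

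Next, I would perform the perspective change of variables $\bar{\bm\lambda}=k\bm\lambda$, $\bar\eta=k\eta$, $\bar{\bm\mu}=k\bm\mu$. For $k>0$ this is a bijection and converts the description of $\mathcal V$ (i.e., $\bm M\bm\lambda + \bm N\bm\mu + \bm s\eta\le \bm t$) into exactly $\bm M\bar{\bm\lambda} + \bm N\bar{\bm\mu} + \bm s\bar\eta\le \bm t k$, which is precisely the defining inequality of $\bar{\mathcal V}$; the equality $\bm H^\top\bm\beta + k\bm\lambda = \bm a$ turns into $\bar{\bm\lambda} = \bm a - \bm H^\top\bm\beta$. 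Eliminating $\bar{\bm\lambda}$ produces the claimed formula. The degenerate case $k=0$ I would handle separately by noting that the boundedness of $\mathcal V$ (after projecting out $\bm\mu$) forces its recession cone to be $\{\bm 0\}$, so the perspective-cone constraint at $k=0$ collapses to $\bar{\bm\lambda}=\bm 0$, $\bar\eta=0$, which reproduces the LP dual of $\max_{\bm z\in\cz}\bm a^\top\bm z$.

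For the norm special case, Proposition~\ref{prop:polypenaltynorm} yields a representation with $\bm s=\bm 0$ (no offset $\eta$). Hence $\eta=0$ is optimal in the general dual and the perspective-cone constraint reduces to $\exists\,\bm\mu:\bm M(\bm a-\bm H^\top\bm\beta)+\bm N\bm\mu\le \bm t k$, which, again by Proposition~\ref{prop:polypenaltynorm}, is the statement $p^\star(\bm a-\bm H^\top\bm\beta)\le k$. The main obstacle I anticipate is justifying the minimax swap rigorously and cleanly handling the $k=0$ boundary via the boundedness of $\mathcal V$; beyond that, the argument is routine LP duality stitched together with a perspective rescaling.
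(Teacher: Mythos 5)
Your proposal is correct and follows essentially the same route as the paper's proof: substitute the polyhedral representation of $p$, interchange the max and min using the boundedness of $\mathcal V$, apply LP duality to the inner maximization over $\bm z \in \cz$, and absorb the factor $k$ into the dual variables to pass from $\mathcal V$ to its perspective cone $\bar{\mathcal V}$, with the $k=0$ case handled separately via the boundedness of $\mathcal V$ and the norm case via Proposition~\ref{prop:polypenaltynorm}. No substantive differences.
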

\begin{proof}{Proof.}
The proof is relegated to Appendix \ref{appendix:proof_of_results}.
\qed 
\end{proof}

\begin{theorem}
\label{thm:dual_satisficing}
Under Assumptions~\ref{assm:evalutionfunction} \& \ref{assm:penaltysupport}, for any $\bm{x} \in \mathbb{R}^{n_x}$ and $k \geq 0$, we have
\begin{equation}
\label{opt:cone_satisficing_dual}
    \mathcal{Y} = \left\{ (\bm{x},\upsilon,k) \in \mathcal{X} \times \mathbb{R} \times \mathbb{R}_+
    \left| 
    \begin{array}{ll}
    \exists \bm\beta \in \mathcal{R}^{n_f, n_h}, \bm \mu \in \mathcal{R}^{n_f, n_\mu}, \eta \in  \mathcal{R}^{n_f, 1}: \\ 
    \bm{\rho}^\top \bm{f}(\bm{x}) + \bm\beta(\bm\rho)^\top \bm{h} + \eta(\bm\rho) \leq \upsilon &\forall \bm\rho \in \mathcal{P} \\
\bm M( \bm{F}(\bm{x})^\top \bm\rho-\bm{H}^\top\bm\beta(\bm\rho)) + \bm N \bm\mu(\bm\rho) + \bm s \eta(\bm\rho) \leq \bm t k~~ &\forall \bm\rho \in \mathcal{P} \\
    \bm\beta(\bm\rho) \geq \bm{0}, \ \eta(\bm\rho) \geq \bm{0} & \forall \bm\rho \in \mathcal{P}
    \end{array}
    \right. \right\} 
\end{equation}
where $\mathcal{P}=\left\{ \bm\rho \in \mathcal{K}^\star \mid \bm{B}^\top \bm\rho = \bm{d} \right\}$. 
\end{theorem}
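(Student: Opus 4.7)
The plan is to dualize twice: first in the recourse variables $\bm y$ (taking the conic dual of the inner minimization that defines $g$) and then in $\bm z$ (using Proposition~\ref{prop:innermax} to handle the worst-case disutility-adjusted constraint). The assumptions are exactly what is needed to justify each step.

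First I would rewrite $g(\bm{x},\bm{z})$ using conic duality. Under Assumption~\ref{assm:evalutionfunction}, the defining problem~\eqref{eq:convexfunction} is strictly feasible (complete recourse yields a slack interior point) and bounded below (bounded recourse), so Slater's condition holds and strong conic duality gives
\[
g(\bm{x},\bm{z}) \;=\; \max_{\bm\rho \in \mathcal{P}} \bm{\rho}^\top\!\bigl(\bm{f}(\bm{x})+\bm{F}(\bm{x})\bm{z}\bigr), \qquad \mathcal{P}=\{\bm\rho \in \mathcal{K}^\star \mid \bm{B}^\top \bm\rho = \bm d\}.
\]
The key payoff is that the existence of a recourse function $\bm y(\cdot)$ satisfying the original constraints of $\mathcal{Y}$ is therefore equivalent to the single robust inequality
\[
\max_{\bm\rho \in \mathcal{P}}\, \bm{\rho}^\top\!\bigl(\bm{f}(\bm{x})+\bm{F}(\bm{x})\bm{z}\bigr) - k\,p(\bm{z}) \;\leq\; \upsilon \qquad \forall \bm{z}\in\mathcal{Z}.
\]

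Next I would swap the order of the two maximizations. Since $\bm{\rho}^\top \bm{f}(\bm{x})$ does not depend on $\bm{z}$ and $p(\bm{z})$ does not depend on $\bm{\rho}$, we have
\[
\max_{\bm{z}\in\mathcal{Z}}\max_{\bm\rho\in\mathcal{P}}\bigl\{\bm{\rho}^\top\bm f(\bm x)+\bm{\rho}^\top\bm F(\bm x)\bm{z}-k\,p(\bm{z})\bigr\}
=\max_{\bm\rho\in\mathcal{P}}\Bigl\{\bm{\rho}^\top\bm f(\bm x)+\max_{\bm z\in\mathcal Z}\bigl[\bm{\rho}^\top\bm F(\bm x)\bm z-k\,p(\bm z)\bigr]\Bigr\}.
\]
I would then apply Proposition~\ref{prop:innermax} with $\bm a = \bm F(\bm x)^\top \bm\rho$ to the inner max over $\bm z$, which under Assumption~\ref{assm:penaltysupport} yields a minimum over $(\bm\beta,\bm\mu,\eta)$ whose constraints are exactly the perspective-cone constraint $\bm M(\bm F(\bm x)^\top\bm\rho-\bm H^\top\bm\beta)+\bm N\bm\mu+\bm s\eta\leq \bm t k$ together with $\bm\beta\geq \bm 0,\,\eta\geq 0$.

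Finally, the robust inequality
\[
\max_{\bm\rho\in\mathcal P}\Bigl\{\bm{\rho}^\top\bm f(\bm x)+\min_{(\bm\beta,\bm\mu,\eta)\in S(\bm\rho)}\bigl(\bm\beta^\top\bm h+\eta\bigr)\Bigr\}\leq \upsilon
\]
is equivalent, by a standard ``for all $\bm\rho$, there exists a minimizer'' argument, to the pointwise statement that for every $\bm\rho\in\mathcal P$ one can pick $(\bm\beta(\bm\rho),\bm\mu(\bm\rho),\eta(\bm\rho))$ satisfying the listed constraints together with $\bm\rho^\top\bm f(\bm x)+\bm\beta(\bm\rho)^\top\bm h+\eta(\bm\rho)\leq \upsilon$. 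Because no measurability or structure of these mappings is required (we have only existence at each $\bm\rho$), we may regard $\bm\beta,\bm\mu,\eta$ as arbitrary members of $\mathcal R^{n_f,\cdot}$; this is exactly the characterization of $\mathcal Y$ claimed in~\eqref{opt:cone_satisficing_dual}.

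The main obstacle, and the reason both assumptions are invoked, is justifying strong duality in the first step: one has to confirm that $g(\bm x,\bm z)$ is finite and attained by the dual, uniformly in $\bm z\in\mathcal Z$, so that the inequality $g(\bm x,\bm z)-kp(\bm z)\leq\upsilon$ really is equivalent to its dualized form for every $\bm z$ in the support. Complete recourse provides Slater interiority for the primal uniformly in $\bm z$, and bounded recourse rules out duality gaps from below; a brief verification of these two points is the only non-routine part of the argument. The remaining manipulations (swapping maxima over decoupled variables, invoking Proposition~\ref{prop:innermax}, and ``internalizing'' an inner minimum as an $\exists$-quantifier in a robust constraint) are standard.
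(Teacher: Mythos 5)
Your proposal is correct and follows essentially the same route as the paper's proof: conic duality on the inner minimization over $\bm{y}$ (justified by complete and bounded recourse), interchange of the two maximizations, and an application of Proposition~\ref{prop:innermax} to convert the inner maximization over $\bm{z}$ into a minimization whose feasible set is absorbed as the existential quantifier over $(\bm\beta,\bm\mu,\eta) \in \mathcal{R}^{n_f,\cdot}$. No gaps.
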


\begin{proof}{Proof.}
First, it follows from~\eqref{opt:cone_satisficing} that 
\begin{equation*}
\begin{aligned}
    (\bm{x},\upsilon,k) \in \mathcal{Y} &\;\;\Longleftrightarrow\;\; \max_{\bm{z} \in \mathcal{Z}} \; \min_{\bm{y}} \left\{ \bm{d}^\top \bm{y} - k p( \bm{z}) \mid \bm{B}\bm{y} \succeq_{\mathcal{K}} \bm{f}(\bm{x}) + \bm{F}(\bm{x})\bm{z} \right\} \leq \upsilon. 
\end{aligned}    
\end{equation*}
Under complete and bounded recourse, observe that the inner minimization (over $\bm{y}$) is strictly feasible and its objective is finite. Thus, we can transform it into a maximization problem via conic duality, that is,
\begin{equation*}
\begin{aligned}
    (\bm{x},\upsilon,k) \in \mathcal{Y} &\;\; \Longleftrightarrow \;\; \max_{\bm{z} \in \mathcal{Z}} \; \max_{\bm{\rho} \in \mathcal{P}} \left\{ \bm{\rho}^\top \left( \bm{f}(\bm{x}) + \bm{F}(\bm{x})\bm{z} \right) - k p(\bm{z}) \right\} \leq \upsilon \\
    &\;\; \Longleftrightarrow \;\; \max_{\bm{\rho} \in \mathcal{P}} \left\{ \bm\rho^\top \bm{f}(\bm{x}) + \max_{\bm{z} \in \mathcal{Z}}  \left\{ \bm{\rho}^\top \bm{F}(\bm{x})\bm{z} - k p(\bm{z}) \right\} \right\} \leq \upsilon. 
\end{aligned}    
\end{equation*}
Invoking Proposition~\ref{prop:innermax} to transform the inner maximization (over $\bm{z}$) to a minimization problem (over $\bm\beta$, $\bm \mu$  and $\eta$) completes the proof. \qed 
\end{proof}

In line with the recourse variable $\bm{y}$ and the uncertainty $\bm{z}$ in the original definition \eqref{opt:cone_satisficing} of $\mathcal{Y}$, we may interpret $(\bm\beta, \bm \mu,\eta)$ and $\bm\rho$ as a recourse variable and as an uncertainty, respectively. Inspired by how~\eqref{opt:cone_satisficing_dual} is obtained, we shall refer to $(\bm\beta, \bm \mu,\eta)$ as the dual recourse and to $\mathcal{P}$ as the dual uncertainty set. Under Assumption~\ref{assm:evalutionfunction}, the feasible set $\mathcal{Y}$ as defined in~\eqref{opt:cone_satisficing} is guaranteed to be non-empty, but it is not necessary easy to identify a primal recourse $\bm{y}$ that is compatible with a feasible $(\bm{x},\upsilon,k)$. On the other hand, we will show below that there always exists a feasible dual recourse $(\bm\beta, \bm \mu,\eta)$ that is easy to construct. This observation will then be used to construct the following tractable subset $\bar{\mathcal{Y}}$ of $\mathcal{Y}$:
\begin{equation}
\label{opt:cone_satisficing_dual_ldr}
    \bar{\mathcal{Y}} = \left\{ (\bm{x},\upsilon,k) \in \mathcal{X} \times \mathbb{R} \times \mathbb{R}_+
    \left| 
    \begin{array}{ll}
    \exists \bm\beta \in \mathcal{L}^{n_f, n_h}, \bm \mu \in \mathcal{L}^{n_f, n_\mu}, \eta \in  \mathcal{L}^{n_f, 1}: \\ 
    \bm{\rho}^\top \bm{f}(\bm{x}) + \bm\beta(\bm\rho)^\top \bm{h} + \eta(\bm\rho) \leq \upsilon &\forall \bm\rho \in \mathcal{P} \\
\bm M( \bm{F}(\bm{x})^\top \bm\rho-\bm{H}^\top\bm\beta(\bm\rho)) + \bm N \bm\mu(\bm\rho) + \bm s \eta(\bm\rho) \leq \bm t k~~ &\forall \bm\rho \in \mathcal{P} \\
    \bm\beta(\bm\rho) \geq \bm{0}, \ \eta(\bm\rho) \geq \bm{0} & \forall \bm\rho \in \mathcal{P}
    \end{array}
    \right. \right\} 
\end{equation}
where  $\mathcal{L}^{m,n}$ denotes the sub-class of functions in $\mathcal{R}^{m,n}$ that are affinely dependent on the inputs as follows:
$$
\mathcal{L}^{m,n} =\left\{ \bm y \in \mathcal{R}^{m,n} \left|  
\begin{array}{l}
\exists \bm \pi \in \bbr^n, \bm \Pi \in \bbr^{n\times m}:~\bm y(\bm z) = \bm \pi +  \bm \Pi \bm z\\
\end{array}
\right.\right\}. 
$$
\begin{theorem}
\label{thm:satisficing_dual_ldr_feasiblity}
    Under Assumptions~\ref{assm:evalutionfunction} \& \ref{assm:penaltysupport}, $\bar{\mathcal{Y}}$ is a tractable safe approximation of $\mathcal{Y}$.
\end{theorem}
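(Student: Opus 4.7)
My plan is to verify the four conditions of Definition~\ref{def:safeapprox}. Containment~(2), namely $\bar{\mathcal Y}\subseteq\mathcal Y$, is immediate from $\mathcal L^{n_f,n_h}\subseteq\mathcal R^{n_f,n_h}$ (and similarly for $\bm\mu$ and $\eta$) together with Theorem~\ref{thm:dual_satisficing}. Monotonicity~(3) follows by inspection of~\eqref{opt:cone_satisficing_dual_ldr}: $\upsilon$ appears only on the right-hand side of the first semi-infinite constraint and $k$ only on the right-hand side of the second, so any feasible affine recourse at $(\bm x,\upsilon,k)$ remains feasible at $(\bm x,\upsilon',k')$ whenever $\upsilon'\geq\upsilon$ and $k'\geq k$. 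For tractability~(1), I would substitute the affine ansatz $\bm\beta(\bm\rho)=\bm\beta_0+\bm\Pi_\beta\bm\rho$, $\bm\mu(\bm\rho)=\bm\mu_0+\bm\Pi_\mu\bm\rho$, $\eta(\bm\rho)=\eta_0+\bm\pi_\eta^\top\bm\rho$, rendering each constraint linear in $\bm\rho$ on the conic-linear set $\mathcal P=\{\bm\rho\in\mathcal K^\star\mid\bm B^\top\bm\rho=\bm d\}$; classical conic duality over~$\mathcal P$ then reformulates every $\forall\bm\rho$-constraint as a finite system of $\mathcal K$-inequalities, yielding a finite-dimensional conic program in $(\bm x,\upsilon,k)$ and the affine coefficients.

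The substantive obstacle is condition~(4), the feasibility requirement. The construction rests on two ancillary facts. First, under complete recourse the set $\mathcal P$ is bounded: complete recourse is equivalent to $\bm B\,\mathbb R^{n_y}-\mathcal K=\mathbb R^{n_f}$, whose polar cone equals $\mathcal K^\star\cap\ker(\bm B^\top)$; this polar is therefore $\{\bm 0\}$, and it coincides with the recession cone of~$\mathcal P$. Second, complete recourse supplies strict (Slater) primal feasibility and bounded recourse supplies primal finiteness for the inner problem defining $g(\bm x,\bm 0)$, so conic strong duality with dual attainment yields
\begin{equation*}
g(\bm x,\bm 0)=\max_{\bm\rho\in\mathcal P}\bm\rho^\top\bm f(\bm x).
\end{equation*}

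Given $\bm x\in\mathcal X$ and $\upsilon>g(\bm x,\bm 0)$, I propose the constant (and therefore affine) recourse $\bm\beta(\bm\rho)\equiv\bm 0$, $\eta(\bm\rho)\equiv 0$, and $\bm\mu(\bm\rho)\equiv\bar k\,\hat{\bm\mu}$, where $\hat{\bm\mu}$ is the strict interior point from Assumption~\ref{assm:penaltysupport} satisfying $\bm N\hat{\bm\mu}<\bm t$. The sign constraints on $\bm\beta$ and $\eta$ hold trivially. The first semi-infinite constraint reduces to $\bm\rho^\top\bm f(\bm x)\leq\upsilon$ for all $\bm\rho\in\mathcal P$, which follows from the strong-duality identity together with $\upsilon>g(\bm x,\bm 0)$. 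The second reduces to
\begin{equation*}
\bm M\bm F(\bm x)^\top\bm\rho\leq\bar k\,(\bm t-\bm N\hat{\bm\mu})\qquad\forall\bm\rho\in\mathcal P,
\end{equation*}
and since $\bm t-\bm N\hat{\bm\mu}>\bm 0$ componentwise while $\bm M\bm F(\bm x)^\top\bm\rho$ is uniformly bounded on the compact set~$\mathcal P$, any sufficiently large $\bar k$ satisfies it. I expect the main subtlety to be the interplay between the two Slater-type strict inequalities---one delivering compactness of~$\mathcal P$ and the other delivering a strictly positive margin $\bm t-\bm N\hat{\bm\mu}$---which together are exactly what make the required $\bar k$ finite and thereby close the safe-approximation argument.
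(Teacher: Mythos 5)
Your proposal is correct and follows essentially the same route as the paper's proof: the same vanishing choice $\bm\beta\equiv\bm 0$, $\eta\equiv 0$, the same strong-duality identity $g(\bm x,\bm 0)=\max_{\bm\rho\in\mathcal P}\bm\rho^\top\bm f(\bm x)$ for the first constraint, and the same use of the Slater point $\hat{\bm\mu}$ together with boundedness of $\mathcal P$ to produce a finite $\bar k$. The only (cosmetic) difference is that you establish boundedness of $\mathcal P$ via the recession-cone/polar identity $\mathcal K^\star\cap\ker(\bm B^\top)=\{\bm 0\}$, whereas the paper argues by contradiction through an unbounded linear functional whose conic dual would be infeasible under complete recourse.
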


\begin{proof}{Proof.}
    For any $\bm{x} \in \mathcal{X}$ and $\upsilon > g(\bm{x},\bm{0})$, we first show that the solution $\bm \beta(\bm \rho) = \bm 0$, and $\eta(\bm \rho) = 0$ robustly satisfies the requirements in~\eqref{opt:cone_satisficing_dual_ldr}. With vanishing $\bm\beta$ and $\eta$, we first observe that 
    \begin{equation*}
    \begin{aligned}
         \max_{\bm\rho \in \mathcal{P}} \left\{ {\bm\rho}^\top  \bm{f}(\bm{x}) + \bm\beta(\bm\rho)^\top\bm{h} + \eta(\bm\rho) \right\} &=  \min_{\bm{y}} \left\{  \bm{d}^\top \bm{y} \mid \bm{B} \bm{y} \succeq_{\mathcal{K}} \bm{f}(\bm{x})\right\} 
        =g(\bm{x},\bm{0}) < \upsilon,
    \end{aligned}
    \end{equation*}
    where the first equality is due to the strong duality (as the minimization problem is strictly feasible). 
    
    We next show that the remaining constraints can be satisfied because there exists $\hat{k} > 0$ such that 
    \begin{equation}
    \label{ineq:penalty}
    \bm M\bm{F}(\bm{x})^\top \bm\rho + \bm N \hat{\bm \mu}\hat{k} \leq \bm t\hat{k}  \qquad   \forall \bm\rho \in \mathcal{P}. 
    \end{equation}
Indeed, since $\bm N \hat{\bm \mu} < \bm t$, it implies that the set 
$\{ \bm \lambda \mid \bm M \bm \lambda + \bm N \hat{\bm \mu} \leq \bm t  \}$ must contain the origin in its interior. Hence, there exists a norm $\|\cdot \|$ such that 
$$
\{ \bm \lambda \mid \| \bm \lambda\| \leq 1  \} \subseteq \{ \bm \lambda \mid \bm M \bm \lambda + \bm N \hat{\bm \mu} \leq \bm t  \}.
$$
Therefore, it suffices to show that there exists a finite $\hat{k} > 0 $ such that 
$$
\max_{\bm\rho \in \mathcal{P}}  \Vert \bm{F}({\bm{x}})^\top \bm\rho \Vert \leq \hat{k}.
$$
Suppose that the dual uncertainty set $\mathcal{P}$ is unbounded for the sake of a contradiction. Then, there exists a vector $\bm{v} \in \mathbb{R}^{n_f}$ such that $\max_{\bm\rho \in \mathcal{P}} \;  {\bm\rho}^\top \bm{v}$ is unbounded, and therefore its corresponding dual $\min_{\bm y} \left\{ \bm{d}^\top\bm{y} \mid \bm{B} \bm{y} \succeq_{\mathcal{K}} \bm{v} \right\}$ must be infeasible, which contradicts with Assumption~\ref{assm:evalutionfunction}. Hence, $\mathcal{P}$ is a bounded set, and a finite and positive $\hat{k}$ exists, which in turns implies that $(\bm{x},\upsilon,\hat{k}) \in \bar{\mathcal{Y}}$.  
     
Next, to show that $\bar{\mathcal{Y}}$ is a tractable set, observe that since the dual recourse variables $(\bm\beta,\bm\mu,\eta)$ are restricted to affine functions, we can express the requirements in~\eqref{opt:cone_satisficing_dual_ldr} more compactly as
\begin{equation*}
\begin{aligned}
    \exists \bm{L} \in \bbr^{(n_h+n_\mu+1) \times (n_f+1)} \ \forall \bm\rho \in \mathcal{P}: \ \  \bm\gamma(\bm{x},\upsilon,k,\bm{L}) + \bm\Gamma(\bm{x},\upsilon,k,\bm{L})\bm\rho \leq \bm{0} 
\end{aligned}    
\end{equation*}
where $\bm{L} $ gathers the affine dual recourse adaption coefficients of $(\bm\beta, \bm\mu, \eta)$ and $\bm\gamma,\bm\Gamma$ are appropriate affine mappings. Under Assumption~\ref{assm:evalutionfunction}, for any $\bm\gamma \in \bbr^{n_\gamma}$ and $\bm\Gamma \in \bbr^{n_\gamma \times n_f}$, the robust counterpart of $\bm\gamma + \bm\Gamma \bm\rho \leq \bm{0}, \; \forall \bm\rho \in \mathcal{P}$ is given by the following linear conic constraint 
\begin{equation*}
    \exists\bm{V} \in \bbr^{n_\gamma \times n_y}: \left\{ \begin{array}{l}
        \bm\gamma + \bm{V}\bm{d} \leq \bm{0} \\
        \bm{B} (\bm{V}_i)^\top \succeq_{\ck} \bm\Gamma_i^\top \quad \forall i \in [n_\gamma].
    \end{array} \right. 
\end{equation*}
Indeed, observe that the given robust constraint can be written down as
\begin{equation*}
    \max_{\bm\rho \in \mathcal{P}} \; \gamma_i + \bm\Gamma_i \bm\rho \leq 0 \quad \forall i \in [n_\gamma]
    \;\; \Longleftrightarrow \;\;
    \min_{\bm{v}^i \in \bbr^{n_y}} \left\{ \gamma_i + \bm{d}^\top\bm{v}^i \mid \bm{B}\bm{v}^i \succeq_{\mathcal{K}} \bm\Gamma_i^\top \right\}  \leq 0 \quad \forall i \in [n_\gamma],
\end{equation*} 
where the equivalence holds because the minimization problem is convex and strictly feasible. We then denote $\left[ \bm{v}^1, \hdots, \bm{v}^{n_\gamma} \right]^\top$ by $\bm{V}$. Since $\bar{\mathcal{Y}}$ constitutes a tractable set and is increasingly larger with $k$ and $\upsilon$, we conclude that it is indeed a safe tractable approximation of $\mathcal{Y}$ as desired. 
\qed 
\end{proof}

We now consider a special, but an important case where the tractable safe approximation is exact. 
\begin{theorem}
\label{thm:datadriven_dual_satisficing_special}
Suppose that the evaluation function $g$ can be expressed as 
\begin{equation}
\label{eq:gomega_special}
   g(\bm x,\bm z) = \min_{y \in \bbr}\left\{ y~|~ \bm{1} y \geq \bm{f}(\bm{x}) + \bm{F}(\bm{x})\bm{z} \right\},
\end{equation}
then $\bar{\mathcal Y} = \mathcal Y$. 
\end{theorem}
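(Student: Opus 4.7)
The containment $\bar{\mathcal{Y}} \subseteq \mathcal{Y}$ is inherited from Theorem~\ref{thm:satisficing_dual_ldr_feasiblity}, since every affine dual recourse is a special case of the general dual recourse. The substantive task is therefore to prove the reverse inclusion $\mathcal{Y} \subseteq \bar{\mathcal{Y}}$ under the specific structure~\eqref{eq:gomega_special}. My plan is to exploit the fact that in this case the dual uncertainty set collapses to a simplex, on which arbitrary functions can be exactly reproduced by affine functions through vertex interpolation.

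First, I would verify that Assumption~\ref{assm:evalutionfunction} is met so that Theorem~\ref{thm:dual_satisficing} applies: here $\bm{B} = \bm{1} \in \mathbb{R}^{n_f \times 1}$, $\bm{d} = 1$, and $\mathcal{K} = \mathbb{R}_+^{n_f}$, so taking $y = \max_i v_i$ certifies complete recourse, and bounded recourse is immediate. Theorem~\ref{thm:dual_satisficing} then yields the dual representation~\eqref{opt:cone_satisficing_dual} with
$$
\mathcal{P} = \left\{ \bm\rho \in \mathbb{R}_+^{n_f} \;\middle|\; \bm{1}^\top \bm\rho = 1 \right\},
$$
the standard simplex in $\mathbb{R}^{n_f}$, whose extreme points are the unit vectors $\bm{e}_1, \dots, \bm{e}_{n_f}$.

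Next, I would fix any $(\bm{x}, \upsilon, k) \in \mathcal{Y}$ and let $(\bm\beta, \bm\mu, \eta) \in \mathcal{R}^{n_f, n_h} \times \mathcal{R}^{n_f, n_\mu} \times \mathcal{R}^{n_f, 1}$ be witnessing, possibly nonlinear, dual recourse functions satisfying the three constraints of~\eqref{opt:cone_satisficing_dual}. Setting $\bm\beta^i \triangleq \bm\beta(\bm{e}_i)$, $\bm\mu^i \triangleq \bm\mu(\bm{e}_i)$ and $\eta^i \triangleq \eta(\bm{e}_i)$ at each vertex, I would then construct the linear (hence affine) dual policies
$$
\bm\beta^{\mathrm{aff}}(\bm\rho) \triangleq \sum_{i=1}^{n_f} \rho_i \bm\beta^i, \qquad
\bm\mu^{\mathrm{aff}}(\bm\rho) \triangleq \sum_{i=1}^{n_f} \rho_i \bm\mu^i, \qquad
\eta^{\mathrm{aff}}(\bm\rho) \triangleq \sum_{i=1}^{n_f} \rho_i \eta^i.
$$
The non-negativity of $\bm\beta^{\mathrm{aff}}$ and $\eta^{\mathrm{aff}}$ on $\mathcal{P}$ is immediate, since $\rho_i \geq 0$ and the vertex values $\bm\beta^i$, $\eta^i$ are themselves non-negative.

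Finally, I would verify the remaining two inequality constraints of~\eqref{opt:cone_satisficing_dual_ldr}. The key observation is that both inequality expressions depend jointly linearly on the tuple $(\bm\rho, \bm\beta(\bm\rho), \bm\mu(\bm\rho), \eta(\bm\rho))$; by construction the affine policies coincide with the original witnessing ones at every vertex $\bm{e}_i$, where the constraints hold because $\bm{e}_i \in \mathcal{P}$. For an arbitrary $\bm\rho = \sum_i \rho_i \bm{e}_i \in \mathcal{P}$, taking the convex combination $\sum_i \rho_i$ of the vertex-wise inequalities yields the desired inequalities for the affine policies, establishing $(\bm{x}, \upsilon, k) \in \bar{\mathcal{Y}}$. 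The only conceptual subtlety---and arguably the lone real step---is recognizing that the choice $\bm{B} = \bm{1}$ together with $\mathcal{K} = \mathbb{R}_+^{n_f}$ forces $\mathcal{P}$ to be a simplex with only $n_f$ vertices; once this is noticed, exactness of affine adaptation reduces to a standard convex-combination argument familiar from adaptive robust optimization over simplicial uncertainty sets.
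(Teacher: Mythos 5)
Your proof is correct and follows essentially the same route as the paper: both hinge on observing that the dual uncertainty set $\mathcal{P}=\{\bm\rho\in\bbr^{n_f}_+\mid \bm 1^\top\bm\rho=1\}$ is a simplex, over which affine dual recourse adaptation is exact. The only difference is that the paper delegates the exactness claim to a citation \citep{zhen2018adjustable}, whereas you supply the vertex-interpolation argument explicitly; that argument is sound.
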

\begin{proof}{Proof.}
Observe that the dual uncertainty set is now a simplex, $\mathcal{P}=\{ \bm \rho \in \bbr^{n_f}_+ \mid \bm 1^\top \bm \rho = 1\}$. It has well been shown \citep[see, {\em e.g.},][]{zhen2018adjustable} that if the uncertainty set is a simplex, then approximation via affine recourse adaption is exact.  
\qed 
\end{proof}

We can now present the tractable safe approximation of the data-driven robust satisficing problem~\eqref{eq:data-drivenRS}, noting that for each $\omega \in [\Omega]$
$$
 \mathcal{Z}^\omega = \{ \bm{\zeta} \in \mathbb{R}^{n_z} \mid  \bm{H}\bm{\zeta} \leq \bm{h}^\omega\}
$$
where $\bm h^\omega = \bm{h}-\bm{H}\hat{\bm z}^\omega$, and the evaluation function $g^\omega:\cx\times \cz_\omega \rightarrow \bbr$,
\begin{equation}
\label{eq:gomega}
\begin{aligned}
   g^\omega(\bm x,\bm z) = \; &\text{minimize} && \bm{d}^\top \bm{y} \\
    &\text{subject to} && \bm{B}\bm{y} \succeq_{\mathcal{K}} \bm{f}^\omega(\bm{x}) + \bm{F}(\bm{x})\bm{z} \\
    & && \bm{y}\in \mathbb{R}^{n_y},
\end{aligned}  
\end{equation}
where $\bm{f}^\omega(\bm{x}) = \bm{f}(\bm{x})  + \bm{F}(\bm{x})\hat{\bm{z}}^\omega$. 

Using affine dual recourse adaptation that we have derived earlier, the tractable safe approximation of the data-driven robust satisficing in the form of  Problem~\eqref{eq:data-drivenRS4} is given by
\begin{equation}
\label{opt:satisficing_dd_dual}
\begin{aligned}
    &{\min} && k  \\
    &{\rm s.t.} &&  \frac{1}{\Omega} \sum_{\omega \in [\Omega]}\upsilon_\omega \leq \tau 
      \\
     &&&   {\bm\rho}^\top \bm{f}^\omega(\bm{x})  + \left(\bm{h}^\omega \right)^\top \bm\beta^\omega (\bm\rho) + \eta^\omega(\bm\rho)\leq \upsilon_\omega  && \hspace{-30mm} \forall \bm{\rho}  \in \mathcal{P}, \omega \in [\Omega]  \\
     & &&  \bm M\left( \bm{F}(\bm{x})^\top \bm\rho-\bm{H}^\top\bm\beta^\omega (\bm\rho) \right)+ \bm N \bm \mu^\omega(\bm\rho) + \bm{s} \eta^\omega(\bm\rho) \leq \bm t k && \hspace{-30mm}\forall \bm{\rho}  \in \mathcal{P}, \omega \in [\Omega] \\
     & && \bm\beta^\omega (\bm\rho) \geq \bm{0},\;\eta^\omega(\bm\rho)\geq 0 && \hspace{-30mm} \forall \bm\rho \in \mathcal{P}, \omega \in [\Omega] \\
    & && \bm{x} \in \mathcal{X},\;  k \in \mathbb{R}_+,\; \bm \upsilon \in \bbr^{\Omega}, \;\bm{\beta}^1, \hdots, \bm\beta^\Omega \in \mathcal{L}^{n_{f}, n_{h}},\; \bm\mu^1, \hdots, \bm\mu^\Omega \in \mathcal{L}^{n_{f}, n_\mu},\; \eta^1, \hdots,\eta^\Omega \in \mathcal{L}^{n_{f}, 1}
\end{aligned}
\end{equation}
where $\mathcal{P}=\left\{ \bm\rho \in \mathcal{K}^\star \mid \bm{B}^\top \bm\rho = \bm{d} \right\}$. 

This result is computationally significant since, despite the difficulty to solve it exactly, if $\tau > Z_0$, thanks to Proposition~\ref{prop:RSfeasibility} we can still obtain a feasible solution of Problem~\eqref{eq:data-drivenRS} by solving a modest sized conic optimization problem.


\subsection{On linear optimization with recourse}
{\label{appendix:adaptive_linear}}
We revisit the data-driven linear optimization problem of \cite{long2022robust} and explore how the affine dual recourse adaptation can be used to provide a better tractable safe approximation, and with superior computational performance. Specifically, we consider the $\ell_1$-norm penalty function, $p(\bm \zeta) = \Vert \bm \zeta \Vert_1$, and the evaluation function is a linear optimization problem given by 
\begin{equation}\label{opt:nominal_twostage3.3}
\begin{aligned}
g(\bm x,\bm z)=\; &\text{minimize}&& \bm c^\top \bm x +\bm{d}^\top\bm{y}\\
 &   \text{subject to}&& \bm{B}\bm{y} \geq  \bm{f}(\bm{x})+ \bm{F}(\bm{x})\bm{z}\\
&&& \bm{y} \in \mathbb{R}^{n_{y}},
\end{aligned}
\end{equation} 
and we can express the feasible set from~\eqref{eq:corerobustcounterpart} as
\begin{equation}
\label{opt:nonneg_orthantcone_satisficing}
\mathcal Y = \left\{ (\bm x, \upsilon , k) \in \cx \times \bbr \times \bbr_+ \left|
\begin{array}{ll}
\exists  \bm{y}\in \mathcal{R}^{n_{z},n_{y}}:\\
  \bm{c}^\top \bm{x}+\bm{d}^\top\bm{y}(\bm{z}) -  k \Vert \bm z\Vert_1 \leq \upsilon   & \forall \bm{z} \in \mathcal{Z} \\
\bm{B}\bm{y}(\bm{z}) \geq \bm{f}(\bm{x}) + \bm{F}(\bm{x})\bm{z} & \forall \bm{z} \in \mathcal{Z} 
\end{array} \right. \right\}.
\end{equation}

Generally, two-stage robust linear optimization problems are not tractable and are typically solved approximately via affine primal recourse adaptation, but this would not necessarily lead to a feasible solution \citep{long2022robust}. As a result, we consider a more flexible, non-affine primal recourse adaptation extension with extra coefficients $\bm{q}^\dag \in \mathbb{R}^{n_y}$:
\begin{equation}
\label{eq:primal_nldr}
    \bm{y}(\bm{z}) = \bm{q} + \bm{Q}\bm{z} + \bm{q}^\dag \Vert \bm{z} \Vert_1,
\end{equation}
which results in the following tractable safe approximation
\begin{equation}
\label{opt:satisficing_nldr}
\bar{\mathcal Y}_P = \left\{ (\bm x, \upsilon , k) \in \cx \times \bbr \times \bbr_+ \left|
\begin{array}{ll}
\exists  \bm{q} \in \mathbb{R}^{n_y}, \; \bm{Q} \in \mathbb{R}^{n_y \times n_z}, \; \bm{q}^\dag \in \mathbb{R}^{n_y}:\\
 \bm{c}^\top \bm{x} +\bm{d}^\top (\bm{q} + \bm{Q}\bm{z} + \bm{q}^\dag \Vert \bm{z} \Vert_1) -k \Vert \bm{z} \Vert_1  \leq \upsilon   & \forall \bm{z} \in \mathcal{Z} \\
 \bm{B} (\bm{q} + \bm{Q}\bm{z} + \bm{q}^\dag \Vert \bm{z} \Vert_1) \geq \bm{f}(\bm{x}) + \bm{F}(\bm{x})\bm{z} & \forall \bm{z} \in \mathcal{Z} \\
 k - \bm{d}^\top \bm{q}^\dag \geq 0, \; \bm{B}\bm{q}^\dag \geq \bm{0} \\
 \end{array} \right. \right\},
\end{equation}
where the two linear constraints $k - \bm{d}^\top \bm{q}^\dag \geq 0$ and $\bm{B}\bm{q}^\dag \geq \bm{0}$ are added to ensure that the two robust constraints are concave in the uncertain $\bm{z}$ and consequently tractability of the safe approximation. 
\begin{proposition}
\label{prop:satisficing_ldr_feasibility}
Under Assumption~\ref{assm:evalutionfunction}, $\bar{\mathcal Y}_P$ is a tractable safe approximation of $\mathcal Y$, and has the following explicit representation 
\begin{equation}
\label{opt:satisficing_nldr_rc}
\bar{\mathcal Y}_P = \left\{ (\bm x, \upsilon , k) \in \cx \times \bbr \times \bbr_+ \left|
\begin{array}{ll}
\exists \bm{q} \in \mathbb{R}^{n_y}, \; \bm{Q} \in \mathbb{R}^{n_y \times n_z}, \; \bm{q}^\dag \in \mathbb{R}^{n_y}, \bm{w}^0, \hdots, \bm{w}^{n_f} \in \mathbb{R}^{n_h}_+:\\
 \bm c^\top \bm x+ \bm{d}^\top \bm{q} + \bm{h}^\top \bm{w}^0  \leq \upsilon    \\
  -(k -\bm{d}^\top \bm{q}^\dag) \bm{1} \leq   \bm{H}^\top \bm{w}^0 - \bm{Q}^\top \bm{d} \leq (k -\bm{d}^\top \bm{q}^\dag) \bm{1}\\
  \bm{f}_i(\bm{x}) + \bm{h}^\top \bm{w}^i  \leq \bm{B}_i \bm{q}^\dag & \hspace{-6mm}\forall i \in [n_f] \\
  -\bm{B}_i \bm{q}^\dag  \bm{1} \leq \bm{H}^\top \bm{w}^i - \bm{F}^\top_i (\bm{x}) + \bm{Q}^\top \bm{B}^\top_i \leq \bm{B}_i \bm{q}^\dag  \bm{1} & \hspace{-6mm}\forall i \in [n_f] 
 \end{array} \right. \right\}.
\end{equation}
 \end{proposition}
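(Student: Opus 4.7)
The plan is to split the proof into three stages: first, derive the explicit linear system~\eqref{opt:satisficing_nldr_rc} from the robust formulation~\eqref{opt:satisficing_nldr} via LP duality; second, observe that $\bar{\mathcal Y}_P \subseteq \mathcal Y$ is automatic because the recourse~\eqref{eq:primal_nldr} is a particular choice of $\bm y(\cdot) \in \mathcal R^{n_z, n_y}$; and third, verify the four conditions of Definition~\ref{def:safeapprox}, of which the feasibility condition~(iv) will be the main obstacle.

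For the first stage, once $\bm y(\bm z) = \bm q + \bm Q \bm z + \bm q^\dag \|\bm z\|_1$ is substituted into~\eqref{opt:nonneg_orthantcone_satisficing} and terms are rearranged, each of the two robust constraints takes the form $\max_{\bm z \in \mathcal Z}\{\bm a^\top \bm z - k' \|\bm z\|_1\} \leq b$, where the coefficient $k' \geq 0$ precisely because of the side conditions $k - \bm d^\top \bm q^\dag \geq 0$ and $\bm B \bm q^\dag \geq \bm 0$ (these make the inner objective concave in $\bm z$ and thus allow strong LP duality). Invoking Proposition~\ref{prop:innermax} specialized to the $\ell_1$-norm penalty, whose dual norm is $\ell_\infty$, each such constraint is equivalent to the existence of $\bm w \geq \bm 0$ with $\|\bm a - \bm H^\top \bm w\|_\infty \leq k'$ and $\bm h^\top \bm w \leq b$. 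Applying this to the objective-like constraint with $\bm a = \bm Q^\top \bm d$, $k' = k - \bm d^\top \bm q^\dag$, and $b = \upsilon - \bm c^\top \bm x - \bm d^\top \bm q$ produces the first pair of inequalities in~\eqref{opt:satisficing_nldr_rc} through the slack $\bm w^0$. Applying it row by row to the feasibility constraint with $\bm a = (\bm F_i(\bm x) - \bm B_i \bm Q)^\top$, $k' = \bm B_i \bm q^\dag$, and $b = \bm B_i \bm q - \bm f_i(\bm x)$ produces the remaining inequalities through $\bm w^i$, $i \in [n_f]$.

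For the third stage, tractability and convexity are clear from~\eqref{opt:satisficing_nldr_rc}, and monotonicity in $\upsilon$ and $k$ is routine: raising $\upsilon$ only relaxes the first inequality, and raising $k$ relaxes both the sign requirement $k - \bm d^\top \bm q^\dag \geq 0$ and the accompanying box bounds $\pm(k - \bm d^\top \bm q^\dag)\bm 1$. The feasibility condition is the key difficulty, and the construction I propose is as follows. Given $\bm x \in \mathcal X$ and $\upsilon > g(\bm x, \bm 0)$, choose $\bm q$ with $\bm B \bm q \geq \bm f(\bm x)$ and $\bm c^\top \bm x + \bm d^\top \bm q \leq \upsilon$ (such a $\bm q$ exists since $g(\bm x, \bm 0) < \upsilon$), set $\bm Q = \bm 0$, invoke complete recourse (Assumption~\ref{assm:evalutionfunction}) to produce $\bm q^\dag_0$ with $\bm B \bm q^\dag_0 \geq \bm 1$, scale to $\bm q^\dag = \alpha \bm q^\dag_0$ for $\alpha \geq \max_{i \in [n_f]} \|\bm F_i(\bm x)\|_\infty$, and finally pick any finite $k \geq \bm d^\top \bm q^\dag$. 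The objective-like robust constraint then holds because $\bm c^\top \bm x + \bm d^\top \bm q \leq \upsilon$ and the residual $(k - \bm d^\top \bm q^\dag) \|\bm z\|_1$ is nonnegative, while H\"older's inequality $\bm F_i(\bm x)\bm z \leq \|\bm F_i(\bm x)\|_\infty \|\bm z\|_1 \leq \alpha \|\bm z\|_1 \leq \bm B_i \bm q^\dag \|\bm z\|_1$, combined with $\bm B_i \bm q \geq \bm f_i(\bm x)$, verifies the feasibility robust constraint row by row. The crux of the argument is the simultaneous scaling that makes $\bm q^\dag$ dominate every row of $\bm F(\bm x)$, which leverages both complete recourse and the $\ell_1/\ell_\infty$ duality; the remaining work is routine LP duality.
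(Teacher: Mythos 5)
Your proposal is correct and follows essentially the same route as the paper's proof: feasibility is established by the same construction (a nominal recourse $\bm q$ plus a complete-recourse vector $\bm q^\dag$ scaled so that $\bm{B}\bm{q}^\dag \geq \max_{i,j}|\bm F_{ij}(\bm x)|\cdot\bm 1$, with $\bm Q=\bm 0$ and $k \geq \bm d^\top\bm q^\dag$), and the explicit representation is obtained by the same application of Proposition~\ref{prop:innermax} with the $\ell_1/\ell_\infty$ duality to each robust constraint. As a side note, your derivation yields $\bm f_i(\bm x)+\bm h^\top\bm w^i \leq \bm B_i\bm q$ in the third block of~\eqref{opt:satisficing_nldr_rc}, which matches the form used later in~\eqref{eq:c} and indicates the $\bm B_i\bm q^\dag$ appearing there in the statement is a typo.
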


\begin{proof}{Proof.}
Similar results have been shown in \citep{long2022robust}, and for completeness we provide an alternate proof in Appendix \ref{appendix:proof_of_results}.
\end{proof}

Next, we will similarly analyze the dual robust satisficing by considering the following equivalent dual tractable safe approximation from~\eqref{opt:cone_satisficing_dual_ldr} as follows 
\begin{equation}
\label{opt:satisficing_dual_ldr}
\bar{\mathcal Y}_D = \left\{ (\bm x, \upsilon , k) \in \cx \times \bbr \times \bbr_+ \left|
\begin{array}{ll}
\exists  \bm\pi \in \mathbb{R}^{n_h}, \; \bm{\Pi} \in \mathbb{R}^{n_h \times n_f}:\\
\bm{c}^\top \bm{x}+  \bm\rho^\top \bm{f}(\bm{x}) + (\bm\pi + \bm\Pi \bm\rho)^\top\bm{h} \leq \upsilon & \forall \bm{\rho} \in \mathcal{P}  \\
-k \bm{1} \leq \bm{H}^\top (\bm\pi + \bm\Pi \bm\rho) - \bm{F}(\bm{x})^\top \bm\rho  \leq  k \bm{1} & \forall \bm{\rho} \in \mathcal{P} \\
\bm\pi + \bm\Pi \bm\rho \geq 0& \forall \bm{\rho} \in \mathcal{P} \\  
 \end{array} \right. \right\},
\end{equation}
which we have earlier shown is feasible due to complete recourse, for dual uncertainty set $\mathcal{P}$ that is $\left\{ \bm\rho \in \mathbb{R}^{n_f}_+ \mid \bm{B}^\top \bm\rho = \bm{d} \right\}$ and for $\bm\pi$ and $\bm\Pi$ that are the intercept and the gradient of $\bm\beta(\bm\rho)$ with respect to $\bm\rho$. Our objective here is therefore to show that, despite the restricted linearity of $\bm\beta(\bm\rho)$, the dual tractable safe approximation is a tighter approximation than the primal safe approximation.
This observation provides a fresh perspective, which goes in a direction opposite to~\cite{Bertsimas2016}, that the primal and the dual affine adaptations are not necessarily equivalent approximations.

\begin{theorem}\label{thm:dual_satisficing_lowerbound}
Under Assumption~\ref{assm:evalutionfunction},
$$
\bar{\mathcal Y}_P \subseteq \bar{\mathcal Y}_D. 
$$
\end{theorem}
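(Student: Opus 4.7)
The plan is to prove the set inclusion by an explicit constructive argument: starting from any feasible triple $(\bm x, \upsilon, k) \in \bar{\mathcal Y}_P$ together with its witnesses $\bm q, \bm Q, \bm q^\dag$ and the multipliers $\bm w^0, \bm w^1, \ldots, \bm w^{n_f}$ that appear in the robust counterpart~\eqref{opt:satisficing_nldr_rc}, I would exhibit an affine dual recourse $\bm\beta(\bm\rho) = \bm\pi + \bm\Pi\bm\rho$ that certifies membership in $\bar{\mathcal Y}_D$. The natural candidate, which mirrors the duality between the primal robust constraints and the dual uncertainty $\bm\rho$, is to set $\bm\pi = \bm w^0$ and to take the columns of $\bm\Pi$ to be the $n_f$ vectors $\bm w^1, \ldots, \bm w^{n_f}$, so that $\bm\beta(\bm\rho) = \bm w^0 + \sum_{i\in[n_f]} \rho_i \bm w^i$.

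Next I would verify the three robust requirements defining $\bar{\mathcal Y}_D$ in~\eqref{opt:satisficing_dual_ldr} for this candidate, exploiting the identity $\bm B^\top \bm\rho = \bm d$ for $\bm\rho \in \mathcal P$. The non-negativity condition $\bm\pi + \bm\Pi\bm\rho \geq \bm 0$ is immediate since $\bm w^i \geq \bm 0$ for every $i$ and $\bm\rho \geq \bm 0$. The objective-type inequality $\bm c^\top\bm x + \bm\rho^\top \bm f(\bm x) + (\bm\pi + \bm\Pi\bm\rho)^\top \bm h \leq \upsilon$ would follow from taking the first row of~\eqref{opt:satisficing_nldr_rc}, weighting the $i$-th inequality $\bm f_i(\bm x) + \bm h^\top \bm w^i \leq \bm B_i \bm q$ by $\rho_i$ and summing, and then using $\sum_i \rho_i \bm B_i \bm q = \bm d^\top \bm q$. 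Finally, the two-sided infinity-norm condition on $\bm H^\top(\bm\pi + \bm\Pi\bm\rho) - \bm F(\bm x)^\top\bm\rho$ would be established by adding the $\bm w^0$-bound to the $\rho_i$-weighted combination of the $\bm w^i$-bounds, collapsing the cross term via $\bm Q^\top \bm d = \sum_i \rho_i \bm Q^\top \bm B_i^\top$, and observing that the right-hand side telescopes to $(k-\bm d^\top \bm q^\dag)\bm 1 + (\bm\rho^\top \bm B \bm q^\dag)\bm 1 = k\bm 1$, where non-negativity of $\rho_i$ and $\bm B_i\bm q^\dag$ is essential.

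The main obstacle I foresee is not any individual algebraic step but rather identifying the correct candidate affine adaptation in the first place; the choice $\bm\pi = \bm w^0$, $\bm\Pi = [\bm w^1, \ldots, \bm w^{n_f}]$ is guided by seeing $\bm w^0$ as the multiplier paired with the evaluation-row of the primal and $\bm w^i$ as the multiplier paired with the $i$-th conic row that $\bm\rho$ weights. Once this guess is made, the remainder reduces to bookkeeping: every $\bm q^\dag$-dependent slack in the primal counterpart appears, after weighting by $\rho_i$, as exactly the slack needed to collapse into the clean bound $k\bm 1$. Since $\bar{\mathcal Y}_D$ is visibly monotone in $(\upsilon,k)$ and satisfies the remaining properties of Definition~\ref{def:safeapprox} by Theorem~\ref{thm:satisficing_dual_ldr_feasiblity}, the resulting inclusion $\bar{\mathcal Y}_P \subseteq \bar{\mathcal Y}_D$ is the desired dominance, showing that the affine dual adaptation is at least as tight as the non-affine primal adaptation of~\cite{long2022robust}.
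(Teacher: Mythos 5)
Your proposal is correct and follows essentially the same route as the paper: you choose the identical candidate $\bm\pi = \bm{w}^0$, $\bm\Pi = [\bm{w}^1,\hdots,\bm{w}^{n_f}]$ built from the multipliers guaranteed by Proposition~\ref{prop:satisficing_ldr_feasibility}, and you verify the three robust requirements of~\eqref{opt:satisficing_dual_ldr} from the same four inequality blocks. The only cosmetic difference is that you check the robust constraints pointwise via non-negative $\bm\rho$-weighted combinations (using $\bm{B}^\top\bm\rho = \bm{d}$ and $\bm\rho \geq \bm{0}$), whereas the paper re-dualizes each robust constraint over $\bm\rho \in \mathcal{P}$ and exhibits the certificates $\bm\nu = \bm{q}$ and $\bm\Phi, \bm\Psi = \bm{q}^\dag\bm{1}^\top \mp \bm{Q}$ --- two directions of the same duality argument.
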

\begin{proof}{Proof.}
First of all, we compare the feasible sets~\eqref{opt:satisficing_nldr} and~\eqref{opt:satisficing_dual_ldr} when $\bm{x}$ is fixed to $\bm{x}' \in \mathcal{X}$. We can then abbreviate $\bm{f}(\bm{x}')$ and $\bm{F}(\bm{x}')$ to $\bm{f}'$ and $\bm{F}'$, respectively. Besides, we let $\upsilon'$ denote the value of $\upsilon  - \bm{c}^\top\bm{x}'$. Equivalently, it suffices to show that 
\begin{equation}
\label{opt:satisficing_dual_ldr_fix_x}
\bar{\mathcal Y}'_D = \left\{ (\upsilon , k) \in \bbr \times \bbr_+ \left|
\begin{array}{ll}
\exists  \bm\pi \in \mathbb{R}^{n_h}, \; \bm{\Pi} \in \mathbb{R}^{n_h \times n_f}:\\
\bm\rho^\top \bm{f}' + (\bm\pi + \bm\Pi \bm\rho)^\top\bm{h} \leq \upsilon' & \forall \bm{\rho} \in \mathcal{P}  \\
-k \bm{1} \leq \bm{H}^\top (\bm\pi + \bm\Pi \bm\rho) - (\bm{F}')^\top \bm\rho  \leq  k \bm{1} & \forall \bm{\rho} \in \mathcal{P} \\
\bm\pi + \bm\Pi \bm\rho \geq 0  & \forall \bm{\rho} \in \mathcal{P} \\  
 \end{array} \right. \right\},
\end{equation}
is a superset of 
\begin{equation}
\label{opt:satisficing_nldr_fix_x}
\bar{\mathcal Y}'_P = \left\{ (\upsilon , k) \in \bbr \times \bbr_+ \left|
\begin{array}{ll}
\exists  \bm{q} \in \mathbb{R}^{n_y}, \; \bm{Q} \in \mathbb{R}^{n_y \times n_z}, \; \bm{q}^\dag \in \mathbb{R}^{n_y}:\\
 \bm{d}^\top (\bm{q} + \bm{Q}\bm{z} + \bm{q}^\dag \Vert \bm{z} \Vert_1) -k \Vert \bm{z} \Vert_1  \leq \upsilon'    & \forall \bm{z} \in \mathcal{Z} \\
 \bm{B} (\bm{q} + \bm{Q}\bm{z} + \bm{q}^\dag \Vert \bm{z} \Vert_1) \geq \bm{f}' + \bm{F}'\bm{z} & \forall \bm{z} \in \mathcal{Z} \\
 k - \bm{d}^\top \bm{q}^\dag \geq 0, \; \bm{B}\bm{q}^\dag \geq \bm{0}
 \end{array} \right. \right\}.
\end{equation}

For any $(\upsilon,k) \in \bar{\mathcal Y}'_P$, there exists $(\bm{q},\bm{Q},\bm{q}^\dag)$ that satisfies all of the inequality requirements in~\eqref{opt:satisficing_nldr_fix_x}. It then suffices to show that for the same $(\upsilon,k)$, we can construct $\bm\pi$ and $\bm{\Pi}$ such that $(\bm\pi, \bm{\Pi})$ satisfies the inequality requirements in~\eqref{opt:satisficing_dual_ldr_fix_x}. To achieve this, we specifically consider $\bm\pi = \bm{w}^0 \in \mathbb{R}_+^{n_h}$ and $\bm\Pi = \left[ \bm{w}^1, \hdots, \bm{w}^{n_f} \right] \in \mathbb{R}_+^{n_h \times n_f}$, where $\{ \bm{w}^i \}_{i=0}^{n_f}$ satisfy the following conditions.
\begin{subequations}
\begin{eqnarray}
 \bm{d}^\top \bm{q} + \bm{h}^\top \bm{w}^0  \leq \upsilon' \label{eq:a} \\
  -(k - \bm{d}^\top \bm{q}^\dag) \bm{1} \leq \bm{H}^\top \bm{w}^0 - \bm{Q}^\top \bm{d} \leq (k - \bm{d}^\top \bm{q}^\dag) \bm{1} \label{eq:b} \\
 \bm{f}'_i + \bm{h}^\top \bm{w}^i  \leq \bm{B}_i \bm{q} \quad \forall i \in [n_f] \label{eq:c} \\
  -\bm{B}_i \bm{q}^\dag\bm{1} \leq \bm{H}^\top \bm{w}^i - (\bm{F}')^\top_i  + \bm{Q}^\top \bm{B}^\top_i  \leq \bm{B}_i \bm{q}^\dag\bm{1} \quad \forall i \in [n_f] \label{eq:d}
\end{eqnarray}
\end{subequations}
Note that the existence $\{ \bm{w}^i \}_{i=0}^{n_h}$ is guaranteed by Proposition~\ref{prop:satisficing_ldr_feasibility}, which holds when $\mathcal{X} = \{ \bm{x}' \}$. 

We will now show that this choice of $(\bm\pi, \bm{\Pi})$ satisfies all constraints in Problem~\eqref{opt:satisficing_dual_ldr_fix_x}. By a usual duality argument, the first inequality in~\eqref{opt:satisficing_dual_ldr_fix_x} is robustly satisfied for all $\bm\rho \in \mathcal{P}$ if and only if there exists a vector $\bm\nu \in \mathbb{R}^{n_y}$ such that
\begin{equation*}
    \bm\pi^\top \bm{h} + \bm{\nu}^\top \bm{d} \leq \upsilon' \quad \text{and} \quad \bm{B}\bm\nu \geq \bm{f}' + \bm{\Pi}^{\top}\bm{h}.
\end{equation*}
Thanks to the inequalities~\eqref{eq:a} and \eqref{eq:c}, $\bm\nu$ can be simply chosen as $\bm{q}$. 

Similarly, the two subsequent inequalities of~\eqref{opt:satisficing_dual_ldr_fix_x}, namely $-k \bm{1} \leq \bm{H}^\top (\bm\pi + \bm\Pi \bm\rho) - (\bm{F}')^\top \bm\rho  \leq  k \bm{1}$, are robustly satisfied for all $\bm\rho \in \mathcal{P}$ if and only if there exist matrices $\bm\Phi, \bm\Psi \in \mathbb{R}^{n_y \times n_z}$ such that
\begin{equation*}
\left. \begin{array}{c}
    \bm{H}^\top\bm\pi + \bm{\Phi}^\top \bm{d} \leq k \bm{1} \\
    \bm{B}\bm{\Phi} \geq \bm{\Pi}^\top \bm{H} - \bm{F}'
\end{array} \right\}
\quad \text{and} \quad
\left\{ \begin{array}{c}
    -\bm{H}^\top\bm\pi + \bm{\Psi}^\top \bm{d} \leq k \bm{1} \\
    \bm{B}\bm{\Psi} \geq \bm{F}' - \bm{\Pi}^\top \bm{H} .
\end{array} \right.
\end{equation*}
In this case, we can choose, $\bm\Phi = \bm{q}^\dag \bm{1}^\top - \bm{Q}$ and $\bm\Psi = \bm{q}^\dag \bm{1}^\top_{n_z} + \bm{Q}$. Observe that
\begin{equation*}
\begin{aligned}
    &\bm{H}^\top\bm\pi + \bm{\Phi}^\top \bm{d} = \bm{H}^\top\bm{w}^0 + \bm{1}(\bm{q}^\dag)^\top\bm{d} -  \bm{Q}^\top\bm{d} \leq k  \bm{1} \\
    &\bm{B}\bm{\Phi} - \bm{\Pi}^\top \bm{H} + \bm{F}' = \left[ \bm{B}_i \bm{q}^\dag \bm{1}^\top - \bm{B}_i\bm{Q} - (\bm{w}^i)^\top \bm{H} + \bm{F}'_i \right]_{i=1}^{n_f} \geq \bm{0},
\end{aligned}
\end{equation*}
where the inequalities holds due to the inequalities~\eqref{eq:b} and~\eqref{eq:d}, as desired. Similarly,
\begin{equation*}
\begin{aligned}
    &-\bm{H}^\top\bm\pi + \bm{\Psi}^\top \bm{d} = -\bm{H}^\top\bm{w}^0 + \bm{1}(\bm{q}^\dag)^\top\bm{d} +  \bm{Q}^\top\bm{d} \leq k  \bm{1} \\
    &\bm{B}\bm{\Psi} + \bm{\Pi}^\top \bm{H} - \bm{F}' = \left[ \bm{B}_i \bm{q}^\dag \bm{1}^\top + \bm{B}_i\bm{Q} + (\bm{w}^i)^\top \bm{H} - \bm{F}'_i \right]_{i=1}^{n_f} \geq \bm{0}.
\end{aligned}
\end{equation*}
Finally, the last requirement in~\eqref{opt:satisficing_dual_ldr_fix_x} is trivially satisfied because of the non-negativity of $\bm\pi$, $\bm\Pi$ and $\bm\rho$, by construction. Hence, $\bar{\mathcal{Y}}'_P \subseteq \bar{\mathcal{Y}}'_D$, and the theorem follows. 
\qed 
\end{proof}


Finally, to illustrate the distinction between the primal non-affine and the dual affine adaptions of a two-stage robust satisficing linear optimization problem, we consider the following specific instance
\begin{equation}
\label{opt:teststrict}
\begin{aligned}
    &\text{minimize} && k \\
    &\text{subject to} && \bm{d}^\top \bm{y}(\bm{z}) \leq \tau + k \Vert \bm{z} \Vert_1 && \forall \bm{z}: \bm{H}\bm{z} \leq \bm{h} \\ 
    & && \bm{B}\bm{y}(\bm{z}) \geq \bm{f} + \bm{F}\bm{z} && \forall \bm{z}: \bm{H}\bm{z} \leq \bm{h} \\ 
    & &&\bm{y} \in \mathcal{R}^{n_y, n_z}, \; k \in \mathbb{R}_+
\end{aligned}    
\end{equation}
for the parameters listed in Appendix~\ref{appendix:testrestrictparam}. 
The chosen recourse matrix $\bm{B}$ is non-negative and numerically satisfies $\min_{\bm{y}} \left\{ \bm{d}^\top\bm{y} \vert \ \bm{B}\bm{y} \geq \bm{0} \right\}=0$ and Assumption~\ref{assm:evalutionfunction} is thus satisfied. For different values of the target objective $\tau$ that are sufficiently large, Figure~\ref{fig:p_vs_d} suggests that the dual affine adaptation is an approximation that is less conservative compared to its primal non-affine counterpart. 
\begin{figure}[hbt!]
    \centering 
    \includegraphics[scale=0.65]{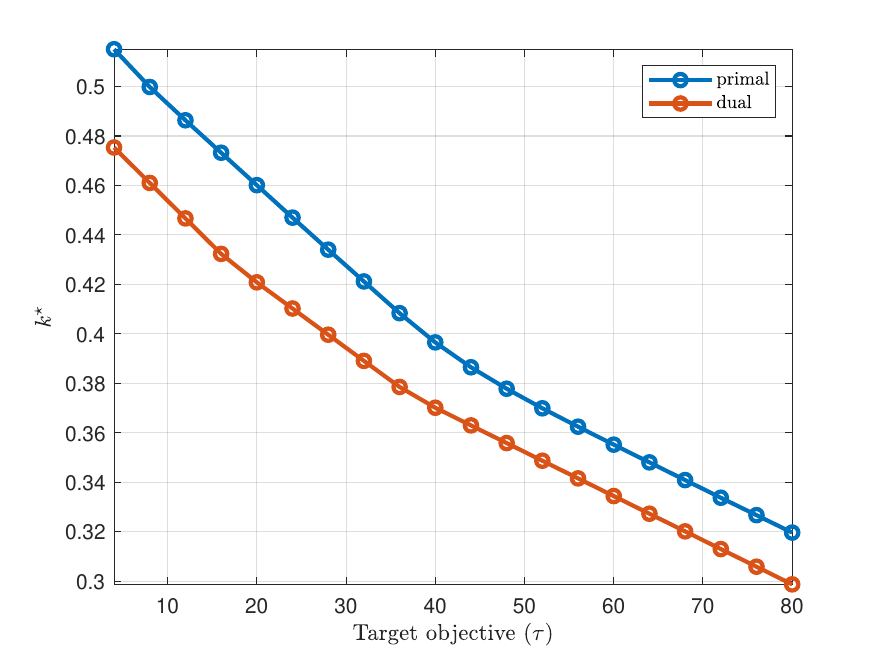}
    \caption{Robust satisficing objectives obtained from the primal non-affine and the dual affine adaptations.}
    \label{fig:p_vs_d}
\end{figure}

In Appendix~\ref{subsec:Lot-sizing}, we present an additional computational study of solving various robust network lot-sizing problems. Focusing on evaluating the computational speed, we show that the dual affine adaptation method again significantly outperforms the primal method in solving both the robust optimization and satisficing models.

\section{Portfolio optimization via expected utility maximization}{\label{sec:tractable examples}}
In our numerical study, we consider a data-driven portfolio optimization problem where $\bmt z$ denotes a random vector of asset returns, with distribution $\bmt z\sim \bbp^\star$, $\bbp^\star \in \mathcal P_0(\cz)$. Ideally, the portfolio manager would like to maximize the certainty equivalent of the portfolio returns, \emph{i.e.},
\begin{equation*}
\begin{aligned}
    &\text{maximize} && u_a^{-1}\left(\epstar{ u_a(\bm{x}^\top \tilde{\bm{z}}) }\right) \\
    &\text{subject to} && \bm{x} \in \mathcal{X},
\end{aligned}
\end{equation*}
over the feasible set $\cx=\{\bm x \in \bbr^n | \bm{1}^\top \bm{x} = 1, \bm{x} \geq \bm{0} \}$, and based on an exponential utility with risk aversion $a>0$, {\em i.e.}, 
$u_a(r) =  (1-\exp(-ar))/a$. However, the actual distribution $\bbp^\star$ is unobservable. Instead, we are only given $\Omega$ historical returns given by $\bmh z^1,\dots,\bmh z^\Omega$. 

For the empirical optimization model, we minimize the expected disutility of the portfolio loss as follows:
\begin{equation}\label{eq:portemp}Z_0= \min_{\bm x \in \mathcal X} \ephat{\ell_a(-\bm{x}^\top \bmt{z})}  = \min_{\bm x \in \mathcal X} \frac{1}{\Omega} \sum_{ \omega \in [\Omega]} \ell_a(-\bm{x}^\top \bmh{z}^\omega),
\end{equation}
where $\ell_a(w) = -u_a(-w) = (\exp(aw)-1)/a$ is the disutility defined on loss $w$.

For the stochastic optimization model, we assume a multivariate normal distribution,  $\bbp^\dag = \mathcal N( \bmh \mu, \bmh \Sigma )$. The assumption of normal distribution implies that these parameters  can be estimated by maximizing the likelihood function so that $\bmh \mu = \ephat{\bmt r}$ and $\bmh \Sigma = \ephat{(\bmt r- \bmh \mu)(\bmt r- \bmh \mu)^\top}$, which are computed over the $\Omega$ empirical samples. However, asset returns frequently exhibit
non-normal behavior. This is particularly true of certain types of assets, such as hedge funds, emerging markets, or assets with credit risk, and is even more evident during financial crises. Under normality, maximizing the certainty equivalent of the portfolio returns under exponential utility is equivalent to solving the celebrated Markowitz mean-variance portfolio optimization problem as follows:    
\begin{equation}\label{eq:portstoc}
\begin{aligned}
    &\text{maximize} && \bm x^\top \bmh \mu -a \frac{\bm x^\top \bmh \Sigma \bm x}{2} \\
    &\text{subject to} && \bm{x} \in \mathcal{X}.
\end{aligned}
\end{equation}
Comparing to the empirical optimization problem, which is an exponential conic optimization problem that depends on the data size, the Markowitz portfolio optimization problem is a fixed and modest sized quadratic optimization problem.  For the robust satisficing model, we assume that the support set is a box $\cz = \{ \bm z | \underline{\bm z} \leq \bm z  \leq \bar{\bm z}\}$, where $\underline{\bm z}$ and $\bar{\bm z}$ are  determined from data.  
The corresponding robust satisficing investment problem can be formulated as
\begin{equation}
\label{eq:satisficingexputil}
\begin{aligned}
    &\text{minimize} && k \\
    &\text{subject to} && \frac{1}{\Omega} \sum_{\omega \in [\Omega]} \sup_{\bm z \in \cz^\omega} \left( g^{\omega}(\bm{x},\bm{z})-k p_\Gamma(\bm{z})\right) \leq \tau  &&\\
    & && \bm{x} \in \mathcal{X}, \; k \in \mathbb{R}_+.
\end{aligned}
\end{equation}
where $p_\Gamma(\bm{z})$ is the budgeted norm (see Example~\ref{example:budgeted_norm}), $\cz^\omega = [\underline{\bm z} - \bmh z^\omega, \bar{\bm z} - \bmh z^\omega]$, and $ g^\omega(\bm x,\bm z)$ is the conic representable evaluation function satisfying 
$g^\omega(\bm x,\bm z) =\ell_a(-\bm x^\top (\bmh z^\omega + \bm z))$ for all $\bm x \in \cx$, $\bm z \in \cz^\omega$. Specifically, from Theorem~\ref{thm:convextoconic}, we express the evaluation function as 

\begin{equation*}
\begin{aligned}
    g^{\omega}(\bm{x},\bm{z}) = \; &\text{minimize} && y_1+Py_2 \\
    &\text{subject to} && 
   \left[ \begin{array}{ccc} 
 0& 0 \\
1&0 \\ 
0&1\\
0&1\\
\end{array}\right]
 \left[ \begin{array}{c}   y_1\\y_2
\end{array}\right] \succeq_{\mathcal{K}}
    \left[ \begin{array}{c} \bm{x}^\top \bmh z^\omega\\  -P \\0
\\1 \end{array}\right]+ \left[ \begin{array}{c}  \bm{x}^\top\\  \bm{0}^{\top} 
\\\bm{0}^{\top} \\\bm{0}^{\top}\end{array}\right]\bm{z}\\
&&& \bm y \in \bbr^2,
\end{aligned}    
\end{equation*}
where $\mathcal{K} = \mathcal K_{\ell_a} \times  \bbr_{+}$. Note that since we are evaluating $\ell_a(-\bm x^\top( \bmh z^\omega + \bm z))$ for $\bm x \in \cx =\{\bm x {\color{black}\in \bbr^n} | \bm{1}^\top \bm{x} = 1, \bm{x} \geq \bm{0}\}$ and $\bm z \in \cz^\omega$, it suffices to choose the featured domain $\mathcal W = [\underline w,\bar w]$ such that
$$
\bar w  \geq \max_{\bm z \in \cz^\omega, \bm x \in \cx}\{ -\bm x^\top (\bmh z^\omega + \bm z)\}   =\max_{\bm z \in [\underline{\bm z}, \bar{\bm z} ] , \bm x \in \cx}\{ -\bm x^\top \bm z\} =-\underset{i \in [n]}{\min}\left\{\underline{z}_i\right\}
$$
and 
$$
\underline w  \leq \min_{\bm z \in \cz^\omega, \bm x \in \cx}\{ -\bm x^\top (\bmh z^\omega + \bm z)\}   =\min_{\bm z \in [\underline{\bm z}, \bar{\bm z} ] , \bm x \in \cx}\{ -\bm x^\top \bm z\} = -\underset{i \in [n]}{\max}\left\{\bar{z}_i\right\}.
$$
From Proposition~\ref{prop:BigM}, we choose
 \begin{equation*}
\begin{array}{rcl}
  P &\geq &  \underset{ v \in \{\underline w,\bar w\}}{\max} \left\{
 \ell_a'( v)v - \ell_a( v)\right\}
 = \underset{v \in \{\underline{w},\bar{w}\}}{\max}\left\{ \exp(a v)\left(v-1/a\right)+1/a \right\},
\end{array}
\end{equation*}
so that
$$
 g^{\omega}(\bm{x},\bm{z}) = \ell_a(-\bm x^\top( \bmh z^\omega + \bm z)) \qquad \forall \bm x \in \cx,\bm z \in \cz^\omega.
$$

\begin{proposition}
\label{prop:portfoliosafeapproximation}
The tractable safe approximation of Problem~\eqref{eq:satisficingexputil} is given by
\begin{equation}
\allowdisplaybreaks
\label{eq:satisficingexputil3}
\begin{array}{rclll}
    &{\rm minimize} & k  \\
    &{\rm~subject~to} &  \displaystyle  \frac{1}{\Omega} \sum_{\omega \in [\Omega]}\upsilon_\omega \leq \tau 
      \\
     &&\displaystyle   \bm{x}^\top \bmh z^\omega \rho_1-P\rho_2 +\rho_4 +  (\bar{\bm{z}}^{\omega})^\top \bar{\bm\beta}^{\omega,0}- (\underline{\bm{z}}^{\omega})^\top \underline{\bm\beta}^{\omega,0} \\
     && \displaystyle \qquad\qquad +\sum_{j \in [4]} \left((\bar{\bm{z}}^{\omega})^\top \bar{\bm\beta}^{\omega,j}- (\underline{\bm{z}}^{\omega})^\top \underline{\bm\beta}^{\omega,j} \right)\rho_j  \leq \upsilon_\omega  &  \forall \bm{\rho}  \in \mathcal{P}, \omega \in [\Omega]  \\
     && \displaystyle \bm{1}^{\top}\bm \mu^{\omega,0} + \sum_{j \in [4]} \bm{1}^{\top}\bm \mu^{\omega,j}\rho_j  \leq \Gamma k& \forall \bm{\rho}  \in \mathcal{P},  \omega \in [\Omega]\\
     && \displaystyle x_i \rho_1  - \bar{\beta}_i^{\omega,0} + \underline{\beta}_i^{\omega,0} - \mu_i^{\omega,0} + \sum_{j \in [4]} \left(- \bar{\beta}_i^{\omega,j} + \underline{\beta}_i^{\omega,j} - \mu_i^{\omega,j}\right)\rho_j \leq  0&  \forall \bm{\rho}  \in \mathcal{P}, \omega \in [\Omega], i \in [n]  \\
      && \displaystyle -x_i \rho_1  + \bar{\beta}_i^{\omega,0} - \underline{\beta}_i^{\omega,0} - \mu_i^{\omega,0} + \sum_{j \in [4]} \left(\bar{\beta}_i^{\omega,j} - \underline{\beta}_i^{\omega,j} - \mu_i^{\omega,j}\right)\rho_j \leq 0 ~~~&  \forall \bm{\rho}  \in \mathcal{P}, \omega \in [\Omega], i \in [n]  \\
     && \displaystyle \mu_i^{\omega,0} +\sum_{j \in [4]}\mu_i^{\omega,j}\rho_j  \leq k&  \forall \bm{\rho}  \in \mathcal{P}, \omega \in [\Omega], i \in[n]  \\
     && \displaystyle \bar{\beta}_i^{\omega,0} +\sum_{j \in [4]}\bar{\beta}_i^{\omega,j}\rho_j  \geq 0&  \forall \bm{\rho}  \in \mathcal{P}, \omega \in [\Omega], i \in[n]  \\
     && \displaystyle \underline{\beta}_i^{\omega,0} +\sum_{j \in [4]}\underline{\beta}_i^{\omega,j}\rho_j  \geq 0&  \forall \bm{\rho}  \in \mathcal{P}, \omega \in [\Omega], i \in[n]  \\
    & & \bm{x} \in \mathcal{X},\;  k \in \mathbb{R}_+,\; \bm \upsilon \in \bbr^{\Omega}, \; \bar{\bm{\beta}}^{\omega,j}, \; \underline{\bm{\beta}}^{\omega,j}, \; \bm\mu^{\omega,j} \in \bbr^{n} & \forall \omega \in [\Omega], j \in \{0,\dots, 4\},
\end{array}
\end{equation}
where $\bar{\bm z}^\omega = \bar{\bm z} - \bmh z^\omega$ and  $\underline{\bm z}^\omega = \underline{\bm z} - \bmh z^\omega$. Every constraint in Problem~\eqref{eq:satisficingexputil3} is linear in the dual uncertainty $\bm\rho$ and its robust counterpart could be derived from the following equivalence:
$$\begin{array}{c}
\forall \bm \rho \in \mathcal P: \bm v^\top \bm \rho \leq w \quad \Longleftrightarrow \quad 
\exists y \in \bbr: \left\{ \begin{array}{l}
(y-v_3)(\exp(-av_1/(y-v_3))-1)/a + v_2 + Py \leq w, \\
y\geq v_3, \\
y\geq v_4.
\end{array}\right.
\end{array}
$$
\end{proposition}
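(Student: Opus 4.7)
The plan is to instantiate the general affine dual recourse adaptation framework of Section~\ref{subsec:affine dual adaptation} on the portfolio satisficing problem~\eqref{eq:satisficingexputil} and then close the derivation with a conic duality step that produces the exponential equivalence stated at the end of the proposition.

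First, I would read off the conic data of $g^\omega$ from the perspective-cast representation displayed just above: $\bm{d}=(1,P)^\top$, the matrix $\bm{B}$ whose rows are $\bm 0,\bm e_1,\bm e_2,\bm e_2$, $\bm{f}^\omega(\bm{x})=(\bm{x}^\top\bmh z^\omega,-P,0,1)^\top$, $\bm{F}(\bm{x})$ the matrix whose only nonzero row is $\bm{x}^\top$, and the cone $\ck=\ck_{\ell_a}\times\bbr_+$. The box support $\cz^\omega=[\underline{\bm z}^\omega,\bar{\bm z}^\omega]$ is encoded by $\bm{H}=[\bm I_n;-\bm I_n]$, which naturally splits the support multipliers into upper- and lower-bound blocks $\bar{\bm\beta}^\omega,\underline{\bm\beta}^\omega\in\bbr_+^n$, and the budgeted norm $p_\Gamma$ of Example~\ref{example:budgeted_norm} fits Assumption~\ref{assm:penaltysupport} with its own $(\bm M,\bm N,\bm s=\bm 0,\bm t)$ and auxiliary variable $\bm\mu^\omega\in\bbr^n$ (the $\eta^\omega$ variable drops out because $p_\Gamma$ is a norm). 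Substituting these objects into Problem~\eqref{opt:satisficing_dd_dual} and enforcing the affine dual adaptations $\bar{\bm\beta}^\omega(\bm\rho)=\bar{\bm\beta}^{\omega,0}+\sum_{j=1}^{4}\bar{\bm\beta}^{\omega,j}\rho_j$, and analogously for $\underline{\bm\beta}^\omega(\bm\rho)$ and $\bm\mu^\omega(\bm\rho)$, then reproduces line-by-line every linear-in-$\bm\rho$ inequality in Problem~\eqref{eq:satisficingexputil3}: the single $\bm{1}^\top\bm\mu^\omega(\bm\rho)\leq\Gamma k$ row comes from the budgeted norm's $\sum_i\mu_i\leq\Gamma$ scaled by $k$, and the two-sided inequalities on the components of $\bm\mu^\omega$ come from the pair $\pm\lambda_i-\mu_i\leq 0$. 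Tractability and the safe-approximation property are then inherited from Theorem~\ref{thm:satisficing_dual_ldr_feasiblity}, whose complete and bounded recourse hypothesis for $g^\omega$ was verified in Theorem~\ref{thm:convextoconic}.

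The remaining work is the equivalence at the end, which acts as the robust counterpart for every linear-in-$\bm\rho$ constraint $\bm v^\top\bm\rho\leq w$ with $\bm\rho\in\mathcal P$. The equality $\bm{B}^\top\bm\rho=\bm{d}$ forces $\rho_2=1$ and $\rho_3+\rho_4=P$, while $\bm\rho\in\ck^\star$ says $(\rho_1,1,\rho_3)\in\ck_{\ell_a}^\star$ and $\rho_4\geq 0$. Writing $s=P-\rho_3\geq 0$ and extracting the constants $v_2+v_4P$, I would recast the worst case as the strictly feasible conic program $\max\{v_1\rho_1+(v_3-v_4)\rho_3 : \rho_2=1,\ \rho_3+s=P,\ (\rho_1,\rho_2,\rho_3)\in\ck_{\ell_a}^\star,\ s\geq 0\}$. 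Conic strong duality then yields $\min\{y_1+Py_2 : (-v_1,y_1,y_2-v_3+v_4)\in\ck_{\ell_a},\ y_2\geq 0\}$, and the substitution $y=y_2+v_4$ rewrites the cone membership as $y\geq v_3$ together with $(y-v_3)(\exp(-av_1/(y-v_3))-1)/a\leq y_1$ (plugging in $\ell_a(t)=(e^{at}-1)/a$), and $y\geq v_4$ from $y_2\geq 0$. Requiring this dual value plus $v_2+v_4 P$ not to exceed $w$ cancels the $v_4P$ term and delivers exactly the claimed inequality.

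The main obstacle is precisely this last conic duality step: the exponential disutility makes $\ck_{\ell_a}$ a non-polyhedral cone, so I would have to verify strict primal feasibility carefully to rule out a duality gap, and track the change of variables $y=y_2+v_4$ that compactifies the dual into the stated form. Once the equivalence is in hand, it is applied term-by-term to each linear-in-$\bm\rho$ constraint of Problem~\eqref{eq:satisficingexputil3} to obtain a finite conic formulation and complete the derivation.
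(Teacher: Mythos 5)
Your proposal is correct and follows essentially the same route as the paper: instantiate the general dual affine adaptation~\eqref{opt:satisficing_dd_dual} with the perspective-cast conic data of $g^\omega$, the box support, and the budgeted-norm penalty (with $\eta$ dropping out per Proposition~\ref{prop:polypenaltynorm}), expand the affine decision rules to get~\eqref{eq:satisficingexputil3}, and then derive the final equivalence by conic strong duality over $\mathcal{P}$, justified by the strict feasibility guaranteed by complete recourse. The only difference is cosmetic: the paper dualizes $\max_{\bm\rho\in\mathcal P}\bm v^\top\bm\rho$ directly into the recourse problem $\min\{y_1+Py_2 \mid \bm B\bm y\succeq_{\mathcal K}\bm v\}$ and then eliminates $y_1$ and renames $y_2$ as $y$, whereas you first substitute the equality constraints $\rho_2=1$, $\rho_3+\rho_4=P$ and apply the shift $y=y_2+v_4$, arriving at the same inequalities.
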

\begin{proof}{Proof.}
   The proof is relegated to Appendix \ref{appendix:proof_of_results}.
   \qed 
\end{proof}


To evaluate the performance of the portfolio, we consider the distribution $\bbp^\star$, $\bmt z \sim \bbp^\star$, such that each $\tilde{z}_i$ is an independently distributed two-point random variable,
$$
\bbp^\star\left[ \tilde{z}_i = z \right] 
= \left\{ \begin{array}{ll}
\beta_i & \mbox{if $z = \mu_i + \sigma_i\frac{\sqrt{\beta_i(1-\beta_i)}}{\beta_i}$},\\
1-\beta_i & \mbox{if $z = \mu_i - \sigma_i\frac{\sqrt{\beta_i(1-\beta_i)}}{1-\beta_i}$}. 
\end{array}\right. 
$$
Observe that $\epstar{\tilde{z}_i} = \mu_i$ and $\epstar{(\tilde{z}_i-\mu_i)^2}=\sigma_i^2$ represent the mean and the variance of the $i^\text{th}$ asset. The skewness of the $i^\text{th}$ asset would depend on $\beta_i$; higher values imply more negative skewness and result in large losses and small upside gains. We consider $n = 8$,
\begin{equation*}
\begin{aligned}
    \bm\mu &\;=\; \left[ 0.12,\; 0.16,\; 0.14,\; 0.13,\; 0.15, \; 0.12,\; 0.14,\; 0.15 \right]^\top, \\
    \bm\sigma &\;=\; \left[ 0.18,\; 0.22,\; 0.20,\; 0.16,\; 0.14,\; 0.10,\; 0.14,\; 0.19 \right]^\top,
\end{aligned}    
\end{equation*}
and $\beta_i$, $i \in [n]$, is chosen as  
$\beta_i = \displaystyle\frac{1}{2}\left(1+\frac{i}{n+1}\right)$.
Hence, the return distributions for stocks with
high index numbers in the portfolio are more negatively skewed than those with low index numbers.
For a given $\bm x \in \cx$, we can evaluate the actual certainty equivalent of the portfolio return as follows
\begin{equation}\label{eq:port_certeq}
\begin{aligned}
&u_a^{-1}\left(\epstar{ u_a(\bm{x}^\top \tilde{\bm{z}})) }\right) \\
&\quad= \displaystyle  -\frac{1}{a}\log\left(\epstar{(\exp(-a\bm{x}^\top \bmt z)}\right)\\
&\quad= \displaystyle \bm x^\top \bm \mu -\frac{1}{a} \sum_{i \in [n]} 
 \log\left( \beta_i \exp\left(\frac{-ax_i \sigma_i\sqrt{\beta_i(1-\beta_i)}}{\beta_i}\right) + (1-\beta_i) \exp\left(\frac{ax_i \sigma_i\sqrt{\beta_i(1-\beta_i)}}{1-\beta_i}\right) \right).
\end{aligned}
\end{equation}
 For benchmarking purposes, we first generate $100$ independent realizations of $\tilde{\bm{z}}$ from the two-point distribution as described above. For a given $a \geq 0$, we denote the optimal solutions (computed using the generated samples) of the empirical and stochastic optimization problems in~\eqref{eq:portemp} and ~\eqref{eq:portstoc} by $\bm{x}^{\rm emp}_a$ and $\bm{x}^{\rm stoc}_a$ (also referred to as the Markowitz solution), respectively. We then solve the robust satisficing problem using the safe approximation from~\eqref{eq:satisficingexputil3}, by conservatively choosing the parameters of the support set as
 $$\underline{z}_j = \underset{i \in [n], \omega \in [\Omega]}{\min}\;\left\{{\hat{z}_{i}^{\omega}}\right\}-0.1 \quad \text{and} \quad \bar{ z}_j=\underset{i \in [n], \omega \in [\Omega]}{\max}\;\left\{{\hat{z}_{i}^{\omega}}\right\}+0.1. $$ 
 for all $j \in [n]$, \emph{i.e.}, all $z_j$'s share the same support. Even though this support is unnecessarily large, the robust satisficing solutions, as we will see, are not overly conservative. Then, we assign 
 $\underline{w}= -\bar{z}_1, \bar{w}= -\underline{z}_1$
and choose the budgeted norm parameter $\Gamma \in \{1,3,6,8\}$. For each such $\Gamma$, we select four target risk values above the optimal empirical objective $Z_0$, \emph{i.e.}, $\tau =Z_0+\lambda$ for $\lambda= \{0.005,0.01,0.04,0.07\}$. Denote the respective optimal satisficing solutions by $\bm{x}_{a ,\Gamma,\tau}^{\rm st}$.  Note that while we provide a range of values of $\Gamma$ and $\lambda$ in our numerical studies, in a data-driven environment, these  hyper-parameters should be determined using cross-validation techniques  \citep[see, {\em e.g.},][among others]{esfahani2018data,shafieezadeh2019regularization}. 

All optimization problems are solved by using Mosek 9.2.38 together with YALMIP modeling language \citep{yalmip_2004}. After obtaining $\bm{x}_{a ,\Gamma,\tau}^{\rm st}, \bm{x}^{\rm emp}$ and $\bm{x}^{\rm stoc}$ for varying $a, \Gamma, \tau$, the certainty equivalent of each investment strategy is then computed using~\eqref{eq:port_certeq}. The obtained results are shown in Figure~\ref{fig:portexp}. For $a=1$, it can be observed that the certainty equivalents for the empirical and stochastic models coincide. Indeed, when the investor is mildly  risk-averse, both resulting portfolios invest all wealth in the asset with the highest empirical mean return. The satisficing model, on the other hand, opts for a more diversified investment and yields a superior certainty equivalent for all values of $\Gamma$ and $\tau$. The performance gaps between the satisficing model and the empirical and the stochastic models appear to increase with $a$. Amongst these three different approaches, the stochastic model yields the most disappointing certainty equivalents because it suffers from trusting the samples too much and from imposing an incorrect shape for the asset return distribution.

\begin{figure}[H]
    \includegraphics[scale=0.55]{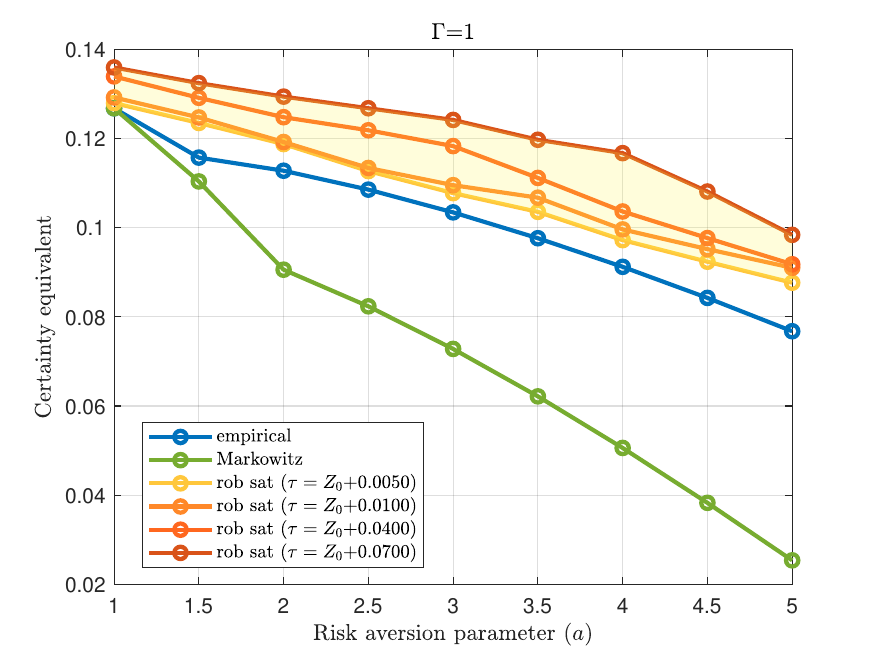}
    \includegraphics[scale=0.55]{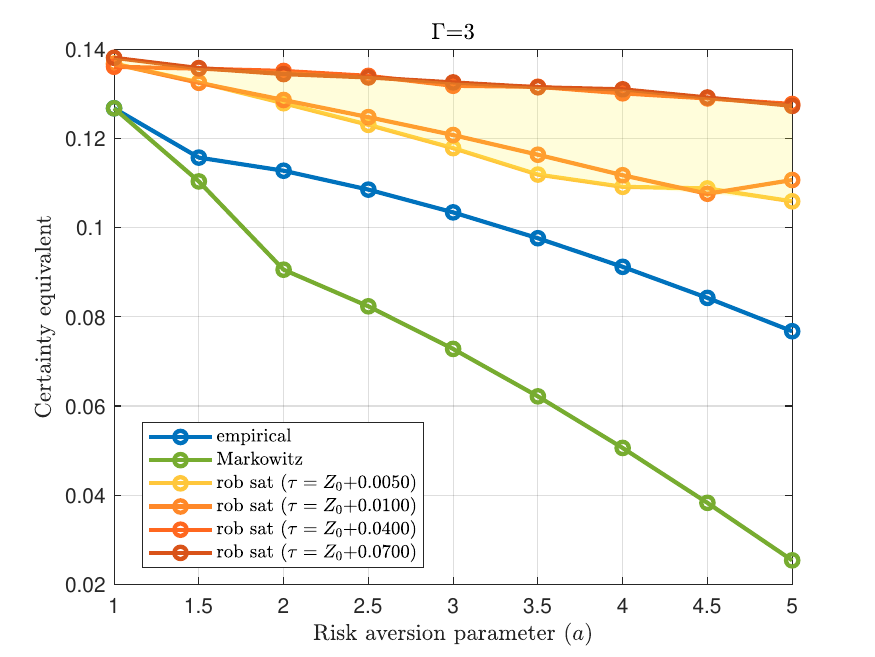} \\
    \includegraphics[scale=0.55]{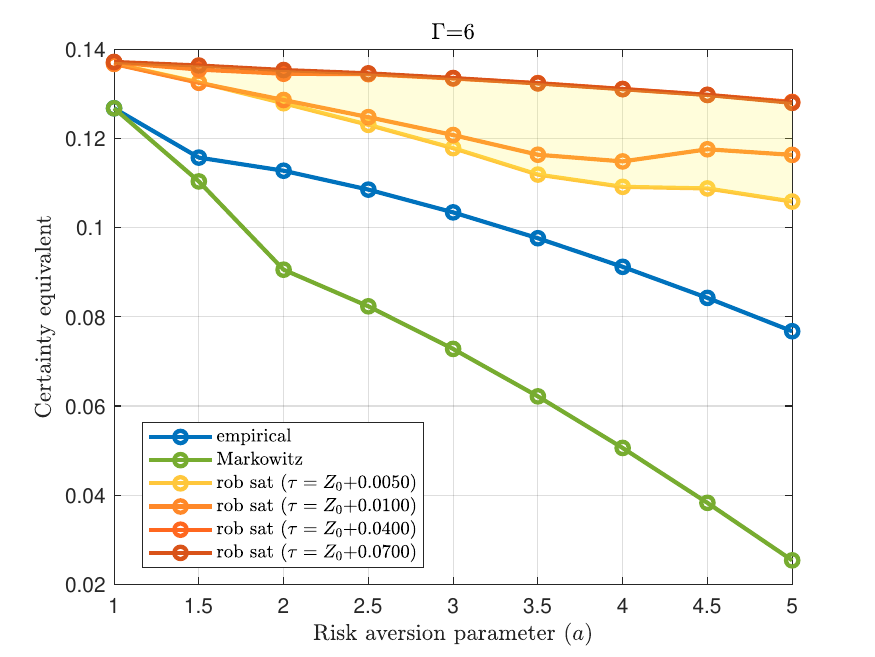}
    \includegraphics[scale=0.55]{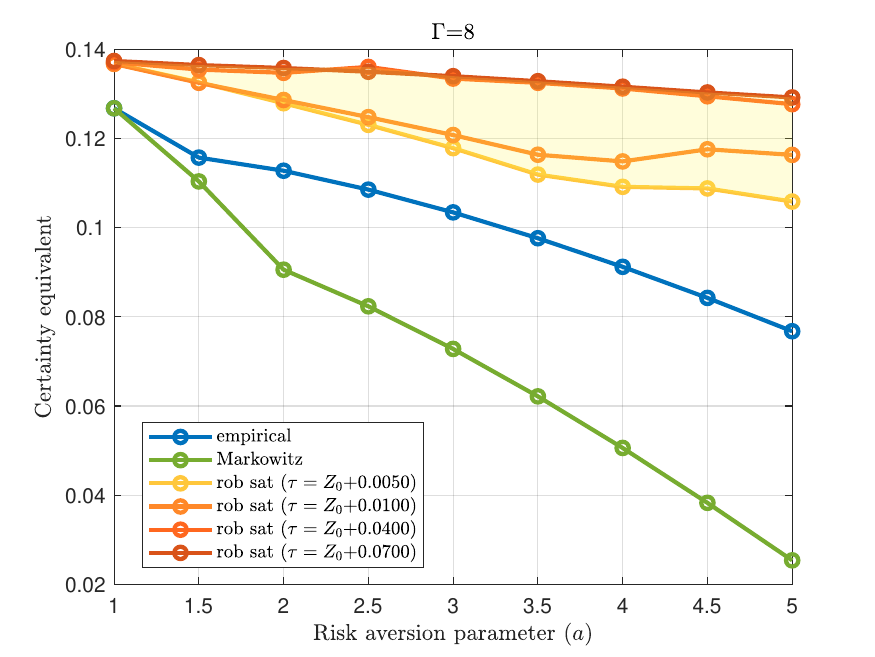}
    \caption{Certainty equivalent comparison between the empirical, stochastic (Markowitz) and robust satisficing investment models.}
    \label{fig:portexp}
\end{figure}

\paragraph{Acknowledgements:} We are grateful to Jianzhe Zhen for sharing the source code used in \cite{Roos2020} and Peng Xiong for the valuable discussions regarding RSOME. 

\bibliography{bibliography}
\bibliographystyle{ormsv080}


\newpage
\renewcommand{\theHsection}{A\arabic{section}}

\begin{APPENDICES}
\section{Proofs of Results}\label{appendix:proof_of_results}

\begin{proof}
{Proof of Proposition~\ref{prop:RObound}.}

	According to the Chernoff bound, for all $t>0$, we have
	\begin{equation*}
		\displaystyle \bbp\left[\sum_{i\in[n_z]}|\tilde{z}_i| \leq (1-\epsilon)\mu  \right] \leq \exp(t(1-\epsilon)\mu ) \prod_{i\in[n_z]}\ep{\exp(-t|\tilde{z}_i|)},
	\end{equation*}
	where 
	$$
	\mu = \sum_{i \in [n_z]} \ep{|\tilde z_i|} \geq \sum_{i \in [n_z]} \ep{\tilde z^2_i}= n_z\theta. 
	$$
	Notice that $|\tilde{z}_i|$ has a support set $[0,1]$. Let $\tilde{v}_i \sim \mbox{Bernoulli}(p_i)$ be a Bernoulli random variable with the same mean as $|\tilde{z}_i|$. Then,
	\begin{equation}
	\label{eqn:mgf_bound}
		\ep{\exp(-t|\tilde{z}_i|)} \leq \ep{\exp(-t\tilde{v}_i)}.
	\end{equation}
	We first prove the above inequality. For any random variable $\tilde{v}$ with mean $\hat{v}$ and a support set $[0,1]$, we consider the following distributionally robust optimization problem: 
	\begin{equation*}
		\begin{array}{rcll}
			Z(t) = & \displaystyle \sup_{\bbp\in\cq} & \displaystyle \ep{\exp(-t\tilde{v})},
		\end{array}
	\end{equation*}
	where the ambiguity set is $\cq :=\{\bbp\in\cp_0([0,1]) ~|~ \tilde{v}\sim \bbp, ~\ep{\tilde{v}}=\hat{v}\}$. By weak duality, we have $Z(t)\leq Z_d(t)$, where 
	\begin{equation*}
		\begin{array}{rcll}
			Z_d(t) = & \min & r_0 + r_1\hat{v} &
			\\
			& {\rm s.t.} & r_0 \geq \exp(-tv) - r_1v & ~~\forall v\in[0,1] \\
			& & r_0, r_1 \in \mathbb{R}.
		\end{array}
	\end{equation*}
	Because $\exp(-tv) - r_1v$ is convex in $v$, the maximum occurs at the boundary $v=0$ or $v=1$. Therefore, the above problem is equivalent to
	\begin{equation*}
		\begin{array}{rcll}
			Z_d(t) = & \min & r_0 + r_1\hat{v} &
			\\
			& {\rm s.t.} & r_0 \geq \exp(-t) - r_1 &
			\\
			& & r_0 \geq 1 & \\
			& & r_0, r_1 \in \mathbb{R}.
		\end{array}
	\end{equation*}	
	This is a linear optimization problem and the optimal value is $Z_d(t) = 1-\hat{v} + \hat{v}\exp(-t)$, which is the moment generating function of a Bernoulli random variable with mean $\hat{v}$. Therefore, $Z(t)\leq Z_d(t)$ leads to Inequality~\eqref{eqn:mgf_bound}.
	
	By Inequality~\eqref{eqn:mgf_bound}, we can write
	\begin{equation*}
	\begin{array}{rcl}
		\displaystyle \bbp\left[\sum_{i\in[n_z]}|\tilde{z}_i| \leq (1-\epsilon)\mu \right] &\leq & \displaystyle \inf_{t>0} \exp(t(1-\epsilon)\mu ) \prod_{i\in[n_z]}\ep{\exp(-t|\tilde{z}_i|)} \\
		&\leq &  \displaystyle \inf_{t>0} \exp(t(1-\epsilon)\mu ) \prod_{i\in[n_z]}\ep{\exp(-t\tilde{v}_i)}
		\\
		&\leq &  \displaystyle \exp\left( -\frac{\mu \epsilon^2}{2} \right)
	\end{array}
	\end{equation*}
	where the last inequality follows from the multiplicative Chernoff bound for the sum of independent Bernoulli random variables, which is a known result.
	
	Finally, the proposition follows because
	\begin{equation*}
		\begin{array}{rcl}
		\displaystyle  \bbp\left[\bmt{z}\in \cal{U}_{c\sqrt{n_z}} \right]  &=& \displaystyle  \bbp\left[\sum_{i\in[n_z]}|\tilde{z}_i| \leq c\sqrt{n_z} \right] 
		\\
		&=& \displaystyle  \bbp\left[\sum_{i\in[n_z]}|\tilde{z}_i| \leq \left(1-\left(1-\frac{c\sqrt{n_z}}{ \mu}\right)\right)\mu \right] 
		\\
		
			&\leq & \displaystyle \exp\left( -\frac{\mu \left(1-\frac{c\sqrt{n_z}}{\mu}\right)^2}{2} \right)
		\\
		&\leq & \displaystyle \exp\left( -\frac{n_z\theta \left(1-\frac{c\sqrt{n_z}}{n_z\theta }\right)^2}{2} \right)
		\\
		&= & \displaystyle \exp\left( -\frac{\left(\theta\sqrt{n_z}-c\right)^2}{2\theta} \right),
		\end{array}
	\end{equation*}
	where the last inequality follows from $\mu \geq n_z \theta$ and  $c\in (0, \theta \sqrt{n_z})$.	\hfill \Halmos
\end{proof}

\begin{proof}{Proof of Proposition~\ref{prop:RSfeasibility}.} Since the empirical optimization problem is solvable, there exists $\bmh x \in \cx$ such that  
$$
Z_0 = \frac{1}{\Omega}\sum_{\omega \in [\Omega]}g(\bmh x,\bmh z^\omega) =  \frac{1}{\Omega}\sum_{\omega \in [\Omega]}g^\omega(\bmh x,\bm 0).
$$ 
Hence, for any $\tau > Z_0$, let $
\upsilon_\omega  = g^\omega(\bmh{x},\bm{0}) + (\tau-Z_0)$. Observe that
$\frac{1}{\Omega}\sum_{\omega \in [\Omega]} \upsilon_\omega  = \tau$ and $ \upsilon_\omega  > g^\omega(\bmh{x},\bm{0})$. From the forth property of Definition~\ref{def:safeapprox},  there exists $\bar{k}_\omega \in \bbr_+$, $\omega \in [\Omega]$, such that 
$$
(\bmh x,\upsilon_\omega,\bar k_\omega) \in \bar{\mathcal Y}^\omega \qquad \forall \omega \in [\Omega].
$$
Likewise, from  the third property of Definition~\ref{def:safeapprox}, a feasibility solution of  Problem~\eqref{eq:data-drivenRS4} is to choose $k = \max_{\omega \in [\Omega]}\{ \bar k_\omega\}$, together with $\bm x = \bmh x$ and the above $\upsilon_\omega$, $\omega \in [\Omega]$. \qed
\end{proof}

\begin{proof}{Proof of Proposition~\ref{prop:quadratic_satisficing}.}
To avoid clutter, we first drop the dependency of $\bm{A}, \bm{b}, \bm{c}$ on $\bm{x}$, and then we expand the squared Euclidean norm in the robust constraint 
$\|\bm A(\bm{x})\bm z + \bm a(\bm x)\|_2^2  + \bm b(\bm x)^\top \bm z + c(\bm x) \leq k \bm z ^\top \bm  z+\upsilon$,$ \,\, \forall \bm{z}\in \mathcal{E}_{r}$ to equivalently express it as
\begin{equation*}
\left[\begin{array}{c} {1}\\ \bm{z} \end{array}\right]^{\top}
\left[ \begin{array}{cc}
 {-c+\upsilon -\bm{a}^{\top}\bm{a}} & {-\left(\bm{A}^{\top}\bm{a}+\frac{1}{2}\bm{b}\right)^{\top}} \\ 
{-\left(\bm{A}^{\top}\bm{a}+\frac{1}{2}\bm{b}\right)} & {k\bm I_{n_z}-\bm{A}^{\top}\bm{A}} \\ 
\end{array} 
\right] \left[\begin{array}{c} {1}\\ \bm{z} \end{array}\right]\geq 0 \bm \quad  
\forall \bm z : 
\left[\begin{array}{c} {1}\\ \bm{z} \end{array}\right]^{\top}
\left[ \begin{array}{cc}
r^2 & {\bm{0}^{\top}} \\ 
{\bm{0}} & {-\bm{I}_{n_z}} \\ 
\end{array} 
\right] \left[\begin{array}{c} {1}\\ \bm{z}\\\end{array}\right]\geq \bm{0}.
\end{equation*}
Applying $\mathcal{S}$-lemma leads to another equivalent representation of our quadratic constraint which is
\begin{equation*}
\begin{aligned}
&~~\exists \lambda \geq 0: ~ 
\left[ \begin{array}{cc}
{-c + \upsilon +\bm a^{\top} \bm a} & {-\left(\bm A^{\top} \bm a+\frac{1}{2} \bm b\right)^{\top}} \\ 
{-\left(\bm A^{\top} \bm a+\frac{1}{2} \bm b\right)} & {k\bm I_{n_z} -\bm A^{\top} \bm A}
\end{array}\right]-\lambda \left[ 
\begin{array}{cc}
{r^2} & {\bm{0}^{\top}} \\ 
{\bm{0}} & {-\bm I_{n_z}}\end{array}\right] \succeq \bm{0} \\
\Longleftrightarrow 
&~~\exists \lambda \geq 0: ~
\left[ \begin{array}{cc}
{-c+\upsilon +\lambda  r^2} & {-\frac{1}{2} \bm b^{\top}} \\
{-\frac{1}{2} \bm{b}} & {(k+\lambda) \bm{I}_{n_z}}
\end{array}\right]-\left[ \begin{array}{c}{\bm a^{\top}} \\ {\bm A^{\top}}
\end{array}\right] \bm{I}_{n_a}\left[ \begin{array}{c}{\bm a^{\top}} \\ {\bm A^{\top}}
\end{array}\right]^{\top} \succeq \bm 0.
\end{aligned}
\end{equation*}
Finally, using Schur's complement and recovering the dependency of $\bm{A}, \bm{b}, c$ on $\bm{x}$ yields the desired result. 
For the second half of the proof, as there exists $\hat{k} > 0$ such that
\begin{equation*}
\begin{aligned}
    \hat{k} \left[ \begin{array}{cc}
    \bm{I}_{n_a} & \bm{a}(\bm{x}) \\
    \bm{a}(\bm{x})^\top & \upsilon-c(\bm{x}) 
    \end{array} \right] \succeq 
    \left[ \begin{array}{c}
         \bm{A}(\bm{x}) \\ -\frac{1}{2}\bm{b}(\bm{x})^\top 
    \end{array} \right]
    \left[ \begin{array}{c}
         \bm{A}(\bm{x}) \\ -\frac{1}{2}\bm{b}(\bm{x})^\top
    \end{array} \right]^\top.
\end{aligned}
\end{equation*}
By the virtue of Schur's complement, choosing a vanishing $\lambda$ and $k = \hat{k}$ satisfies the semidefinite constraint presented in the proposition. The proof is now complete. \qed 
\end{proof}

\begin{proof}{Proof of Proposition~\ref{prop:BigM}.}  We first consider $0\leq v \leq \bar v$.
Since $\ell$ is a convex function, we have  
$$
\begin{array}{rcll}
&& \ell'(\bar v)\bar v- \ell(\bar v)  - ( \ell'(v)v-\ell(v)) \\
&=& \ell'(\bar v)\bar v- \ell'(v)v - (\ell(\bar v) -\ell(v)) \\
&\geq &  \ell'(\bar v)\bar v- \ell'(\bar v)v - (\ell(\bar v) -\ell(v))& \mbox{: since $\ell'(v) \leq \ell'(\bar v)$ and $v\geq 0$}\\
&=&\ell'(\bar v)(\bar v-v) - (\ell(\bar v) -\ell(v)) \\
&\geq& (\ell(\bar v) - \ell(v)) - (\ell(\bar v) -\ell(v)) & \mbox{: since $\ell$ is convex} \\
&=& 0.
\end{array}
$$
Hence, if $0\leq \underline w\leq \bar w$, we have 
$$
 \max_{ v \in [\underline w,\bar w]} \left\{ \ell'( v)v - \ell( v) \right\}=  \ell'( \bar w)\bar w - \ell( \bar w)=\max_{ v \in \{\underline w,\bar w\}} \left\{ \ell'( v)v - \ell( v) \right\}.
$$
Likewise,  consider $0\geq v \geq \underline v$ as follows,
$$
\begin{array}{rcll}
&& \ell'(\underline{v})\underline{v}- \ell(\underline{v})  - ( \ell'(v)v-\ell(v)) \\
&=& \ell'(\underline{v})\underline{v}- \ell'(v)v - (\ell(\underline{v}) -\ell(v)) \\
&\geq &  \ell'(\underline{v})\underline{v}- \ell'(\underline{v})v - (\ell(\underline{v}) -\ell(v))& \mbox{: since $\ell'(v) \geq \ell'(\underline{v})$ and $v\leq 0$}\\
&=&\ell'(\underline{v})(\underline{v}-v) - (\ell(\underline{v}) -\ell(v)) \\
&\geq& (\ell(\underline{v}) - \ell(v)) - (\ell(\underline{v}) -\ell(v)) & \mbox{: since $\ell$ is convex} \\
&=& 0.
\end{array}
$$
Hence, if $\underline w\leq \bar w \leq 0$, we have 
$$
 \max_{ v \in [\underline w,\bar w]} \left\{ \ell'( v)v - \ell( v) \right\}=  \ell'( \underline w)\underline w - \ell( \underline w)=\max_{ v \in \{\underline w,\bar w\}} \left\{ \ell'( v)v - \ell( v) \right\}.
$$
Finally, if $\underline w\leq 0\leq \bar w$, we would also have
$$
 \max_{ v \in [\underline w,\bar w]} \left\{ \ell'( v)v - \ell( v) \right\}=\max_{ v \in \{\underline w,\bar w\}} \left\{ \ell'( v)v - \ell( v) \right\}.
$$
\qed
\end{proof}

\begin{proof}{Proof of Proposition~\ref{prop:polypenaltynorm}}
Suppose that $p$ is a norm function and define
\begin{equation*}
    \mathcal{J} = \bigcup_{\bm{\zeta} \in \mathbb{R}^{n_z}} \argmax_{(\bm\lambda,\eta) \in \mathcal{V}} \left\{ \bm\lambda^\top \bm{\zeta} - \eta \right\}.
\end{equation*}
We will first show that $(\bm\lambda^\star,\eta^\star) \in \mathcal{J}$ implies $\eta^\star = 0$, that is, $\eta$ is superfluous in the optimization problem underlying the definition of $p$. Suppose otherwise for the sake of a contradiction that there exists $(\bm{\zeta}^\star,\bm{\lambda}^\star,\eta^\star) \in \mathbb{R}^{n_z} \times \mathbb{R}^{n_z} \times \mathbb{R}_{++}$ such that $(\bm\lambda^\star,\eta^\star) \in \argmax_{(\bm{\lambda},\eta) \in \mathcal{V}} \left\{ \bm\lambda^\top \bm{\zeta}^\star - \eta \right\}$. It follows that $p(\bm{\zeta}^\star) = (\bm\lambda^\star)^\top\bm{\zeta}^\star - \eta^\star$ and, as $p$ is a norm, $p(2\bm{\zeta}^\star) = 2(\bm\lambda^\star)^\top\bm{\zeta}^\star - 2\eta^\star$. Note also that 
$$
p(2\bm{\zeta}^\star) = \max_{(\bm\lambda,\eta) \in \mathcal{V}} \left\{ \bm\lambda^\top (2\bm{\zeta}^\star) - \eta \right\} \geq (\bm\lambda^\star)^\top (2\bm{\zeta}^\star) - \eta^\star.
$$ 
By comparing $p(2\bm{\zeta}^\star)$ with its lower bound, we find $\eta^\star \leq 0$, reaching hence a contradiction. In conclusion, $\eta$ always vanish at optimality, and we can assume that $\bm{s} = \bm{0}$ without any loss of generality:
\begin{equation*}
    p(\bm{\zeta}) = \max_{\bm\lambda \in \bbr^{n_z}, \bm\mu \in \bbr^{n_\mu}} \left\{ \bm\lambda^\top\bm{\zeta} \mid \bm{M}\bm\lambda + \bm{N}\bm\mu \leq \bm{t}  \right\}.
\end{equation*}
Since $p(\bm \zeta) = p(-\bm \zeta)$ for all $\bm \zeta \in \bbr^{n_z}$, the polytope $\mathcal Q_1 = \left\{ \bm \lambda \mid \bm{M}\bm\lambda + \bm{N}\bm\mu \leq \bm{t}  \right\}$ must be identical to the polytope $\mathcal Q_2= \left\{ \bm \lambda \mid -\bm{M}\bm\lambda + \bm{N}\bm\mu \leq \bm{t}  \right\}$. Otherwise, suppose $\bm \lambda^* \in \mathcal Q_1$ but $\bm \lambda^* \notin \mathcal Q_2$, then by a separating hyperplane argument there would exist a vector $\bm \zeta^* \in \bbr^{n_z}$ such that 
$$
p(\bm \zeta^*)= \max_{\bm \lambda \in \mathcal Q_1}\{ \bm \lambda^\top \bm \zeta^*\}  \geq   {\bm \lambda^*}^\top \bm \zeta^* >  \max_{\bm \lambda \in \mathcal Q_2}\{ \bm \lambda^\top \bm \zeta^*\}=p(-\bm \zeta^*),
$$
which is a contradiction. Likewise, similar contradiction can be established if  $\bm \lambda^* \in \mathcal Q_2$ but $\bm \lambda^* \notin \mathcal Q_1$. Hence, $\mathcal Q_1 = \mathcal Q_2$. Next, we derive the dual norm $p^\star$:
\begin{equation*}
\begin{aligned}
    p^\star(\bm{\zeta}) &= \max_{\bm\omega \in \bbr^{n_z}} \left\{ \bm\omega^\top \bm{\zeta} \mid p(\bm{\omega}) \leq 1 \right\} \\
    &= \max_{\bm\omega \in \bbr^{n_z}} \left\{ \bm\omega^\top \bm{\zeta} \mid \bm\lambda^\top \bm\omega \leq 1,\; \forall (\bm\lambda, \bm\mu): \bm{M}\bm\lambda + \bm{N}\bm\mu \leq \bm{t} \right\} \\
    &= \max_{\bm\omega \in \bbr^{n_z}, \bm\alpha \in \bbr^{n_m}_+} \left\{ \bm\omega^\top \bm{\zeta} \mid \bm\alpha^\top\bm{t} \leq 1, \; \bm{M}^\top\bm\alpha = \bm\omega, \; \bm{N}^\top \bm\alpha = \bm{0} \right\} \\
    &= \max_{\bm\alpha \in \bbr^{n_m}_+} \left\{ \bm\alpha^\top\bm{M\zeta} \mid \bm\alpha^\top \bm{t}\leq 1, \; \bm{N}^\top \bm\alpha = \bm{0} \right\} \\ 
    &= \min_{\bm\mu \in \bbr^{n_\mu}, \delta \in \bbr_+} \left\{ \delta \mid \bm{M\zeta} + \bm{N\mu} \leq \delta \bm{t} \right\},
\end{aligned} 
\end{equation*}
where the third maximization problem constitutes a robust counterpart of the second and the fifth equation holds due to the standard linear optimization duality argument. \qed 
\end{proof}

\begin{proof}{Proof of Proposition~\ref{prop:innermax}.}
We first consider $k>0$. From the definition of $p$ in Assumption~\ref{assm:penaltysupport}, we find
\begin{equation}\label{opt:innermax_z}
    \max_{\bm{z} \in \mathcal{Z}} \left\{\bm{a}^\top\bm{z} - k p(\bm{z}) \right\} \;=\; \max_{\bm{z} \in \cz} \min_{(\bm \lambda,\eta) \in \mathcal V}  \left\{(\bm{a} - k\bm \lambda)^\top \bm z + k\eta    \right\}.
\end{equation}
Since this is linear optimization problem, by the standard linear optimization duality argument, this latter maximization problem can be expressed as
\begin{equation*}
\begin{aligned}
    &\max_{\bm{z} \in \cz} \min_{(\bm \lambda,\eta) \in \mathcal V}  \left\{(\bm{a} - k\bm \lambda)^\top \bm z + k\eta    \right\} \\
    =& \min_{(\bm \lambda,\eta) \in \mathcal V}  \max_{\bm{z} \in \cz} \left\{(\bm{a} - k\bm \lambda)^\top \bm z + k\eta    \right\}\\
    =&\min_{ (\bm \lambda,\eta) \in \mathcal V, \bm \beta}  \; \left\{ \bm{\beta}^\top\bm{h} + k\eta \mid  \bm\beta \geq \bm{0},\; \bm H^\top\bm \beta = \bm a - k \bm \lambda  \right\} \\
    =&\min_{{\bm\beta}\geq \bm 0,\eta \geq 0} \; \left\{ \bm{\beta}^\top\bm{h} + k\eta  \mid  (\bm a -\bm H^\top\bm \beta,k\eta,k) \in \bar{\mathcal V}  \right\} \\
    =&\min_{{\bm\beta}\geq \bm 0,\eta \geq 0} \; \left\{ \bm{\beta}^\top\bm{h} + \eta  \mid  (\bm a -\bm H^\top\bm \beta,\eta,k) \in \bar{\mathcal V}  \right\},
\end{aligned}
\end{equation*}
where the first interchange between minimization and maximization is justified because $\mathcal{V}$ is bounded, and this concludes the desired equivalence. Next, we consider the case when $k=0$. Since $\mathcal V$ is bounded, we must have 
$\{ (\bm \lambda,\eta) \mid  (\bm \lambda,\eta,0) \in \bar{\mathcal V} \} = \{ (\bm 0,0)\}$.
Hence, 
$(\bm a -\bm H^\top\bm \beta,\eta,0) \in \bar{\mathcal V} $
is equivalent to $\eta =0$ and $\bm H^\top\bm \beta = \bm a$. By noting that $\max_{\bm{z} \in \mathcal{Z}} \bm{a}^\top \bm{z} = \min_{\bm\beta \in \mathbb{R}^{n_h}_+} \{ \bm\beta^\top \bm{h} \mid \bm{H}^\top \bm\beta = \bm{a} \}$, the first half of the proposition follows.

If the penalty function $p$ is a norm, from the derivation of the dual norm in Proposition \ref{prop:polypenaltynorm}, we have
\begin{equation*}
\begin{aligned}
    &\min_{{\bm\beta} \in \mathbb{R}^{n_h}_+}  \left\{ \bm{\beta}^\top\bm{h} \mid  p^\star(\bm a -\bm H^\top\bm\beta) \leq k \right\} \\
    =&\min_{{\bm\beta} \in \mathbb{R}^{n_h}_+, \bm\mu \in \bbr^{n_\mu}, \delta \in \bbr_+}  \left\{ \bm{\beta}^\top\bm{h} \mid \delta \leq k, \;  \bm{M}(\bm a -\bm H^\top\bm\beta) + \bm{N\mu} \leq \delta \bm{t} \right\} \\
    =&\min_{{\bm\beta} \in \mathbb{R}^{n_h}_+, \bm\mu \in \bbr^{n_\mu}}  \left\{ \bm{\beta}^\top\bm{h} \mid  \bm{M}(\bm a -\bm H^\top\bm\beta) + \bm{N\mu} \leq k\bm{t} \right\},
\end{aligned}
\end{equation*}
where the second equality follows because, to ensure that $(\bm{0},0) \in \mathcal{V}$, $\bm{t}$ must be non-negative making the constraint $\delta \leq k$ binding at optimality. Noting from the previous half of this proposition and the proof of Proposition \ref{prop:polypenaltynorm} that, as $\bm{s}$ vanishes when the penalty function is a norm, the latter minimization problem is equivalent to $\max_{\bm{z} \in \mathcal{Z}} \left\{\bm{a}^\top\bm{z} - k p(\bm{z}) \right\}$ completes the proof.\qed 
\end{proof}

\begin{proof}{Proof of Proposition~\ref{prop:satisficing_ldr_feasibility}.}

Its suffices to show that there a feasible solution in $\bar{\mathcal Y}_P$, for any $\bm x \in \cx$ and $\upsilon > g(\bm x,\bm 0)$ Let $\bm q$ be the optimal $\bm y$ solution to Problem~\eqref{opt:nominal_twostage3.3} for $\bm z = \bm 0$ that yields the objective $g(\bm x,\bm 0)$, and $k=\bm{d}^\top\hat{\bm{q}}^\dag$ where $\hat{\bm{q}}^\dag \in \mathbb{R}^{n_y}$ satisfies $\bm{B}\hat{\bm{q}}^\dag \geq \bm{0}$ and $\bm{B}\hat{\bm{q}}^\dag \geq \max_{i \in [n_f], j \in [n_z]} \vert \bm{F}_{ij}(\hat{\bm{x}}) \vert \cdot \bm{1}$. 
Note that $\hat{\bm{q}}^\dag$ always exists because of complete recourse, which we assume. The suggested solution robustly satisfies first inequality requirement in~\eqref{opt:satisficing_nldr} because
\begin{equation*}
     \bm{c}^\top \bm{x} + \bm{d}^\top \left( \bm{q} + \bm{q}^\dag \Vert \bm{z} \Vert_1 \right) =  \bm{c}^\top \bm{x} + \bm{d}^\top \bm{q} + k \Vert \bm{z} \Vert_1 = g(\bm x,\bm 0) + k \Vert \bm{z} \Vert_1 \leq \upsilon  + k \Vert \bm{z} \Vert_1.     
\end{equation*}
Moreover, the second inequality requirement is also robustly satisfied as
\begin{equation*}
\begin{aligned}
\bm{B}(\bm{q} + \bm{q}^\dag  \Vert \bm{z} \Vert_1) &\geq \bm{f}(\bm{x}) + \bm{B}\bm{q}^\dag  \Vert \bm{z} \Vert_1 \\
    &\geq \bm{f}(\bm{x}) + \Vert \bm{z} \Vert_1 \left\{ \max_{i \in [n_f], j \in [n_z]} \vert \bm{F}_{ij}(\bm{x}) \vert  \right\} \cdot \bm{1} \geq \bm{f}(\bm{x}) + \bm{F}(\bm{x})\bm{z},\quad \forall \bm{z} \in \mathcal{Z}
\end{aligned}
\end{equation*}
where the first two inequalities follow from the feasibility of $\bm {y} = \bm q$ in Problem~\eqref{opt:nominal_twostage3.3} for $\bm z = \bm 0$, and the construction of $\bm{q}^\dag$, respectively. Consequently, $(\bm x, \upsilon, k) \in \bar{\mathcal Y}_P$. Since $\bar{\mathcal Y}_P$ is increasing in $\upsilon$ and $k$, if it is a tractable set, which we show next, then it must be a tractable safe approximation of $\mathcal{Y}$.

Regarding the explicit tractable formulation, we note that the first robust constraint of the feasible set \eqref{opt:satisficing_nldr} can be expressed as
    \begin{equation*}
        \max_{\bm{z}} \left\{ \bm{d}^\top \bm{Q} \bm{z} - (k - \bm{d}^\top \bm{q}^\dag) \Vert \bm z \Vert_1 \mid \; \bm{H}\bm{z} \leq \bm{h} \right\} \leq \tau - \bm{d}^\top \bm{q} - \bm{c}^\top \bm{x}.
    \end{equation*}
    By Proposition~\ref{prop:innermax}, we can replace the maximization problem on the left-hand side of by a minimization problem:
    \begin{equation*}
        \min_{\bm{w}^0} \left\{\bm{h}^\top \bm{w}^0 \mid \; \Vert \bm{H}^\top \bm{w}^0 - \bm{Q}^\top \bm{d} \Vert_\infty \leq k - \bm{d}^\top \bm{q}^\dag ,\; \bm{w}^0 \geq \bm{0} \right\}.
    \end{equation*}
    Similarly, the second robust constraint of the feasible set \eqref{opt:satisficing_nldr} can be written down as
    \begin{equation*}
        \max_{\bm{z}} \left\{ (\bm{F}_i(\bm{x}) - \bm{B}_i\bm{Q}) \bm{z} - \bm{B}_i \bm{q}^\dag \Vert \bm z \Vert_1 : \; \bm{H}\bm{z} \leq \bm{h} \right\} \leq \bm{B}_i \bm{q} - \bm{f}_i(\bm{x}) \qquad \forall i \in [n_f],
    \end{equation*}
    whose left-hand side maximization problem can be replaced by
    \begin{equation*}
        \min_{\bm{w}^i} \left\{ \bm{h}^\top \bm{w}^i: \; \Vert \bm{H}^\top \bm{w}^i - \bm{F}_i^\top (\bm{x}) + \bm{Q}^\top \bm{B}^\top_i  \Vert_\infty \leq \bm{B}_i \bm{q}^\dag, \; \bm{w}^i \geq \bm{0} \right\} \qquad \forall i \in [n_f],
    \end{equation*}
    Finally, the two deterministic linear constraints (namely, $k - \bm{d}^\top \bm{q}^\dag \geq 0$ and $\bm{B}\bm{q}^\dag \geq \bm{0}$) of the feasible set~\eqref{opt:satisficing_nldr} are redundant in view of the feasible set~\eqref{opt:satisficing_nldr_rc} and can therefore be safely omitted. The proof is thus completed. \qed
\end{proof}

\begin{proof}{Proof of Proposition~\ref{prop:portfoliosafeapproximation}.}
We first express the budgeted norm in Example~\ref{example:budgeted_norm} as well as the support sets $\mathcal{Z}^\omega = [\underline{\bm{z}} - \hat{\bm{z}}^\omega, \bar{\bm{z}} - \hat{\bm{z}}^\omega]$, $\omega \in [\Omega]$, as
\begin{equation*}
   p_\Gamma(\bm \zeta) = \max_{\bm\lambda, \bm\mu \in \bbr^{n}} \left\{ \bm\lambda^\top\bm{\zeta} \left|  \underbrace{\left[ \begin{array}{c} \bm{0}^{\top}\\ \bm{I}\\  -\bm{I}\\ \bm{0} \end{array}\right]}_{\bm{M}} \bm\lambda +\underbrace{\left[ \begin{array}{c} \bm{1}^{\top}\\ -\bm{I}\\  -\bm{I}\\ \bm{I} \end{array}\right]}_{\bm{N}}\bm\mu \leq   \underbrace{\left[ \begin{array}{c} \Gamma\\ \bm{0}\\  \bm{0}\\ \bm{1} \end{array}\right]}_{\bm{t}}  \right.\right\}
   \ \ \text{and} \ \ 
   \cz^\omega =\left\{ \bm z \in \mathbb{R}^{n_x} \left|\underbrace{ \left[ \begin{array}{c} \bm{I}\\  -\bm{I}\ \end{array}\right]}_{\bm H} \bm z  \leq \underbrace{\left[ \begin{array}{c} \bar{\bm{z}}^\omega\\  -\underline{\bm{z}}^\omega \end{array}\right]}_{\bm h^\omega}\right.\right\},
\end{equation*}
where $\bar{\bm z}^\omega = \bar{\bm z} - \bmh z^\omega$ and  $\underline{\bm z}^\omega = \underline{\bm z} - \bmh z^\omega$. Then, we express the evaluation function $g^\omega(\bm{x},\bm{z})$, $\omega \in [\Omega]$, as 
\begin{equation*}
\begin{aligned}
 g^{\omega}(\bm{x},\bm{z}) = \; &\text{minimize} && y_1+Py_2 \\
    &\text{subject to} && 
    \underbrace{\left[ \begin{array}{ccc} 
 0& 0 \\
1&0 \\ 
0&1\\
0&1\\
\end{array}\right]}_{\bm{B}}
 \left[ \begin{array}{c}   y_1\\y_2
\end{array}\right] \succeq_{\mathcal{K}}
    \underbrace{ \left[ \begin{array}{c} \bm{x}^\top \bmh z^\omega\\  -P \\0
\\1 \end{array}\right]}_{\bm{f}^\omega(\bm{x})}+ \underbrace{\left[ \begin{array}{c}  \bm{x}^\top\\  \bm{0}^{\top} 
\\\bm{0}^{\top} \\\bm{0}^{\top}\end{array}\right]}_{\bm{F}(\bm{x})}\bm{z}\\
&&& \bm y \in \bbr^2,
\end{aligned}    
\end{equation*}
where 
$\mathcal{K} = \mathcal K_{\ell_a} \times  \bbr_{+}.$
The safe approximation of Problem~\eqref{eq:satisficingexputil}, which is due to~\eqref{opt:satisficing_dd_dual}, reads
\begin{equation}
\label{eq:satisficingexputil2}
\begin{array}{rclll}
    &{\rm minimize} & k  \\
    &{\rm~subject~to} &  \displaystyle  \frac{1}{\Omega} \sum_{\omega \in [\Omega]}\upsilon_\omega \leq \tau 
      \\
     &&\displaystyle    \bm{x}^\top \bmh z^\omega \rho_1 -P\rho_2+ \rho_4+ (\bar{\bm{z}}^{\omega})^\top \bar{\bm\beta}^\omega (\bm\rho)- (\underline{\bm{z}}^{\omega})^\top \underline{\bm\beta}^\omega (\bm\rho) \leq \upsilon_\omega  &  \hspace{-28mm}\forall \bm{\rho}  \in \mathcal{P}, \omega \in [\Omega]  \\
     && \displaystyle  \bm{1}^{\top}\bm \mu^\omega(\bm\rho) \leq \Gamma k& \hspace{-28mm}\forall \bm{\rho}  \in \mathcal{P},  \omega \in [\Omega]\\
     && \displaystyle \bm{x} \rho_1- \bar{\bm\beta}^\omega (\bm\rho) + \underline{\bm\beta}^\omega (\bm\rho) - \bm \mu^\omega(\bm\rho) \leq \bm 0&\hspace{-28mm}  \forall \bm{\rho}  \in \mathcal{P}, \omega \in [\Omega]  \\
     && \displaystyle -\bm{x} \rho_1+ \bar{\bm\beta}^\omega (\bm\rho) - \underline{\bm\beta}^\omega (\bm\rho) - \bm \mu^\omega(\bm\rho) \leq \bm 0&\hspace{-28mm}  \forall \bm{\rho}  \in \mathcal{P}, \omega \in [\Omega]  \\
     &&\bm \mu^\omega(\bm\rho) \leq \bm 1k &\hspace{-28mm}  \forall \bm{\rho}  \in \mathcal{P}, \omega \in [\Omega]  \\
     & &\bar{\bm\beta}^\omega (\bm\rho) \geq \bm{0},\underline{\bm\beta}^\omega (\bm\rho) \geq \bm{0}\; &\hspace{-28mm} \forall \bm\rho \in \mathcal{P}, \omega \in [\Omega] \\
    & & \bm{x} \in \mathcal{X},\;  k \in \mathbb{R}_+,\; \bm \upsilon \in \bbr^{\Omega}, \;\bar{\bm{\beta}}^1, \hdots, \bar{\bm\beta}^\Omega \in \mathcal{L}^{4, n},\;\underline{\bm{\beta}}^1, \hdots, \underline{\bm\beta}^\Omega \in \mathcal{L}^{4, n}, \bm\mu^1, \hdots, \bm\mu^\Omega \in \mathcal{L}^{4, n}.
\end{array}
\end{equation}
where the dual uncertainty set $\mathcal{P}$ is 
\begin{equation*}
\mathcal{P} = \left\{ \bm\rho \in \mathcal{K}^\star \left| \bm{B}^\top\bm\rho = \left[ \begin{array}{c} 1 \\ P \end{array} \right]\right.\right\} .
\end{equation*}
Problem~\eqref{eq:satisficingexputil3} is then obtained by characterizing each $\bar{\bm\beta}^\omega \in \mathcal{L}^{4,n}$ ($\underline{\bm\beta}^\omega \in \mathcal{L}^{4,n}$) as $\bar{\bm\beta}^{\omega,0} + \sum_{j\in [4]} \bar{\bm{\beta}}^{\omega,j}\rho_j$ for some $\bar{\bm\beta}^{\omega,0}, \hdots, \bar{\bm\beta}^{\omega,4} \in \mathbb{R}^n$ ($\underline{\bm\beta}^{\omega,0} + \sum_{j\in [4]} \underline{\bm{\beta}}^{\omega,j}\rho_j$ for some $\underline{\bm\beta}^{\omega,0}, \hdots, \underline{\bm\beta}^{\omega,4} \in \mathbb{R}^n$). Analogous treatment is applied for each $\bm\mu^\omega \in \mathcal{L}^{4,n}$.

To complete the proof, we finally note that
\begin{equation*}
\begin{aligned}
    \forall \bm \rho \in \mathcal P: \bm v^\top \bm \rho \leq w 
    \quad &\Longleftrightarrow \quad 
    \max_{\bm\rho \in \mathcal{P}} \bm{v}^\top \bm\rho \leq w \\
    \quad &\Longleftrightarrow \quad 
    \min_{\bm{y} \in \mathcal{K} } \left\{ y_1 + Py_2: \bm{By} \succeq_{\mathcal{K}} \bm{v} \right\} \leq w \\
    \quad &\Longleftrightarrow \quad
    \exists \bm{y} \in \mathbb{R}^2: \bm{By} \succeq_{\mathcal{K}} \bm{v}, \ y_1 + Py_2 \leq w \\
    \quad &\Longleftrightarrow \quad \exists \bm y \in \bbr^2 :
    \left\{ \begin{array}{l}
        y_1+Py_2 \leq w \\
        (y_2-v_3)(\exp(-av_1/(y_2-v_3))-1)/a\leq (y_1 -v_2)\\
        y_2-v_3 \geq 0\\
        y_2-v_4 \geq 0
    \end{array} \right. \\
    \quad &\Longleftrightarrow \quad \exists y \in \bbr :
    \left\{ \begin{array}{l}
        (y-v_3)(\exp(-av_1/(y-v_3))-1)/a + v_2 + Py \leq w \\
        y\geq v_3 \\
        y\geq v_4,
    \end{array} \right.
\end{aligned}
\end{equation*}
where the last equivalence is due to the elimination of $y_1$ and renaming $y_2$ as $y$.
\qed
\end{proof}
\section{Parameters used in Problem~\eqref{opt:teststrict}}
\label{appendix:testrestrictparam}
\begin{align*}
    & \bm{H} = \left[ \begin{array}{cccccccccc} 
        0.2220,  &  0.6117,  &  0.0807,  &  0.2741,  &  0.5999,  &  0.1442,  &  0.0243,  &  0.5777,  &  0.2591 \\
        0.8707,  &  0.7659,  &  0.7384,  &  0.4142,  &  0.2658,  &  0.1656,  &  0.2046,  &  0.0016,  &  0.8025 \\
        0.2067,  &  0.5184,  &  0.4413,  &  0.2961,  &  0.2847,  &  0.9639,  &  0.6998,  &  0.5155,  &   0.8705 \\
        0.9186,  &  0.2968,  &  0.1583,  &  0.6288,  &  0.2536,  &  0.9602,  &  0.7795,  &  0.6398,  &  0.9227 \\
        0.4884,  &  0.1877,  &  0.8799,  &  0.5798,  &  0.3276,  &  0.1884,  &  0.0229,  &  0.9856,  &  0.0022 \\
        -1 & 0 & 0 & 0 & 0 & 0 & 0 & 0 & 0 \\
        0 & -1 & 0 & 0 & 0 & 0 & 0 & 0 & 0 \\
        0 & 0 & -1 & 0 & 0 & 0 & 0 & 0 & 0 \\
        0 & 0 & 0 & -1 & 0 & 0 & 0 & 0 & 0 \\
        0 & 0 & 0 & 0 & -1 & 0 & 0 & 0 & 0 \\
        0 & 0 & 0 & 0 & 0 & -1 & 0 & 0 & 0 \\
        0 & 0 & 0 & 0 & 0 & 0 & -1 & 0 & 0 \\
        0 & 0 & 0 & 0 & 0 & 0 & 0 & -1 & 0 \\
        0 & 0 & 0 & 0 & 0 & 0 & 0 & 0 & -1 \\
    \end{array} \right], 
    \bm{h} = \left[ \begin{array}{c} 0.4695 \\ 0.9815 \\ 0.3989 \\ 0.8137 \\ 0.5465 \\ 100 \\ 100 \\ 100 \\ 100 \\ 100 \\ 100 \\ 100 \\ 100 \\ 100 \end{array} \right], \\
    &
    \bm{F} = \left[ \begin{array}{cccccccccc} 
        0.8248,  &  0.5464,  &  0.3655,  &  0.6389,  &  0.9435,  &  0.1008,  &  0.3715,  &  0.4783,  &  0.8005 \\
        0.0942,  &  0.7961,  &  0.2443,  &  0.4934,  &  0.1117,  &  0.3834,  &  0.0124,  &  0.8500,  &  0.0204 \\ 
        0.3610,  &  0.0511,  &  0.7951,  &  0.5835,  &  0.8436,  &  0.5104,  &  0.8597,  &  0.5147,  &  0.5726 \\ 
        0.0355,  &  0.1887,  &  0.3521,  &  0.9393,  &  0.3460,  &  0.9611,  &  0.1111,  &  0.4466,  &  0.4114 \\
    \end{array} \right], \\
    &\bm{d} = \left[ \begin{array}{c} 0.8167 \\ 0.5661 \end{array} \right]^\top, \quad
    \bm{f} = \left[ \begin{array}{c} 0.6354 \\ 0.8119 \\ 0.9267 \\ 0.9126 \end{array} \right], \quad
    \bm{B} = \left[ \begin{array}{cc} 
        0.7709,  &  0.1115 \\
        0.4849,  &  0.2512 \\ 
        0.0291,  &  0.9649 \\
        0.0865,  &  0.6318
    \end{array}\right].
\end{align*}

\section{Additional study on the adaptive network lot-sizing problem}\label{subsec:Lot-sizing}
Next to demonstrate the handling of two-stage linear optimization under uncertainty using a satisficing approach, we consider a network lot-sizing example similar to \cite{Bertsimas2016} and \cite{trevor2018dual_twostage}. Suppose that there are $n$ nodes, each of which faces a random demand $z_i, \; i \in [n]$. Throughout, the support set of $\bm{z}$ is assumed to be a hyperrectangle, \emph{i.e.}, $\mathcal{Z} = \left\{ \bm{z} \in \bbr^{n_z}_+ \mid \bm{z} \leq \bar{\bm{z}} \right\}$. The initial stock $x_i \geq 0$ at each node is to be determined prior to the realization of the random demands. Similarly, we impose that $\mathcal{X} = \left\{ \bm{x} \in \bbr^{n_x}_+ \mid \bm{x} \leq \bar{\bm{x}} \right\}$. After observing the demand, we can transport stock $y_{ij} \geq 0$ from node $i$ to node $j$. To ensure that the demands can always be fulfilled, we allow for an emergency order $w_i \geq 0$ to be made at each node. Initial and emergency orders are purchased at the unit costs $c_i \geq 0$ and $\ell_i \geq c_i$, respectively, while the unit transportation costs are denoted by $t_{ij} \geq 0$. It is assumed that $t_{ij}$ is equal to the distance between the two nodes, and hence $t_{ij} = t_{ji}$. 

First, we present the robust variant of the lot-sizing problem 
\begin{equation}
\label{opt:lotsizing_robust_primal}
\begin{aligned}
	Z_r =\; &\text{minimize} && \bm{c}^\top \bm{x} + x_0 \\
	&\text{subject to} && \bm{x} + \bm{Y}(\bm{z})^\top \bm{1} - \bm{Y}(\bm{z}) \bm{1} + \bm{w}(\bm{z}) \geq \bm{z} && \forall \bm{z} \in \mathcal{U}_r \\
	& && \left\langle \bm{T}, \bm{Y}(\bm{z}) \right\rangle + \bm{\ell}^\top \bm{w}(\bm{z}) \leq x_0 && \forall \bm{z} \in \mathcal{U}_r \\
	& && \bm{Y}(\bm{z}) \geq \bm{0}, \; \bm{w}(\bm{z}) \geq \bm{0} && \forall \bm{z} \in \mathcal{U}_r \\ 
	& && \bm{x} \in \mathcal{X}, \; x_0 \in \mathbb{R}, \; \bm{Y} \in \mathcal{R}^{n, n\times n}, \; \bm{w} \in \mathcal{R}^{n,n},
\end{aligned}
\end{equation}
where $\mathcal{U}_r = \{ \bm{z} \in \mathbb{R}^n_+ \mid \bm{z} \leq \bar{\bm{z}}, \; \bm{1}^\top \bm{z} \leq r \}$ denotes the uncertainty set and the epigraph variable $x_0$ captures the worst-case transportation and emergency purchase costs. 
Similar to the proof of Theorem~\ref{thm:dual_satisficing}, we can dualize Problem~\eqref{opt:lotsizing_robust_primal} twice (the first time over the second-stage decisions $\bm{Y},\bm{w}$ and the second time over the uncertain demands $\bm{z}$) to obtain an alternative formulation.
\begin{proposition}
\label{prop:lotsizing_robust_dual}
Problem~\eqref{opt:lotsizing_robust_primal} is equivalent to
\begin{equation}
\label{opt:lotsizing_robust_dual}
\begin{aligned}
	Z_r =\; &\rm{minimize} && \bm{c}^\top \bm{x} + x_0 \\
	&\rm{subject to} 	&& \bm\beta(\bm\rho)^\top \bar{\bm{z}} + \hat\beta (\bm\rho) r \leq \bm\rho^\top \bm{x} + x_0 &&\hspace{-5mm} \forall \bm{\rho} \in \mathcal{P} \\
	&&& \bm\beta(\bm\rho) + \hat\beta(\bm\rho) \bm{1} \geq \bm\rho && \hspace{-5mm}\forall \bm{\rho} \in \mathcal{P} \\
	& && \bm\beta(\bm\rho) \geq \bm{0}, \; \hat\beta(\bm\rho) \geq 0 &&\hspace{-5mm} \forall \bm\rho \in \mathcal{P} \\ 
	& && \bm{x} \in \mathcal{X}, \; x_0 \in \mathbb{R}, \; \bm\beta \in \mathcal{R}^{n,n}, \; \hat\beta \in \mathcal{R}^{n,1},
\end{aligned}
\end{equation}
where the dual uncertainty set $\mathcal{P}$ is given by $\{ \bm\rho \in \mathbb{R}^n_+: \bm\rho \leq \bm\ell, \; \bm\rho\bm{1}^\top - \bm{1}\bm\rho^\top \leq \bm{T} \}$. 
\end{proposition}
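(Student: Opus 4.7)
{Proof proposal for Proposition~\ref{prop:lotsizing_robust_dual}.}
The plan is to follow a classical dualize--twice argument in the spirit of \cite{Bertsimas2016} and of the proof of Theorem~\ref{thm:dual_satisficing}. First, I would observe that the three robust constraints of Problem~\eqref{opt:lotsizing_robust_primal} can be rewritten as the single semi-infinite requirement
\begin{equation*}
\max_{\bm{z}\in\mathcal{U}_r}\;\min_{\bm{Y},\bm{w}\geq \bm{0}}\;\left\{\langle \bm{T},\bm{Y}\rangle+\bm{\ell}^\top\bm{w}\;\big|\; \bm{Y}^\top\bm{1}-\bm{Y}\bm{1}+\bm{w}\geq \bm{z}-\bm{x}\right\}\;\leq\;x_0,
\end{equation*}
since once $\bm{z}$ is revealed the second--stage recourse is chosen independently for that scenario. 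Because the inner minimization is a linear program that is always feasible (take $\bm{Y}=\bm{0}$ and $\bm{w}=(\bm{z}-\bm{x})^+$) and bounded below by zero, LP strong duality applies. Dualizing over $(\bm{Y},\bm{w})$ with multipliers $\bm{\rho}\geq \bm{0}$ attached to the demand-satisfaction constraints yields
\begin{equation*}
\max_{\bm{z}\in\mathcal{U}_r}\;\max_{\bm{\rho}\in\mathcal{P}}\;(\bm{z}-\bm{x})^\top\bm{\rho}\;\leq\;x_0,
\end{equation*}
where the constraints defining $\mathcal{P}$ come from the dual inequalities for each $w_i$ (giving $\rho_i\leq \ell_i$) and for each $Y_{ij}$ (giving $\rho_j-\rho_i\leq T_{ij}$); using the symmetry $T_{ij}=T_{ji}$ these latter inequalities can equivalently be written as $\bm{\rho}\bm{1}^\top-\bm{1}\bm{\rho}^\top\leq \bm{T}$, which is precisely the description of $\mathcal{P}$ in the proposition.

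Next I would interchange the two max operators (trivially legal since both are over fixed, nonempty sets) and rewrite the requirement as
\begin{equation*}
\forall\bm{\rho}\in\mathcal{P}:\quad \max_{\bm{z}\in\mathcal{U}_r}\bm{\rho}^\top\bm{z}\;\leq\; \bm{\rho}^\top\bm{x}+x_0.
\end{equation*}
The inner LP $\max_{\bm{z}\in\mathcal{U}_r}\bm{\rho}^\top\bm{z}$ is over a nonempty, bounded polyhedron, so strong duality again applies. Dualizing with multipliers $\bm{\beta}\geq\bm{0}$ for $\bm{z}\leq\bar{\bm{z}}$ and $\hat{\beta}\geq 0$ for $\bm{1}^\top\bm{z}\leq r$ gives $\min\{\bm{\beta}^\top\bar{\bm{z}}+\hat{\beta}r\mid \bm{\beta}+\hat{\beta}\bm{1}\geq\bm{\rho},\;\bm{\beta},\hat{\beta}\geq\bm{0}\}$.

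Finally, since for each $\bm{\rho}\in\mathcal{P}$ we only need the \emph{existence} of a feasible $(\bm{\beta},\hat{\beta})$ achieving an objective below $\bm{\rho}^\top\bm{x}+x_0$, we may replace the min by a $\forall\bm{\rho}\,\exists(\bm{\beta},\hat{\beta})$ statement and, by the axiom of choice / epigraph reformulation, encode the witness as arbitrary functions $\bm{\beta}\in\mathcal{R}^{n,n}$ and $\hat{\beta}\in\mathcal{R}^{n,1}$ of the uncertain $\bm{\rho}$. This produces exactly the three robust constraints of Problem~\eqref{opt:lotsizing_robust_dual}, and the equivalence of the two formulations follows. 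The only nontrivial bookkeeping step is verifying that the constraint $\bm{\rho}\bm{1}^\top-\bm{1}\bm{\rho}^\top\leq\bm{T}$ matches the LP dual of the transportation block after accounting for the symmetry of $\bm{T}$; the rest of the argument is a mechanical, two-stage application of LP duality whose validity is underwritten by the feasibility/boundedness observations above.
\qed
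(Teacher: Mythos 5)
Your proposal is correct and follows essentially the same route as the paper: dualize the inner linear program over $(\bm{Y},\bm{w})$ to obtain $\max_{\bm{z}\in\mathcal{U}_r}\max_{\bm\rho\in\mathcal{P}}\bm\rho^\top(\bm{z}-\bm{x})\leq x_0$, then dualize the maximization over $\bm{z}$ to introduce $(\bm\beta,\hat\beta)$ as dual recourse against the uncertainty $\bm\rho$. Your explicit check that the transportation-block dual constraints $\rho_j-\rho_i\leq T_{ij}$ match the stated form $\bm\rho\bm{1}^\top-\bm{1}\bm\rho^\top\leq\bm{T}$ via the symmetry of $\bm{T}$ is a detail the paper uses only implicitly, and is worth having spelled out.
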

\begin{proof}{Proof.}
We could express the robust constraints of Problem~\eqref{opt:lotsizing_robust_primal} as  
\begin{equation*}
\begin{aligned}
    &\max_{\bm{z} \in \mathcal{U}_r} \; \min_{\bm{Y}\geq\bm{0},\bm{w}\geq\bm{0}} \left\{ \left\langle \bm{T}, \bm{Y} \right\rangle + \bm\ell^\top \bm{w} \mid 
    \bm{x} + \bm{w} + \bm{Y}^\top\bm{1} - \bm{Y}\bm{1} \geq \bm{z} \right\} \leq x_0 \\
    \Longleftrightarrow\;\; & \max_{\bm{z} \in \mathcal{U}_r} \; \min_{\bm{Y}\geq\bm{0},\bm{w}\geq\bm{0}} \; \max_{\bm\rho \geq \bm{0}} \left\{ \left\langle \bm{T}, \bm{Y} \right\rangle + \bm\ell^\top \bm{w} + \bm\rho^\top \left( 
    \bm{z} - \bm{x} - \bm{w} + \bm{Y}\bm{1} - \bm{Y}^\top\bm{1} \right) \right\} \leq x_0 \\
    \Longleftrightarrow\;\; & \max_{\bm{z} \in \mathcal{U}_r} \;
    \max_{\bm\rho \geq \bm{0}} \; \left\{ \bm\rho^\top (\bm{z}-\bm{x}) +  \;\min_{\bm{Y}\geq\bm{0},\bm{w}\geq\bm{0}} \;  \left\{ \left\langle \bm{Y}, \bm{T} + \bm\rho\bm{1}^\top - \bm{1}\bm\rho^\top  \right\rangle + (\bm\ell - \bm\rho)^\top \bm{w} \right\} \right\} \leq x_0 \\
    \Longleftrightarrow\;\; & \max_{\bm{z} \in \mathcal{U}_r} \;
    \max_{\bm\rho \in \mathcal{P}} \; \left\{ \bm\rho^\top (\bm{z}-\bm{x}) \right\} \leq x_0 \\
    \Longleftrightarrow\;\; & 
    \max_{\bm\rho \in \mathcal{P}} \; \min_{\bm\beta \geq 0, \hat\beta \geq 0} \left\{ \bm\beta^\top\bar{\bm{z}} + \hat\beta r - \bm\rho^\top\bm{x} \mid \bm\beta + \hat\beta \bm{1} \geq \bm\rho \right\} \leq x_0.
\end{aligned}    
\end{equation*}
Treating $\bm\rho$ as uncertainty as well as $\bm\beta$ and $\hat\beta$ as dual recourse variables finally completes the proof.~\qed
\end{proof}

For the robust satisficing variant of the problem, we consider
\begin{equation}
\label{opt:lotsizing_satisficing_primal}
\begin{aligned}
	&\text{minimize} && k \\
	&\text{subject to} && \bm{x} + \bm{Y}(\bm{z})^\top \bm{1} - \bm{Y}(\bm{z}) \bm{1} + \bm{w}(\bm{z})\geq \bm{z} && \forall \bm{z} \in \mathcal{Z} \\
	& && \bm{c}^\top\bm{x} + \left\langle \bm{T}, \bm{Y}(\bm{z}) \right\rangle + \bm{\ell}^\top \bm{w}(\bm{z}) \leq \tau + k\Vert \bm{z} \Vert_1 && \forall \bm{z} \in \mathcal{Z} \\
	& && \bm{Y}(\bm{z}) \geq \bm{0}, \; \bm{w}(\bm{z}) \geq \bm{0} && \forall \bm{z} \in \mathcal{Z} \\ 
	& && \bm{x} \in \mathcal{X}, \; k \in \mathbb{R}_+, \; \bm{Y} \in \mathcal{R}^{n, n\times n}, \; \bm{w} \in \mathcal{R}^{n,n},
\end{aligned}
\end{equation}
where $\tau \geq Z_0=0$ is the prescribed target objective. The dualized formulation~\eqref{opt:nonneg_orthantcone_satisficing} corresponding to this problem is explicitly given below.
\begin{proposition}
\label{prop:lotsizing_satisficing_dual}
Problem~\eqref{opt:lotsizing_satisficing_primal} is equivalent to
\begin{equation}
\label{opt:lotsizing_satisficing_dual}
\begin{aligned}
    &\rm{minimize} && k \\
    &\rm{subject to} && (\bm{c}-\bm\rho)^\top\bm{x} + \bm\beta(\bm\rho)^\top \bar{\bm{z}}  \leq \tau && \forall \bm\rho \in \mathcal{P} \\
    & && \bm\beta(\bm\rho) + k\bm{1}\geq \bm\rho  && \forall \bm\rho \in \mathcal{P} \\
    & && \bm\beta(\bm\rho) \geq \bm{0} && \forall \bm\rho \in \mathcal{P} \\
    & && \bm{x} \in \mathcal{X}, \; k \in \mathbb{R}_+, \; \bm\beta \in \mathcal{R}^{n,n},
\end{aligned}
\end{equation}
where the dual uncertainty set $\mathcal{P}$ is given by $\{ \bm\rho \in \mathbb{R}^n_+: \bm\rho \leq \bm\ell, \; \bm\rho\bm{1}^\top - \bm{1}\bm\rho^\top \leq \bm{T} \}$.
\end{proposition}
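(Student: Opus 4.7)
The plan is to mirror the proof of Proposition~\ref{prop:lotsizing_robust_dual} by dualizing twice, first eliminating the second-stage recourse $(\bm{Y},\bm{w})$ and then the uncertain demand $\bm{z}$. The only substantive difference from the robust case is the presence of the penalty term $k\|\bm{z}\|_1$, which, because $\mathcal{Z}\subseteq\mathbb{R}^n_+$, simplifies to $k\bm{1}^\top\bm{z}$. Hence the cost constraint in~\eqref{opt:lotsizing_satisficing_primal} becomes a robust linear inequality in $\bm{z}$, and we can recycle the machinery of Proposition~\ref{prop:lotsizing_robust_dual} almost verbatim.

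The first step is to observe that, as in the robust proof, for each realization of $\bm{z}$ it is optimal to choose the $(\bm{Y}(\bm{z}),\bm{w}(\bm{z}))$ that solves
\begin{equation*}
\min_{\bm{Y},\bm{w}\geq\bm{0}}\left\{\langle\bm{T},\bm{Y}\rangle+\bm{\ell}^\top\bm{w}\ \big|\ \bm{x}+\bm{Y}^\top\bm{1}-\bm{Y}\bm{1}+\bm{w}\geq\bm{z}\right\}.
\end{equation*}
This inner LP is strictly feasible ($\bm{Y}=\bm{0}$, $\bm{w}=\bm{z}$) and has non-negative objective, so by strong LP duality it equals $\max_{\bm\rho\in\mathcal{P}}\bm\rho^\top(\bm{z}-\bm{x})$, with exactly the same $\mathcal{P}=\{\bm\rho\geq\bm{0}:\bm\rho\leq\bm\ell,\;\bm\rho\bm{1}^\top-\bm{1}\bm\rho^\top\leq\bm{T}\}$ derived in the proof of Proposition~\ref{prop:lotsizing_robust_dual}. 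Substituting and rearranging converts~\eqref{opt:lotsizing_satisficing_primal} into the single robust constraint
\begin{equation*}
\max_{\bm{z}\in\mathcal{Z}}\max_{\bm\rho\in\mathcal{P}}\left\{(\bm{c}-\bm\rho)^\top\bm{x}+(\bm\rho-k\bm{1})^\top\bm{z}\right\}\leq\tau.
\end{equation*}

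The second step is to swap the two outer maxima (both are genuine maxima over closed sets, and $\mathcal{P}$ is bounded) and to dualize the inner maximum over $\bm{z}\in\mathcal{Z}=\{\bm{z}\geq\bm{0}:\bm{z}\leq\bar{\bm{z}}\}$. Since $\mathcal{Z}$ is a box, strong LP duality yields
\begin{equation*}
\max_{\bm{z}\in\mathcal{Z}}(\bm\rho-k\bm{1})^\top\bm{z}=\min_{\bm\beta\geq\bm{0},\;\bm\beta+k\bm{1}\geq\bm\rho}\bm\beta^\top\bar{\bm{z}}.
\end{equation*}
Plugging this back in, the robust constraint becomes: for every $\bm\rho\in\mathcal{P}$ there exists a vector $\bm\beta(\bm\rho)\geq\bm{0}$ with $\bm\beta(\bm\rho)+k\bm{1}\geq\bm\rho$ and $(\bm{c}-\bm\rho)^\top\bm{x}+\bm\beta(\bm\rho)^\top\bar{\bm{z}}\leq\tau$. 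Promoting $\bm\beta$ to a function $\bm\beta\in\mathcal{R}^{n,n}$ of $\bm\rho$ then delivers precisely formulation~\eqref{opt:lotsizing_satisficing_dual}.

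The main obstacle is justifying the two strong-duality interchanges and the min-max swap rigorously. Strong LP duality for the first inner problem needs boundedness, which follows because complete recourse via $\bm{w}$ guarantees feasibility and the non-negativity of $\bm{T}$ and $\bm\ell$ guarantees a finite optimum; for the second inner problem the box $\mathcal{Z}$ is compact so there is nothing to check. The max–max swap is trivial as soon as the iterated suprema are finite, which follows from boundedness of $\mathcal{P}$ (inherited from Proposition~\ref{prop:lotsizing_robust_dual}) and compactness of $\mathcal{Z}$. Beyond these standard verifications, the proof is essentially a bookkeeping exercise, so the proposition should follow by an almost verbatim adaptation of the earlier derivation.
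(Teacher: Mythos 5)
Your proposal is correct and follows exactly the route the paper intends: the paper omits this proof with the remark that it ``widely parallels'' that of Proposition~\ref{prop:lotsizing_robust_dual}, and your argument is precisely that parallel—dualize the second-stage LP over $(\bm{Y},\bm{w})$ to obtain the inner maximization over $\bm\rho\in\mathcal{P}$, use $\Vert\bm{z}\Vert_1=\bm{1}^\top\bm{z}$ on $\mathcal{Z}\subseteq\mathbb{R}^n_+$, and then dualize the box-constrained maximization over $\bm{z}$ to introduce the dual recourse $\bm\beta(\bm\rho)$. The duality and interchange justifications you supply are the standard ones and are adequate.
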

\begin{proof}{Proof.}
The proof widely parallels to that of~Proposition~\ref{prop:lotsizing_robust_dual} and is thus omitted. \qed
\end{proof}

Due to the linearity of the lot-sizing objective and constraint functions, we can solve Problem~\eqref{opt:lotsizing_robust_primal} approximately using an affine adaptation on either the primal or the dual recourse. These approximations turn out to be equivalent; see Proposition~\ref{prop:lotsizing_robust} below and also \cite{Bertsimas2016} who first establish a similar equivalence for a variant of the lot-sizing problem without emergency orders. For the robust satisficing problem~\eqref{opt:lotsizing_satisficing_primal}, \cite{long2022robust} explore how it could be solved using a primal affine adaptation, whereas here we demonstrate how the same problem could be solved using a dual affine adaptation. Perhaps unsurprisingly, we leverage Theorems~1 and~2 of \cite{Bertsimas2016} to establish in Proposition~\ref{prop:lotsizing_satisficing} that these two approximations are also equivalent. Though as earlier shown, we emphasize that for a broader class of two-stage robust satisficing linear optimization problems with complete recourse, the dual affine adaptation can provide a \emph{strictly} less conservative approximation compared to the primal affine adaptation.

\begin{proposition}
\label{prop:lotsizing_robust}
Problems~\eqref{opt:lotsizing_robust_primal} and~\eqref{opt:lotsizing_robust_dual} attain the same objective value when solved approximately using affine recourse adaptation. 
\end{proposition}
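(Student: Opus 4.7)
The plan is to prove Proposition~\ref{prop:lotsizing_robust} by invoking the general equivalence between primal and dual affine recourse adaptations for two-stage robust linear optimization problems established in Theorems~1 and~2 of \cite{Bertsimas2016}, and then verifying that their hypotheses apply to the lot-sizing instance at hand. More concretely, I will first express Problem~\eqref{opt:lotsizing_robust_primal} in the canonical two-stage robust LP form
\[
\min_{\bm{x}\in\cx,\,x_0\in\bbr}\ \bm{c}^\top\bm{x} + x_0 \quad \text{s.t.}\ \max_{\bm{z}\in\mathcal{U}_r}\min_{(\bm Y,\bm w)\ge \bm 0}\left\{\langle\bm T,\bm Y\rangle+\bm\ell^\top\bm w \,\middle|\, \bm x + \bm Y^\top\bm 1 -\bm Y\bm 1 +\bm w \ge \bm z\right\}\le x_0,
\]
where the uncertainty $\bm{z}$ and the recourse $(\bm{Y},\bm{w})$ enter the second-stage constraints linearly and the recourse matrix (namely $[\bm{I}\ \bm{1}\bm{1}^\top-\bm{I}]$ acting on $\mathrm{vec}(\bm Y)$ and $\bm w$) is independent of $\bm{z}$, i.e., the problem has \emph{fixed recourse}.

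The dual formulation derived in Proposition~\ref{prop:lotsizing_robust_dual} is exactly the ``dualized'' two-stage robust LP in the sense of \cite{Bertsimas2016}: its dual uncertainty $\bm\rho$ and dual recourse $(\bm\beta,\hat\beta)$ arise from applying LP duality to the innermost minimization over $(\bm{Y},\bm{w})$ and then swapping the max over $\bm{z}$ with the max over $\bm\rho$. Restricting, in~\eqref{opt:lotsizing_robust_primal}, the primal recourse $(\bm Y(\bm z),\bm w(\bm z))$ to be affine in $\bm z$ and, in~\eqref{opt:lotsizing_robust_dual}, the dual recourse $(\bm\beta(\bm\rho),\hat\beta(\bm\rho))$ to be affine in $\bm\rho$ yields two tractable safe approximations whose robust counterparts can be written out explicitly via standard LP duality over $\mathcal{U}_r$ and $\mathcal{P}$, respectively. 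The plan is to then apply Theorems~1 and~2 of \cite{Bertsimas2016} verbatim: since our problem has (i) a polyhedral uncertainty set $\mathcal{U}_r$, (ii) fixed recourse, and (iii) an objective and constraint system that is jointly linear in $(\bm{x},\bm{z},\bm{Y},\bm{w})$, their result guarantees that the optimal objective values produced by the primal-affine and dual-affine adaptations coincide.

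To make the argument self-contained, I will also sketch a direct certificate: given any feasible solution $(\bm{x},x_0,\bm Y^0,\{\bm Y^i\}_{i\in[n]},\bm w^0,\bm W)$ of the primal-affine adaptation, I will construct an explicit dual-affine solution $(\bm{x},x_0,\bm\beta^0,\bm B,\hat\beta^0,\bm{\hat b})$ achieving the same objective, and vice versa, by matching the coefficients obtained after eliminating $\bm{z}$ (resp.\ $\bm\rho$) via LP duality against $\mathcal{U}_r$ (resp.\ $\mathcal{P}$). The correspondence amounts to identifying the dual multipliers associated with the constraints $\bm z\le\bar{\bm z}$, $\bm 1^\top\bm z\le r$ in the primal side with the intercept-slope coefficients of $(\bm\beta,\hat\beta)$ on the dual side, and symmetrically for the dual uncertainty constraints $\bm\rho\le\bm\ell,\ \bm\rho\bm 1^\top-\bm 1\bm\rho^\top\le \bm T$.

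The main obstacle I anticipate is the bookkeeping associated with the emergency-order variable $\bm w$, which slightly extends the setting of~\cite{Bertsimas2016} who work without such an auxiliary recourse, and with the matrix-valued recourse $\bm Y$ whose affine expansion involves $n$ coefficient matrices. To handle this cleanly I will stack $\mathrm{vec}(\bm Y)$ and $\bm w$ into a single recourse vector so that the problem fits the canonical template, carry out the two LP dualizations over $\mathcal{U}_r$ and $\mathcal{P}$, and then match resulting constraints term-by-term. Once this identification is made, strong LP duality (applicable because $\mathcal{U}_r$ and $\mathcal{P}$ are nonempty bounded polyhedra) yields the claimed equality of optimal objectives.
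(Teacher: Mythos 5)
There is a genuine gap in your primary route. Theorems~1 and~2 of \cite{Bertsimas2016} do not apply ``verbatim'' to give the equivalence between the affine adaptation of~\eqref{opt:lotsizing_robust_primal} and that of~\eqref{opt:lotsizing_robust_dual}, because~\eqref{opt:lotsizing_robust_dual} is \emph{not} the dualized formulation their theorems produce. Their Theorem~1 yields a reformulation whose dual uncertainty set is the homogenized polytope $\hat{\mathcal{P}} = \{ (\bm\rho,\hat\rho) \geq \bm 0 \mid \bm\rho \leq \hat\rho\bm\ell,\ \bm\rho\bm 1^\top - \bm 1\bm\rho^\top \leq \hat\rho\bm T,\ \bm 1^\top\bm\rho + \hat\rho = 1\}$, carrying an extra multiplier $\hat\rho$ for the epigraph constraint and a normalization; by contrast, the dualization in Proposition~\ref{prop:lotsizing_robust_dual} (inner LP dual first, then a max--max swap, then dualizing over $\bm z$) lands on the dehomogenized set $\mathcal{P} = \{\bm\rho \geq \bm 0 \mid \bm\rho \leq \bm\ell,\ \bm\rho\bm 1^\top - \bm 1\bm\rho^\top \leq \bm T\}$. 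An affine decision rule in $(\bm\rho,\hat\rho)$ over $\hat{\mathcal{P}}$ and an affine decision rule in $\bm\rho$ over $\mathcal{P}$ are \emph{not} interchangeable by mere relabelling: the two sets are related by the projective scaling $\bm\rho \mapsto \bm\rho/(1-\bm 1^\top\bm\rho)$ (and its inverse $\bm\rho \mapsto \bm\rho/(1+\bm 1^\top\bm\rho)$), under which affinity is preserved only after the explicit reparametrization of the slopes by the intercepts, $\bm\beta^{\rm s} \mapsto \bm\beta^{\rm s} \mp \bm\beta^{\rm i}\bm 1^\top$ and $\hat{\bm\beta}^{\rm s} \mapsto \hat{\bm\beta}^{\rm s} \mp \hat\beta^{\rm i}\bm 1^\top$. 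Exhibiting this two-way map and verifying feasibility of the transformed coefficients is the actual content of the paper's proof (the passage between~\eqref{opt:lotsizing_robust_dual_ldr} and~\eqref{opt:lotsizing_robust_Frans_ldr}); your proposal never identifies this normalization discrepancy and instead locates the difficulty in the bookkeeping for $\bm w$ and the matrix-valued $\bm Y$, which is incidental.

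Your fallback ``direct certificate'' is closer in spirit to what is needed, but as stated it only redoes the primal-affine versus $\hat{\mathcal{P}}$-affine equivalence that \cite{Bertsimas2016} already supply; it still leaves open the step from the $\hat{\mathcal{P}}$-affine adaptation to the $\mathcal{P}$-affine adaptation of~\eqref{opt:lotsizing_robust_dual}. To repair the argument, keep the appeal to Theorems~1 and~2 of \cite{Bertsimas2016} for the first leg (primal affine $\equiv$ affine over $\hat{\mathcal{P}}$), then add the missing second leg: for $\bm\rho \in \hat{\mathcal{P}}$ one has $\bm\rho/(1-\bm 1^\top\bm\rho) \in \mathcal{P}$ and conversely, so a feasible $(\bm\beta^{\rm i},\bm\beta^{\rm s},\hat\beta^{\rm i},\hat{\bm\beta}^{\rm s})$ for one LDR problem yields, after the slope shift above, a feasible solution for the other with the same $(\bm x, x_0)$ and hence the same objective value.
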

\begin{proof}{Proof.}
In their Theorem~1, \cite{Bertsimas2016} show an equivalent reformulation of Problem~\eqref{opt:lotsizing_robust_primal} which is
\begin{equation}
\label{opt:lotsizing_robust_Frans}
\begin{aligned}
	&\text{minimize} && \bm{c}^\top \bm{x} + x_0 \\
	&\text{subject to}  && \bm\beta(\bm\rho,\hat\rho)^\top \bar{\bm{z}} + \hat\beta (\bm\rho,\hat\rho) r \leq \bm\rho^\top \bm{x} + \hat\rho x_0 && \forall (\bm{\rho},\hat\rho) \in \hat{\mathcal{P}} \\
	&&& \bm\beta(\bm\rho,\hat\rho) + \hat\beta(\bm\rho,\hat\rho) \bm{1} \geq \bm\rho && \forall (\bm{\rho},\hat\rho) \in \hat{\mathcal{P}} \\
	& && \bm\beta(\bm\rho,\hat\rho) \geq \bm{0}, \; \hat\beta(\bm\rho,\hat\rho) \geq 0 && \forall (\bm{\rho},\hat\rho) \in \hat{\mathcal{P}} \\
	& && \bm{x} \in \mathcal{X}, \; x_0 \in \mathbb{R}, \; \bm\beta \in \mathcal{R}^{n+1,n}, \; \hat\beta \in \mathcal{R}^{n+1,1},
\end{aligned}
\end{equation}
where $\hat{\mathcal{P}} = \{ (\bm\rho,\hat{\rho}) \in \mathbb{R}^n_+ \times \mathbb{R}_+ \mid \bm\rho \leq \hat\rho\bm{\ell}, \; \bm\rho\bm{1}^\top - \bm{1}\bm\rho^\top \leq \hat\rho \bm{T}, \; \bm{1}^\top\bm\rho + \hat\rho = 1 \}$. They also argue in their Theorem~2 that the respective affine recourse approximations of Problems~\eqref{opt:lotsizing_robust_primal} and~\eqref{opt:lotsizing_robust_Frans} are equivalent. As a result, it suffices to show that the affine recourse approximations of Problems~\eqref{opt:lotsizing_robust_dual} and~\eqref{opt:lotsizing_robust_Frans} are equivalent. 

First, we write down the affine recourse approximation of Problem~\eqref{opt:lotsizing_robust_dual} by restricting 
$\bm\beta(\rho)$ and $\hat\beta(\rho)$ to $\bm\beta^{\rm{i}} + \bm\beta^{\rm{s}}\bm\rho$ and $\hat\beta^{\rm{i}} + \hat{\bm{\beta}}^{\rm{s}}\bm\rho$ (where `i' and `s' indicate the intercept and slope of the affine decision rules), respectively, and obtain
\begin{equation}
\label{opt:lotsizing_robust_dual_ldr}
\begin{aligned}
	&\text{minimize} && \bm{c}^\top \bm{x} + x_0 \\
	&\text{subject to}  && \bar{\bm{z}}^\top \bm\beta^{\rm{i}} + r \hat\beta^{\rm{i}} + \big( \bar{\bm{z}}^\top \bm\beta^{\rm{s}} + r\hat{\bm\beta}^{\rm{s}} \big)\bm\rho \leq \bm\rho^\top \bm{x} + x_0 && \forall \bm{\rho} \in \mathcal{P} \\
	&&& \bm\beta^{\rm{i}} + \bm{1}\hat{\beta}^{\rm{i}}  + \big( \bm\beta^{\rm{s}} + \bm{1} \hat{\bm\beta}^{\rm{s}}  \big)\bm\rho \geq \bm\rho && \forall \bm{\rho} \in \mathcal{P} \\
	& && \bm\beta^{\rm{i}} + \bm\beta^{\rm{s}}\bm\rho \geq \bm{0}, \quad \hat\beta^{\rm{i}} + \hat{\bm{\beta}}^{\rm{s}}\bm\rho \geq 0 && \forall \bm\rho \in \mathcal{P} \\ 
	& && \bm{x} \in \mathcal{X}, \; x_0 \in \mathbb{R}, \; \bm\beta^{\rm{i}} \in \mathbb{R}^n, \; \bm\beta^{\rm{s}} \in \mathbb{R}^{n \times n}, \; \hat\beta^{\rm{i}} \in \mathbb{R}, \; \hat{\bm\beta}^{\rm{s}} \in \mathbb{R}^{1 \times n}.
\end{aligned}
\end{equation}
Next, for Problem~\eqref{opt:lotsizing_robust_Frans}, we first observe that the uncertainty set $\hat{\mathcal{P}}$ requires $\hat{\rho}$ to be linearly dependent on $\bm\rho$. Hence, we can simply ignore the additional uncertain parameter $\hat{\rho}$ and work with the projection of $\hat{\mathcal{P}}$ on $\bm\rho$. By a slight abuse of notation, we will denote this projection by $\hat{\mathcal{P}}$ and note that
\begin{equation*}
	\hat{\mathcal{P}} = \left\{ \bm\rho \in \mathbb{R}^n_+ \mid \bm\rho \leq \big( 1 - \bm{1}^\top \bm\rho \big)\bm{\ell}, \; \bm\rho\bm{1}^\top - \bm{1}\bm\rho^\top \leq \big( 1 - \bm{1}^\top \bm\rho \big) \bm{T} \right\}. 
\end{equation*}
Note that as $\hat{\mathcal{P}} \subset \mathbb{R}^n_+$ and as $\bm\ell \geq \bm{0}$, it is a necessity that $\bm{1}^\top\bm\rho < 1$ for all $\bm\rho \in \hat{\mathcal{P}}$. We are now ready to present the explicit affine recourse approximation of Problem~\eqref{opt:lotsizing_robust_Frans}, which is
\begin{equation}
\label{opt:lotsizing_robust_Frans_ldr}
\begin{aligned}
	&\text{minimize} && \bm{c}^\top \bm{x} + x_0 \\
	&\text{subject to}  && \bar{\bm{z}}^\top \bm\beta^{\rm{i}} + r \hat\beta^{\rm{i}} + \big( \bar{\bm{z}}^\top \bm\beta^{\rm{s}} + r\hat{\bm\beta}^{\rm{s}} \big)\bm\rho \leq \bm\rho^\top \bm{x} + \big( 1 - \bm{1}^\top\bm\rho \big)x_0 && \forall \bm{\rho} \in \hat{\mathcal{P}} \\
	&&& \bm\beta^{\rm{i}} + \bm{1}\hat{\beta}^{\rm{i}}  + \big( \bm\beta^{\rm{s}} + \bm{1} \hat{\bm\beta}^{\rm{s}}  \big)\bm\rho \geq \bm\rho && \forall \bm{\rho} \in \hat{\mathcal{P}} \\
	& && \bm\beta^{\rm{i}} + \bm\beta^{\rm{s}}\bm\rho \geq \bm{0}, \quad \hat\beta^{\rm{i}} + \hat{\bm{\beta}}^{\rm{s}}\bm\rho \geq 0 && \forall \bm\rho \in \hat{\mathcal{P}} \\
	& && \bm{x} \in \mathcal{X}, \; x_0 \in \mathbb{R}, \; \bm\beta^{\rm{i}} \in \mathbb{R}^n, \; \bm\beta^{\rm{s}} \in \mathbb{R}^{n \times n}, \; \hat\beta^{\rm{i}} \in \mathbb{R}, \; \hat{\bm\beta}^{\rm{s}} \in \mathbb{R}^{1 \times n}.
\end{aligned}
\end{equation}

It remains to show that Problems~\eqref{opt:lotsizing_robust_dual_ldr} and~\eqref{opt:lotsizing_robust_Frans_ldr} are equivalent. First, we will show that Problem~\eqref{opt:lotsizing_robust_Frans_ldr} is a relaxation of Problem~\eqref{opt:lotsizing_robust_dual_ldr}. To this end, for any feasible solution $\bm{X} = (\bm{x},x_0,\bm\beta^{\rm{i}},\bm\beta^{\rm{s}},\hat\beta^{\rm{i}}, \hat{\bm\beta}^{\rm{s}})$ of Problem~\eqref{opt:lotsizing_robust_dual_ldr}, we will show that $\bm{X}' = (\bm{x},x_0,\bm\beta^{\rm{i}},\bm\beta^{\rm{s}} - \bm\beta^{\rm{i}}\bm{1}^\top,\hat\beta^{\rm{i}}, \hat{\bm\beta}^{\rm{s}} - \hat\beta^{\rm{i}}\bm{1}^\top)$ is feasible in Problem~\eqref{opt:lotsizing_robust_Frans_ldr}. Note that, as both solutions share the same~$\bm{x}$ and the same $x_0$, they attain the same objective value in their respective problem.

For any $\bm\rho \in \hat{\mathcal{P}}$, it is readily seen that $\bm\rho/(1 - \bm{1}^\top \bm\rho) \in \mathcal{P}$. As a result, the feasibility of~$\bm{X}$ in view of Problem~\eqref{opt:lotsizing_robust_dual_ldr} implies, for all $\bm\rho \in \hat{\mathcal{P}}$, that 
\begin{equation*}
\begin{aligned}
	&\big( 1 - \bm{1}^\top \bm\rho \big) \big( \bar{\bm{z}}^\top \bm\beta^{\rm{i}}  + r \hat\beta^{\rm{i}} \big) + \big( \bar{\bm{z}}^\top \bm\beta^{\rm{s}} + \hat{z}\hat{\bm\beta}^{\rm{s}} \big)\bm\rho \leq \bm\rho^\top \bm{x} + \big( 1 - \bm{1}^\top\bm\rho \big)x_0, \\
	&\big( 1 - \bm{1}^\top \bm\rho \big) \big( \bm\beta^{\rm{i}} + \bm{1}\hat{\beta}^{\rm{i}} \big)  + \big( \bm\beta^{\rm{s}} + \bm{1} \hat{\bm\beta}^{\rm{s}}  \big)\bm\rho \geq \bm\rho, \\
	&\big( 1 - \bm{1}^\top \bm\rho \big) \bm\beta^{\rm{i}} + \bm\beta^{\rm{s}}\bm\rho \geq \bm{0}, \\ 
	&\big( 1 - \bm{1}^\top \bm\rho \big) \hat\beta^{\rm{i}} + \hat{\bm{\beta}}^{\rm{s}}\bm\rho \geq 0 .
\end{aligned} 
\end{equation*}
Rearranging terms in the above four inequalities yields
\begin{equation*}
\begin{aligned}
	&\bar{\bm{z}}^\top \bm\beta^{\rm{i}} + r \hat\beta^{\rm{i}} + \left( \bar{\bm{z}}^\top \big( \bm\beta^{\rm{s}} - \bm\beta^{\rm{i}}\bm{1}^\top \big) + r \big( \hat{\bm\beta}^{\rm{s}} -  \hat{\beta}^{\rm{i}}\bm{1}^\top \big) \right)\bm\rho \leq \bm\rho^\top \bm{x} + \big( 1 - \bm{1}^\top\bm\rho \big)x_0, \\
	&\bm\beta^{\rm{i}} + \bm{1}\hat{\beta}^{\rm{i}}  + \left( \big( \bm\beta^{\rm{s}} - \bm\beta^{\rm{i}}\bm{1}^\top \big)  + \bm{1} \big( \hat{\bm\beta}^{\rm{s}}  - \hat{\beta}^{\rm{i}}\bm{1}^\top \big) \right)\bm\rho \geq \bm\rho, \\
	&\bm\beta^{\rm{i}} + \big( \bm\beta^{\rm{s}} - \bm\beta^{\rm{i}}\bm{1}^\top \big) \bm\rho \geq \bm{0}, \\
	&\hat\beta^{\rm{i}} + \big( \hat{\bm{\beta}}^{\rm{s}} - \hat{\beta}^{\rm{i}}\bm{1}^\top \big)\bm\rho \geq 0, 
\end{aligned} 
\end{equation*}
for all $\bm\rho \in \hat{\mathcal{P}}$, which in turn implies that $\bm{X}'$ is indeed feasible in Problem~\eqref{opt:lotsizing_robust_Frans_ldr}.

Conversely, for any feasible solution $\bm{X} = (\bm{x},x_0,\bm\beta^{\rm{i}},\bm\beta^{\rm{s}},\hat\beta^{\rm{i}}, \hat{\bm\beta}^{\rm{s}})$ of Problem~\eqref{opt:lotsizing_robust_Frans_ldr}, one can similarly show that $\bm{X}' = (\bm{x},x_0,\bm\beta^{\rm{i}},\bm\beta^{\rm{s}} + \bm\beta^{\rm{i}}\bm{1}^\top,\hat\beta^{\rm{i}}, \hat{\bm\beta}^{\rm{s}} + \hat\beta^{\rm{i}}\bm{1}^\top)$ is feasible in Problem~\eqref{opt:lotsizing_robust_dual_ldr} to conclude that Problem~\eqref{opt:lotsizing_robust_dual_ldr} is a relaxation of Problem~\eqref{opt:lotsizing_robust_Frans_ldr}. To see this, observe that $\bm\rho / (1 + \bm{1}^\top \bm\rho) \in \hat{\mathcal{P}}$ for any $\bm\rho \in \mathcal{P}$. As a result, the feasibility of~$\bm{X}$ in view of Problem~\eqref{opt:lotsizing_robust_Frans_ldr} implies, for all $\bm\rho \in \mathcal{P}$, that 
\begin{equation*}
\begin{aligned}
	&\big( 1 + \bm{1}^\top \bm\rho \big) \big( \bar{\bm{z}}^\top \bm\beta^{\rm{i}}  + r \hat\beta^{\rm{i}} \big) + \big( \bar{\bm{z}}^\top \bm\beta^{\rm{s}} + r\hat{\bm\beta}^{\rm{s}} \big)\bm\rho \leq \bm\rho^\top \bm{x} + x_0, \\
	&\big( 1 + \bm{1}^\top \bm\rho \big) \big( \bm\beta^{\rm{i}} + \bm{1}\hat{\beta}^{\rm{i}} \big)  + \big( \bm\beta^{\rm{s}} + \bm{1} \hat{\bm\beta}^{\rm{s}}  \big)\bm\rho \geq \bm\rho, \\
	&\big( 1 + \bm{1}^\top \bm\rho \big) \bm\beta^{\rm{i}} + \bm\beta^{\rm{s}}\bm\rho \geq \bm{0}, \\ 
	&\big( 1 + \bm{1}^\top \bm\rho \big) \hat\beta^{\rm{i}} + \hat{\bm{\beta}}^{\rm{s}}\bm\rho \geq 0 .
\end{aligned} 
\end{equation*}
Rearranging the terms in the above inequalities shows that $\bm{X}'$ is indeed feasible in Problem~\eqref{opt:lotsizing_robust_dual_ldr} as desired. Therefore, the optimal objective value of Problem~\eqref{opt:lotsizing_robust_dual_ldr} constitutes both a lower and an upper bound of that of Problem~\eqref{opt:lotsizing_robust_Frans_ldr}. The proof is hence completed. \qed 
\end{proof}

\begin{proposition}
\label{prop:lotsizing_satisficing}
Problems~\eqref{opt:lotsizing_satisficing_primal} and~\eqref{opt:lotsizing_satisficing_dual} attain the same objective value when solved approximately using affine recourse adaptation.
\end{proposition}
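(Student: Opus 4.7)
{Proof plan.}
The plan is to mirror the strategy used in the proof of Proposition~\ref{prop:lotsizing_robust}, namely to introduce an intermediate ``lifted'' reformulation of~\eqref{opt:lotsizing_satisficing_primal} that plays the role of Problem~\eqref{opt:lotsizing_robust_Frans}, and then to establish a two-sided feasibility correspondence between the affine adaptations of the lifted problem and those of both~\eqref{opt:lotsizing_satisficing_primal} and~\eqref{opt:lotsizing_satisficing_dual}. Concretely, I would first rewrite the second robust constraint of~\eqref{opt:lotsizing_satisficing_primal} by substituting $\tau + k\|\bm z\|_1$ with $\tau + k \bm 1^\top \bm z$ (valid because $\bm z \geq \bm 0$ on $\cz$), and then apply the Bertsimas--de~Ruiter dualization scheme (Theorems~1 and~2 of~\cite{Bertsimas2016}) to obtain a dual-form reformulation whose dual uncertainty set $\hat{\mathcal{P}}$ has a homogenizing variable $\hat\rho \geq 0$ with $\bm 1^\top\bm\rho + \hat\rho = 1$ and with the cost coefficient $k$ appearing linearly on the right-hand side through an extra $k\hat\rho$ term playing the analogous role to $x_0 \hat\rho$ in the robust case.

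Next, I would write out explicitly the affine recourse adaptation of~\eqref{opt:lotsizing_satisficing_dual} by setting $\bm\beta(\bm\rho) = \bm\beta^{\rm i} + \bm\beta^{\rm s}\bm\rho$, and likewise the affine adaptation of the lifted formulation. The two problems are then related by the same affine bijection used in the proof of Proposition~\ref{prop:lotsizing_robust}, namely $\bm\beta^{\rm s} \mapsto \bm\beta^{\rm s} \pm \bm\beta^{\rm i}\bm 1^\top$ (with the sign depending on the direction of inclusion), exploiting that $\bm\rho \in \hat{\mathcal P}$ iff $\bm\rho/(1-\bm 1^\top \bm\rho) \in \mathcal P$ and conversely $\bm\rho \in \mathcal P$ implies $\bm\rho/(1+\bm 1^\top\bm\rho) \in \hat{\mathcal P}$. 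Each robust inequality in one formulation, once multiplied by $(1 \mp \bm 1^\top\bm\rho)$ and rearranged, matches its counterpart in the other formulation. This yields the chain of inclusions that forces equality of optimal values between the dual affine adaptation and the lifted affine adaptation.

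Finally, by Theorem~2 of~\cite{Bertsimas2016} (or a direct primal--dual argument applied to the inner linear program in $(\bm Y, \bm w)$), the lifted affine adaptation coincides with the primal affine adaptation of~\eqref{opt:lotsizing_satisficing_primal}: the affine-in-$\bm z$ primal recourse $\bm Y(\bm z), \bm w(\bm z)$ corresponds, via LP duality, to affine-in-$\bm\rho$ dual recourse in the lifted uncertainty set $\hat{\mathcal P}$. Chaining the two equivalences delivers the equality of optimal values.

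The main obstacle I expect is bookkeeping the extra $k\|\bm z\|_1$ term: unlike the robust case where $x_0$ is a free scalar decision that cleanly pairs with the homogenizing coordinate $\hat\rho$, here $k$ appears both in the objective and on the right-hand side of one of the robust constraints, and its coupling with $\hat\rho$ in the lifted uncertainty set needs to be identified correctly so that the affine bijection on the dual recourse still carries through. Once that coupling is written out explicitly (so that the $k\hat\rho$ contribution lands on the correct side of each inequality), the rest of the argument is a near-verbatim transcription of the proof of Proposition~\ref{prop:lotsizing_robust}. \qed
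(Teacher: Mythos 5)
Your proposal follows essentially the same route as the paper's proof: the paper likewise invokes Theorem~1 of \cite{Bertsimas2016} to obtain a lifted reformulation over $\hat{\mathcal{P}}$ (in which the constraint $\bm\beta(\bm\rho,\hat\rho) + k\hat\rho\bm{1} \geq \bm\rho$ carries exactly the $k\hat\rho$ coupling you anticipate), then transfers the affine bijection $\bm\beta^{\rm s} \mapsto \bm\beta^{\rm s} \pm \bm\beta^{\rm i}\bm{1}^\top$ from the proof of Proposition~\ref{prop:lotsizing_robust}, and closes with Theorem~2 of \cite{Bertsimas2016} for the primal--lifted equivalence. The bookkeeping concern you raise does resolve as you expect, so your plan matches the paper's argument.
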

\begin{proof}{Proof.}
First, we can use \citet[Theorem~1]{Bertsimas2016} to argue~that Problem \eqref{opt:lotsizing_satisficing_primal} is equivalent to
\begin{equation*}
\begin{aligned}
	&\text{minimize} && k \\
	&\text{subject to} &&  \left(\hat\rho\bm{c}-\bm\rho\right)^\top \bm{x} + \bar{\bm{z}}^\top\bm\beta(\bm\rho,\hat\rho)   \leq  \hat\rho \tau && \forall (\bm{\rho},\hat\rho) \in \hat{\mathcal{P}} \\
	&&&\bm\beta(\bm\rho,\hat\rho) + k\hat\rho\bm{1} \geq \bm\rho && \forall (\bm{\rho},\hat\rho) \in \hat{\mathcal{P}} \\
	& && \bm\beta(\bm\rho,\hat\rho) \geq \bm{0} && \forall (\bm{\rho},\hat\rho) \in \hat{\mathcal{P}} \\ 
	& && \bm{x} \in \mathcal{X}, \; k \in \mathbb{R}_+, \; \bm\beta \in \mathcal{R}^{n+1,n}, 
\end{aligned}
\end{equation*}
where $\hat{\mathcal{P}}$ is the same as that in the proof of Proposition~\ref{prop:lotsizing_robust}. 
The proof largely follows that of  Proposition~\ref{prop:lotsizing_robust}, \emph{i.e.}, one can show that the affine adaptation approximation of the above problem is equivalent to that of~\eqref{opt:lotsizing_satisficing_dual}, which in turn implies that the affine adaptation approximations of~\eqref{opt:lotsizing_satisficing_primal} and~\eqref{opt:lotsizing_satisficing_dual} are equivalent \cite[Theorem~2]{Bertsimas2016}. Details are omitted for brevity. \qed
\end{proof}

Our goal here is to show that the robust satisficing approach (especially when it is approximately solved using the proposed affine dual adaptation from Section~\ref{sec:tractable_conic_formulations}) can be a relatively computationally friendly alternative to the classical robust optimization approach. We consider a network of a varying number of nodes $n \in \{ 10, 15, \hdots, 100 \}$ with $\mathcal{X} = \mathcal{Z} = [0,20]^n$. To ensure suitable variability among the nodes, the components of the initial ordering cost $\bm{c}$ and emergency cost $\bm{\ell}$ are generated uniformly from $[8,10]$ and $[18,20]$ respectively. We then select the node locations randomly from $[0,10]^2$ and accordingly compute the (Euclidean) distance matrix $\bm{T}$. Figure~\ref{fig:lotsizing} compares the  computational times required by Gurobi \& RSOME in logarithmic scale of the primal robust problem~\eqref{opt:lotsizing_robust_primal}, dual robust problem~\eqref{opt:lotsizing_robust_dual}, primal robust satisficing problem~\eqref{opt:lotsizing_satisficing_primal} and dual robust satisficing problem~\eqref{opt:lotsizing_satisficing_dual}. Using Gurobi 9.1.1 with RSOME (Robust Stochastic Optimization Made Easy) modeling language \citep{chen2020rsome} and Python $3.7.7$, the primal models~\eqref{opt:lotsizing_robust_primal} and~\eqref{opt:lotsizing_satisficing_primal} cannot be solved within a time limit of one hour (on an Intel Core i7 2.7GHz MacBook with 16GB of RAM) when the number of nodes exceed 80. When $n=100$, the dual robust model~\eqref{opt:lotsizing_robust_dual} takes about 15 minutes, whereas the dual robust satisficing model~\eqref{opt:lotsizing_satisficing_dual} takes only about 30 seconds. The better efficiency of the dual approaches was first hypothesized and observed in~\cite{Bertsimas2016}, whereas the better efficiency of the robust satisficing approach is, to our knowledge, established here for the first time.
\begin{figure}[h]
    \centering 
    \includegraphics[scale=0.65]{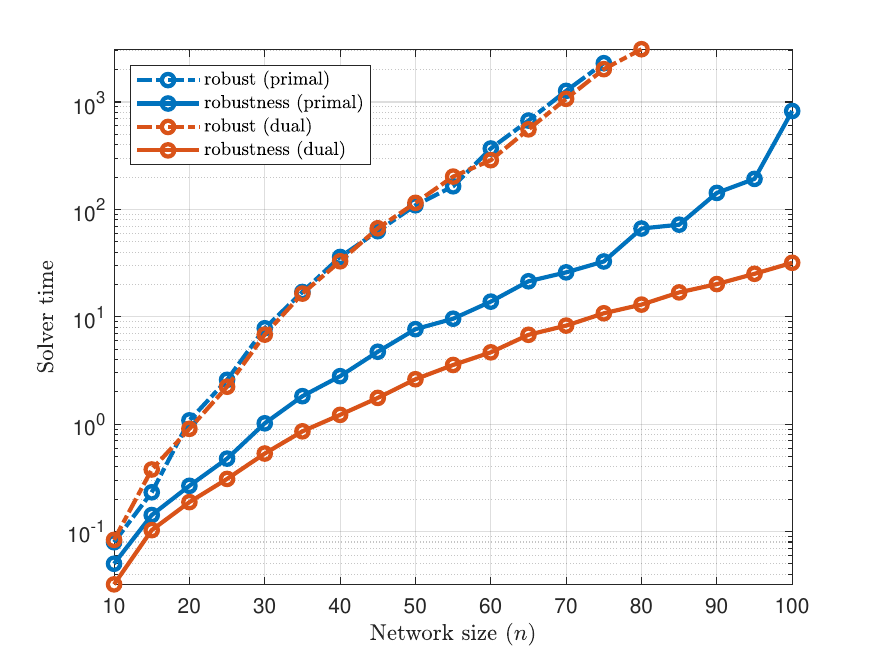}
    \caption{Computational times (in seconds) of the robust optimization (\texttt{rob opt}) and the robust satisficing (\texttt{rob sat}) lot-sizing solutions.}
    \label{fig:lotsizing}
\end{figure}

\end{APPENDICES}

\end{document}